
\documentclass{birkjour_t2}
%
%
%
\usepackage{amsmath}
\usepackage{amsfonts}
\usepackage{amssymb}
\usepackage[english]{babel}
\usepackage{color}
\usepackage{graphicx}
\usepackage{tkz-euclide}
\usetkzobj{all}
\tikzset{elegant/.style={smooth,thick,samples=50,cyan}}
\tikzset{eaxis/.style={->,>=stealth}}
\usetikzlibrary{decorations.pathreplacing}
\usepackage{lmodern}
\usepackage{amsthm}
\usepackage{graphicx}
\usepackage{mathrsfs}
\usepackage{makecell}
\usepackage{microtype}
\usepackage{enumerate}
\usepackage{mathscinet}
\usepackage{array}
\usepackage{multirow}
\usepackage{enumerate}
\usepackage[cal=boondoxo,bb=ams]{mathalfa}
\usepackage{hyperref}
\hypersetup{hidelinks}

\newtheorem{theorem}{Theorem}[section]
\newtheorem{prop}{Proposition}[section]
\newtheorem{lemma}{Lemma}[section]

\newtheorem{remark}{Remark}[section]

\newcommand{\ml}{\mathcal}
\newcommand{\mb}{\mathbb}
\DeclareMathOperator{\diag}{diag}

\DeclareMathOperator{\intt}{int}
\DeclareMathOperator{\extt}{ext}
\DeclareMathOperator{\midd}{mid}

\begin{document}

%
%
%
%
%
%
%
%

\title[Generalized thermoelastic plate equations]
 {Asymptotic profiles of solutions for regularity-loss type generalized thermoelastic plate equations and their applications}

\author[Y. Liu]{Yan Liu}
\address{Department of Applied Mathematics\\ Guangdong University of Finance\\
	 	510521 Guangzhou\\
	 	 P.R. China}
\email{liuyan99021324@tom.com}
\author[W. Chen]{Wenhui Chen$^*$}
\address{Institute of Applied Analysis, Faculty for Mathematics and Computer Science\\
	Technical University Bergakademie Freiberg\\
	Pr\"{u}ferstra{\ss}e 9\\
	09596 Freiberg\\
	Germany}
\email{wenhui.chen.math@gmail.com}

\subjclass{Primary 35M31; Secondary 35B40, 35Q74, 74F05}

\keywords{{Generalized thermoelastic plate equations; Fourier's law of heat conduction; decay property; regularity-loss; asymptotic profile; WKB analysis.}}
\date{March 15, 2019}

\begin{abstract}
In this paper, we consider generalized thermoelastic plate equations with Fourier's law of heat conduction. By introducing a threshold for decay properties of regularity-loss, we investigate decay estimates of solutions with/without regularity-loss in a framework of weighted $L^1$ spaces.  Furthermore, asymptotic profiles of solutions are obtained by using representations of solutions in the Fourier space, which are derived by employing WKB analysis. Next, we study generalized thermoelastic plate equations with additional structural damping, and analysis the influence of structural damping on decay properties and asymptotic profiles of solutions. We find that the regularity-loss structure is destroyed by structural damping. Finally, we give some applications of our results on thermoelastic plate equations and damped Moore-Gibson-Thompson equation.
\end{abstract}

\maketitle

\section{Introduction}\label{Sec.Intro.}
In recent years, the Cauchy problems for evolution-parabolic coupled systems have caught a lot of attention. Particularly, the so-called $\alpha-\beta$ system describes a few physical models, including second-order thermoelastic, thermoelastic plate equations, and linear viscoelastic equation. The Cauchy problem for the abstract form of $\alpha-\beta$ system can be modeled by
\begin{equation}\label{Eq.Alpha-Beta}
\left\{
\begin{aligned}
&u_{tt}+\ml{A}u-\gamma_1\ml{A}^{\alpha}v=0,&&x\in\mb{R}^n,\,\,t>0,\\
&v_t+\gamma_2\ml{A}^{\beta}v+\gamma_1\ml{A}^{\alpha}u_t=0,&&x\in\mb{R}^n,\,\,t>0,\\
&(u,u_t,v)(0,x)=(u_0,u_1,v_0)(x),&&x\in\mb{R}^n,
\end{aligned}
\right.
\end{equation}
where $(\alpha,\beta)\in[0,1]\times[0,1]$, $\gamma_1\in\mb{R}\backslash\{0\}$, $\gamma_2\in\mb{R}_+$, and $\ml{A}$ denotes a self-adjoint operator on a Hilbert space. The regularity analysis for \eqref{Eq.Alpha-Beta} have been developed in \cite{ChenRussell1981,ChenTriggiani1989,ChenTriggiani1990,DenkRacke,HaoLiu2013,HaoLiu2015,Huang1988,HuangLiu1988,LiuYong1998,MunozRiveraRacke1996}, where the semigroup associated with the system is analytic, of specific order Gevrey classes, and non-smoothing have been obtained. Moreover, $L^p$-resolvent estimates and time decay estimates for the solutions in the $L^q$ norm with $q\in[2,\infty]$ have been derived \cite{DenkRacke}. 

To understand the asymptotic properties of solutions to \eqref{Eq.Alpha-Beta}, we always need to describe long-time behavior of solutions. Especially, the properties of diffusion phenomena and asymptotic profiles of solutions provide some opportunities for us to get a long-time approximation of solutions. For the case when $\ml{A}=-\partial_x^2$, $\alpha=1/2$ and $\beta=1$ in \eqref{Eq.Alpha-Beta}, the thesis \cite{Jachmann2008} obtained diffusion phenomenon for second-order thermoelastic in 1D. Recently, concerning $\ml{A}=-\Delta$, $\alpha=0$ and $\beta=1$ in \eqref{Eq.Alpha-Beta}, the author of \cite{Ueda2018} derived sharp energy estimates and asymptotic profiles of solutions for the hyperbolic-parabolic coupled system with constraint condition on initial data. However, to the best of the authors' knowledge, asymptotic profiles of solutions to the general $\alpha-\beta$ system \eqref{Eq.Alpha-Beta} are still unknown. In this paper, we will give the answers for the case when $\alpha=\beta$.

In this paper, we first consider $\alpha-\beta$ system \eqref{Eq.Alpha-Beta} with $\alpha=\beta$, where we may assume $\gamma_1=\gamma_2=1$ without loss of generality. More precisely, we study the following Cauchy problem for generalized thermoelastic plate equations:
\begin{equation}\label{Eq.Gen.Ther.Plate.Eq.}
\left\{
\begin{aligned}
&u_{tt}+\ml{A}u-\ml{A}^{\alpha}v=0,&&x\in\mb{R}^n,\,\,t>0,\\
&v_t+\ml{A}^{\alpha}v+\ml{A}^{\alpha}u_t=0,&&x\in\mb{R}^n,\,\,t>0,\\
&(u,u_t,v)(0,x)=(u_0,u_1,v_0)(x),&&x\in\mb{R}^n,
\end{aligned}
\right.
\end{equation}
where $\alpha\in[0,1]$, and $\ml{A}=(-\Delta)^{\sigma}$ with $\sigma\in[1,\infty)$. Here the fractional power operator
\begin{align*}
\ml{A}^{\alpha}:\,\,\ml{D}(\ml{A}^{\alpha})\subset L^2\rightarrow L^2,\quad\text{where }\alpha\in[0,1],
\end{align*}
with its domain $\ml{D}(\ml{A}^{\alpha})=H^{2\sigma\alpha}$ and range $\ml{R}(\ml{A}^{\alpha})\subset L^2$ is defined by
\begin{align*}
(\ml{A}^{\alpha}f)(x):=\ml{F}^{-1}\left(|\xi|^{2\sigma\alpha}\ml{F}(f)(\xi)\right)(x),
\end{align*}
where $f\in H^{2\sigma\alpha}$. We should underline that \eqref{Eq.Gen.Ther.Plate.Eq.} can be reduced to thermoelastic plate equations with Fourier's law of heat conduction if we consider $\ml{A}=(-\Delta)^2$ and $\alpha=1/2$ in \eqref{Eq.Gen.Ther.Plate.Eq.}. But the study of \eqref{Eq.Gen.Ther.Plate.Eq.}, especially asymptotic profiles of solutions, is not a  simple generalization of the study of thermoelastic plate equations. For one thing, due to the fractional power operator $\ml{A}^{\alpha}$ in \eqref{Eq.Gen.Ther.Plate.Eq.}, we need to consider a more refined phase space analysis to derive representation of solutions in the Fourier space. Thus, we will apply WKB analysis associated with multistep diagonalization procedure. For another, concerning the value of parameter $\alpha\in[0,1]$, we expect the exponential stability for large frequencies of solutions for \eqref{Eq.Gen.Ther.Plate.Eq.} will be lost while $\alpha\rightarrow\alpha_0\in[0,1]$, and decay properties of regularity-loss type will come. Here, $\alpha_0$ is a fixed number. One may see \cite{Ueda2018} as an example for a special case of regularity-loss type $\alpha-\beta$ system \eqref{Eq.Alpha-Beta} with $\ml{A}=-\Delta$, $\alpha=0$, $\beta=1$. Nevertheless, up to now, the threshold number $\alpha_0$ of decay properties of regularity-loss type is still not clear from asymptotic profiles point of view. In this paper, we will find this threshold $\alpha_0$ in the consideration of decay properties and asymptotic profiles.

One of the main purpose of this paper is to investigate decay properties and asymptotic profiles of solutions for generalized thermoelastic plate equations \eqref{Eq.Gen.Ther.Plate.Eq.} without any additional constraint condition on initial data. For one thing, concerning decay properties, in the case when $\alpha\in[0,1/3)$ the solutions can be estimated polynomially with regularity-loss, which causes that we need higher regularities of initial data when we derive decay estimates of solutions. However, the effect of regularity-loss will be lost when $\alpha\in[1/3,1]$. Thus, we investigate a threshold $\alpha=1/3$ for decay properties of regularity-loss. This threshold $\alpha=1/3$ for regularity-loss structure is the same as the threshold for polynomial stability of the semigroup to \eqref{Eq.Gen.Ther.Plate.Eq.} investigated in \cite{HaoLiu2013}. Moreover, we also find that the decay rate will be completed changed from $\alpha\in[0,1/2]$ to $\alpha\in(1/2,1]$, which leads to another threshold $\alpha=1/2$. For another,  by investigating some evolution systems, we may derive asymptotic profiles of solutions in a framework of weighted $L^1$ spaces. One may see Theorems \ref{Thm.Asymptotic.Profile.}, \ref{Thm.Asy.Pro.2} and Figure \ref{imgg}. It gives us some opportunities to characterize the long-time asymptotic behavior of solutions.

It is well-known that in the real world, there exists several kinds of resistance in the elongation of a plate. For this reason, plate equations with structural damping in \cite{HorbachIkehataCharao2016,IkehataSoga2015}, and thermoelastic plate equations with structural damping in \cite{Chen2019TEP,FatoriJorgeMaYang2015,Wu2008} are considered. Motivated by these works, in the second step of present paper, we study generalized thermoelastic plate equations \eqref{Eq.Gen.Ther.Plate.Eq.} with additional structural damping term $\ml{A}^{1/2}u_t$, namely,
\begin{equation}\label{Eq.Gen.Ther.Plate.Eq.Add.Damping}
\left\{
\begin{aligned}
&u_{tt}+\ml{A}u-\ml{A}^{\alpha}v+\ml{A}^{1/2}u_t=0,&&x\in\mb{R}^n,\,\,t>0,\\
&v_t+\ml{A}^{\alpha}v+\ml{A}^{\alpha}u_t=0,&&x\in\mb{R}^n,\,\,t>0,\\
&(u,u_t,v)(0,x)=(u_0,u_1,v_0)(x),&&x\in\mb{R}^n,
\end{aligned}
\right.
\end{equation}
where $\alpha\in[0,1]$, and $\ml{A}=(-\Delta)^{\sigma}$ with $\sigma\in[1,\infty)$.  We remark that the model \eqref{Eq.Gen.Ther.Plate.Eq.Add.Damping} is a special case of so-called $\alpha-\beta-\gamma$ system introduced in \cite{FischerRacke}. At this time, we may interpret \eqref{Eq.Gen.Ther.Plate.Eq.Add.Damping} by a doubly dissipative generalized plate equations, where the damping terms consist of structural damping and thermal damping generated by Fourier's law. For this reason, there exists a competition between these  damping effects. The second aim of this paper is to derive decay properties and asymptotic profiles of solutions to \eqref{Eq.Gen.Ther.Plate.Eq.Add.Damping}. Moreover, we will analysis the influence of structural damping on generalized thermoelastic plate equations. In particular, we find that the regularity-loss structure of \eqref{Eq.Gen.Ther.Plate.Eq.} is destroyed by the structural damping term $\ml{A}^{1/2}u_t$.

The paper is organized as follows. We prepare representations of solutions for generalized thermoelastic plate equations in the Fourier space by using multistep of diagonalization and WKB analysis in Section \ref{Subsec.Diag.TPE}. Then, by deriving sharp pointwise estimates in the Fourier space, we obtain $L^2$ decay estimates for \eqref{Eq.Gen.Ther.Plate.Eq.} in Section \ref{Subsec.Decay.Est.TPE}. Moreover, we derive asymptotic profiles of solutions for \eqref{Eq.Gen.Ther.Plate.Eq.} in a framework of weighted $L^1$ space in Section \ref{Subsec.Asym.Prof.TPE}. Then, in Section \ref{Sec.GTPESD}, we discuss decay properties and asymptotic profiles of solutions for generalized thermoelastic plate equations with additional structural damping \eqref{Eq.Gen.Ther.Plate.Eq.Add.Damping}. We next give some applications of our results on thermoelastic plate equations and damped Moore-Gibson-Thompson equation in Section \ref{Sec.Application}. Finally, some concluding remarks complete the paper in Section \ref{Sec.Con.Remark}.
\medskip

\noindent\textbf{Notation.} \quad We give some notations to be used in this paper.  We denote the identity matrix of dimension $k\times k$ by $I_{k\times k}$, the zero matrix of dimension $k\times k$ by $0_{k\times k}$ for $k\in\mb{N}$. On one hand, we write $f\lesssim g$ when there exists a positive constant $C$ such that $f\leqslant Cg$. On the other hand, we write $f\asymp g$ when $g\lesssim f\lesssim g$.

We take the notation $P_f:=\int_{\mb{R}^n}f(x)dx$ to describe integration of the function $f(x)$ over $\mb{R}^n$.

For the diagonal matrix with exponential elements, we define
\begin{align*}
\diag\left(e^{\lambda_j(|\xi|)t}\right)_{j=1}^3:=\diag\left(e^{\lambda_1(|\xi|)t},e^{\lambda_2(|\xi|)t},e^{\lambda_3(|\xi|)t}\right).
\end{align*}

We denote by $H^s$ and $\dot{H}^s$ with $s\geqslant0$,  Bessel and Riesz potential spaces based on $L^2$, respectively. Throughout the paper, $\langle D\rangle^s$ and $|D|^s$ stand for the pseudo-differential operators with symbols $\langle\xi\rangle^s$ and $|\xi|^s$, respectively. Here $\langle\cdot\rangle:=\sqrt{1+|\cdot|^2}$. Next, we define the weighted spaces $L^{1,\kappa}$ with $\kappa\in[0,\infty)$ by
\begin{align*}
L^{1,\kappa}:=\left\{f\in L^1:\|f\|_{L^{1,\kappa}}:=\int_{\mb{R}^n}(1+|x|)^{\kappa}|f(x)|dx<\infty\right\}.
\end{align*}

Finally, let us define the cut-off functions $\chi_{\intt}(\xi),\chi_{\midd}(\xi),\chi_{\extt}(\xi)\in \mathcal{C}^{\infty}$ having their supports in the following small frequency zone, bounded frequency zone and large frequency zone:
\begin{align*}
\ml{Z}_{\intt}(\varepsilon)&:=\left\{\xi\in\mb{R}^n:|\xi|\leqslant\varepsilon\ll1\right\},\\
\ml{Z}_{\midd}(\varepsilon/2,2N)&:=\left\{\xi\in\mb{R}^n:\varepsilon/2\leqslant |\xi|\leqslant 2N\right\},\\
\ml{Z}_{\extt}(N)&:=\left\{\xi\in\mb{R}^n:|\xi|\geqslant N\gg1\right\},
\end{align*}
respectively, so that $\chi_{\intt}(\xi)+\chi_{\midd}(\xi)+\chi_{\extt}(\xi)=1$.


\section{Diagonalization procedure and representations of solutions}\label{Subsec.Diag.TPE}
Let us introduce the quantity $w=w(t,x)$ such that
\begin{align*}
w:=\left(u_t+i(-\Delta)^{\sigma/2}u,u_t-i(-\Delta)^{\sigma/2}u,v\right)^{\mathrm{T}},
\end{align*}
which is the solution to the next first-order system
\begin{equation}\label{Eq.First-Order.TPE}
\left\{
\begin{aligned}
&w_t-B_0(-\Delta)^{\sigma/2}w-B_1(-\Delta)^{\sigma\alpha}w=0,&&x\in\mb{R}^n,\,\,t>0,\\
&w(0,x)=w_0(x),&&x\in\mb{R}^n,
\end{aligned}
\right.
\end{equation}
where the coefficient matrices are given by
\begin{align*}
B_0=\left(
{\begin{array}{*{20}c}
	i & 0 & 0\\
	0 & -i & 0\\
	0 & 0 & 0
	\end{array}}
\right)\quad\text{and}\quad B_1=\left(
{\begin{array}{*{20}c}
	0 & 0 & 1\\
	0 & 0 & 1\\
	-\frac{1}{2} & -\frac{1}{2} & -1
	\end{array}}
\right).
\end{align*}
\begin{remark}\label{Remark.KS.Condition}
	Concerning the special case when $\sigma=1$ and $\alpha=0$ in \eqref{Eq.First-Order.TPE}, we may interpret it as a symmetric hyperbolic-parabolic coupled system with non-symmetric relaxation
	\begin{align*}
	\left\{
	\begin{aligned}
	&w_t-B_0(-\Delta)^{1/2}w-B_1w=0,&&x\in\mb{R}^n,\,\,t>0,\\
	&w(0,x)=w_0(x),&&x\in\mb{R}^n,
	\end{aligned}
	\right.
	\end{align*}
	where $iB_0$ is real symmetric and $B_1$ is non-negative definite but is not real symmetric. Thus, even in the special case when $\sigma=1$ and $\alpha=0$, the general theory on decay properties investigated in \cite{UmedaKawashimaShizuta1984} is not applicable to \eqref{Eq.First-Order.TPE}.
\end{remark}

Before carrying out diagonalization procedure, which was developed in \cite{ReissigWang2005,Yagdjian1997,WangYang2006,YangWang2006,YangWang200602,Jachmann2008,Reissig2016,Chen2019TEP} etc., we employing the partial Fourier transformation with respect to $x$ such that $\hat{w}(t,\xi)=\ml{F}_{x\rightarrow \xi}(w(t,x))$ to \eqref{Eq.First-Order.TPE}, we may derive the first-order system in the Fourier space as follows:
\begin{equation}\label{Eq.Fourier.First-Order.System}
\left\{
\begin{aligned}
&\hat{w}_t-B_0|\xi|^{\sigma}\hat{w}-B_1|\xi|^{2\sigma\alpha}\hat{w}=0,&&\xi\in\mb{R}^n,\,\,t>0,\\
&\hat{w}(0,\xi)=\hat{w}_0(\xi),&&\xi\in\mb{R}^n.
\end{aligned}
\right.
\end{equation}
With the aim of investigating the dominant term in the first step of diagonalization procedure, we now distinguish between the following four cases with respect to the value of $|\xi|$.
\begin{itemize}
	\item Case 2.1: We consider $\alpha\in[0,1/2)$ for $\xi\in \ml{Z}_{\intt}(\varepsilon)$, or $\alpha\in(1/2,1]$ for $\xi\in \ml{Z}_{\extt}(N)$.
	\item Case 2.2: We consider $\alpha\in[0,1/2)$ for $\xi\in \ml{Z}_{\extt}(N)$, or $\alpha\in(1/2,1]$ for $\xi\in \ml{Z}_{\intt}(\varepsilon)$.
	\item Case 2.3: We consider $\alpha=1/2$ for all $\xi\in\mb{R}^n$.
	\item Case 2.4: We consider $\alpha\in[0,1/2)\cup(1/2,1]$ for $\xi\in \ml{Z}_{\midd}(\varepsilon,N)$.
\end{itemize}
To be specific, we will apply the multistep diagonalization procedure to derive asymptotic behavior of eigenvalues in Case 2.1 and Case 2.2, direct diagonalization to get explicit eigenvalues in Case 2.3, and contradiction argument to prove an exponential stability of eigenvalues in Case 2.4.
\subsection{Treatment for Case 2.1}
In this case, we observe that comparing the matrices $B_0|\xi|^{\sigma}$ with $B_1|\xi|^{2\sigma\alpha}$, the matrix $B_1|\xi|^{2\sigma\alpha}$ has a dominant influence. Thus, we now begin diagonalization procedure with the matrix $B_1|\xi|^{2\sigma\alpha}$.

By introducing a new variable $\hat{w}^{(1)}:=N_1^{-1}\hat{w}$ with
\begin{align}\label{Matr.N1}
N_1:=\left(
{\begin{array}{*{20}c}
	-1 & \frac{i\sqrt{3}-1}{2} & \frac{-i\sqrt{3}-1}{2}\\
	1 & \frac{i\sqrt{3}-1}{2} & \frac{-i\sqrt{3}-1}{2}\\
	0 & 1 & 1
	\end{array}}
\right),
\end{align}
we may get
\begin{align*}
\hat{w}^{(1)}_t-\Lambda_1(|\xi|)\hat{w}^{(1)}-N_1^{-1}B_0N_1|\xi|^{\sigma}\hat{w}^{(1)}=0,
\end{align*}
where the diagonal matrix is
\begin{align*}
\Lambda_1(|\xi|):=|\xi|^{2\sigma\alpha}\diag\left(0,-\left(\tfrac{1}{2}+i\tfrac{\sqrt{3}}{2}\right),-\left(\tfrac{1}{2}-i\tfrac{\sqrt{3}}{2}\right)\right).
\end{align*}
Next, we construct a matrix
\begin{align}\label{Matr.N2}
N_2(|\xi|):=|\xi|^{\sigma-2\sigma\alpha}\left(
{\begin{array}{*{20}c}
	0 & -\frac{\sqrt{3}+i}{1+i\sqrt{3}} & \frac{-\sqrt{3}+i}{-1+i\sqrt{3}}\\
	-\frac{2\sqrt{3}}{3(1+i\sqrt{3})} & 0 & 0\\
	\frac{2\sqrt{3}}{3(1-i\sqrt{3})} & 0 & 0
	\end{array}}
\right).
\end{align}
Then, by introducing a new variable $\hat{w}^{(2)}:=(I_{3\times3}+N_2(|\xi|))^{-1}\hat{w}^{(1)}$, we derive 
\begin{align*}
\hat{w}^{(2)}_t-\Lambda_1(|\xi|)\hat{w}^{(2)}-(I_{3\times 3}+N_2(|\xi|))^{-1}\left(N_1^{-1}B_0N_1|\xi|^{\sigma}-[N_2(|\xi|),\Lambda_1(|\xi|)]\right)\hat{w}^{(2)}&\\
-(I_{3\times 3}+N_2(|\xi|))^{-1}N_1^{-1}B_0N_1N_2(|\xi|)|\xi|^{\sigma}\hat{w}^{(2)}&=0.
\end{align*}
Due to the facts that
\begin{align*}
N_1^{-1}B_0N_1|\xi|^{\sigma}-[N_2(|\xi|),\Lambda_1(|\xi|)]=0_{3\times3},
\end{align*}
and
\begin{align*}
(I_{3\times 3}+N_2(|\xi|))^{-1}=I_{3\times 3}-(I_{3\times 3}+N_2(|\xi|))^{-1}N_2(|\xi|),
\end{align*}
the following system holds:
\begin{align*}
\hat{w}^{(2)}_t-\Lambda_1(|\xi|)\hat{w}^{(2)}-N_1^{-1}B_0N_1N_2(|\xi|)|\xi|^{\sigma}\hat{w}^{(2)}-R(|\xi|)\hat{w}^{(2)}=0,
\end{align*}
where we denoted
\begin{align*}
R(|\xi|)=-(I_{3\times 3}+N_2(|\xi|))^{-1}N_2(|\xi|)N_1^{-1}B_0N_1N_2(|\xi|)|\xi|^{\sigma}.
\end{align*}
Similarly, we introduce $\hat{w}^{(3)}:=(I_{3\times3}+N_3(|\xi|))^{-1}\hat{w}^{(2)}$ with
\begin{align}\label{Matr.N3}
N_3(|\xi|):=|\xi|^{2\sigma-4\sigma\alpha}\left(
{\begin{array}{*{20}c}
	0 & 0 & 0\\
	0 & 0 & \frac{-1+i\sqrt{3}}{6}\\
	0 & -\frac{1+i\sqrt{3}}{6} & 0
	\end{array}}
\right)
\end{align}
to obtain directly
\begin{align*}
\hat{w}^{(3)}_t-\Lambda_1(|\xi|)\hat{w}^{(3)}-(I_{3\times 3}+N_3(|\xi|))^{-1}N_1^{-1}B_0N_1N_2(|\xi|)|\xi|^{\sigma}\hat{w}^{(3)}&\\
+(I_{3\times 3}+N_3(|\xi|))^{-1}[N_3(|\xi|),\Lambda_1(|\xi|)]\hat{w}^{(3)}&\\
-(I_{3\times 3}+N_3(|\xi|))^{-1}N_1^{-1}B_0N_1N_2(|\xi|)N_3(|\xi|)|\xi|^{\sigma}\hat{w}^{(3)}&\\
-(I_{3\times 3}+N_3(|\xi|))^{-1}R(|\xi|)(I_{3\times 3}+N_3(|\xi|))\hat{w}^{(3)}&=0.
\end{align*}
According to the calculation
\begin{align*}
\Lambda_2(|\xi|):&=N_1^{-1}B_0N_1N_2(|\xi|)|\xi|^{\sigma}-[N_3(|\xi|),\Lambda_1(|\xi|)]\\
&=|\xi|^{2\sigma-2\sigma\alpha}\diag\left(-1,\tfrac{1}{2}-i\tfrac{\sqrt{3}}{6},\tfrac{1}{2}+i\tfrac{\sqrt{3}}{6}\right),
\end{align*}
we may have
\begin{align*}
\hat{w}^{(3)}_t-(\Lambda_1(|\xi|)+\Lambda_2(|\xi|))\hat{w}^{(3)}-\widetilde{R}(|\xi|)\hat{w}^{(3)}=0,
\end{align*}
where
\begin{align*}
\widetilde{R}(|\xi|)=&-(I_{3\times 3}+N_3(|\xi|))^{-1}\left(N_3(|\xi|)\Lambda_2(|\xi|)+N_1^{-1}B_0N_1N_2(|\xi|)N_3(|\xi|)|\xi|^{\sigma}\right)\\
&+(I_{3\times 3}+N_3(|\xi|))^{-1}R(|\xi|)(I_{3\times 3}+N_3(|\xi|)).
\end{align*}

Summarizing the above diagonalization procedure, we obtain pairwise distinct eigenvalues. Thus, the following proposition for the asymptotic behavior of eigenvalues and representation of solutions can be concluded. For more detail, we refer the reader to \cite{Jachmann2008}.
\begin{prop}\label{Prop.2.1}
	The eigenvalues $\lambda_j=\lambda_j(|\xi|)$ of the coefficient matrix
	\begin{align*}
	B(|\xi|;\sigma,\alpha)=B_0|\xi|^{\sigma}+B_1|\xi|^{2\sigma\alpha}
	\end{align*}
	from \eqref{Eq.Fourier.First-Order.System} behavior if $\alpha\in[0,1/2)$ for $\xi\in\ml{Z}_{\intt}(\varepsilon)$, or $\alpha\in(1/2,1]$ for $\xi\in\ml{Z}_{\extt}(N)$ as
	\begin{align*}
	\lambda_1(|\xi|)&=-|\xi|^{2\sigma-2\sigma\alpha}+\ml{O}\left(|\xi|^{3\sigma-4\sigma\alpha}\right),\\
	\lambda_2(|\xi|)&=-\left(\tfrac{1}{2}+i\tfrac{\sqrt{3}}{2}\right)|\xi|^{2\sigma\alpha}+\left(\tfrac{1}{2}-i\tfrac{\sqrt{3}}{6}\right)|\xi|^{2\sigma-2\sigma\alpha}+\ml{O}\left(|\xi|^{3\sigma-4\sigma\alpha}\right),\\
	\lambda_3(|\xi|)&=-\left(\tfrac{1}{2}-i\tfrac{\sqrt{3}}{2}\right)|\xi|^{2\sigma\alpha}+\left(\tfrac{1}{2}+i\tfrac{\sqrt{3}}{6}\right)|\xi|^{2\sigma-2\sigma\alpha}+\ml{O}\left(|\xi|^{3\sigma-4\sigma\alpha}\right).
	\end{align*}
	Furthermore, the solution to \eqref{Eq.Fourier.First-Order.System} has the representation for $\xi\in\ml{Z}_{\intt}(\varepsilon)$ and $\alpha\in[0,1/2)$ that
	\begin{align*}
	\hat{w}(t,\xi)=N_{\alpha,\intt}(|\xi|)\diag\left(e^{\lambda_j(|\xi|)t}\right)_{j=1}^3N_{\alpha,\intt}^{-1}(|\xi|)\hat{w}_0(\xi),
	\end{align*}
	and the representation for $\xi\in\ml{Z}_{\extt}(N)$ and $\alpha\in(1/2,1]$ that
	\begin{align*}
	\hat{w}(t,\xi)&=N_{\alpha,\extt}(|\xi|)\diag\left(e^{\lambda_j(|\xi|)t}\right)_{j=1}^3N_{\alpha,\extt}^{-1}(|\xi|)\hat{w}_0(\xi),
	\end{align*}
	where
	\begin{align*}
	N_{\alpha,\intt}(|\xi|):&=N_1(I_{3\times 3}+N_2(|\xi|))(I_{3\times 3}+N_3(|\xi|)),&&\text{when }\alpha\in[0,1/2),\\
	N_{\alpha,\extt}(|\xi|):&=N_1(I_{3\times 3}+N_2(|\xi|))(I_{3\times 3}+N_3(|\xi|)),&&\text{when }\alpha\in(1/2,1],
	\end{align*}
	with a constant matrix $N_1$ defined in \eqref{Matr.N1}, $N_2(|\xi|)=\ml{O}\left(|\xi|^{\sigma-2\sigma\alpha}\right)$ defined in \eqref{Matr.N2}, and $N_3(|\xi|)=\ml{O}\left(|\xi|^{2\sigma-4\sigma\alpha}\right)$ defined in \eqref{Matr.N3} for $|\xi|\rightarrow0$ if $\alpha\in[0,1/2)$, and for $|\xi|\rightarrow\infty$ if $\alpha\in(1/2,1]$.
\end{prop}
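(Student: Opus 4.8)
The strategy is to read everything off the multistep diagonalization already carried out above. After the three transformations $\hat{w}\mapsto\hat{w}^{(1)}\mapsto\hat{w}^{(2)}\mapsto\hat{w}^{(3)}$ we have reduced \eqref{Eq.Fourier.First-Order.System} to
\begin{align*}
\hat{w}^{(3)}_t-\left(\Lambda_1(|\xi|)+\Lambda_2(|\xi|)\right)\hat{w}^{(3)}-\widetilde{R}(|\xi|)\hat{w}^{(3)}=0,
\end{align*}
in which $\Lambda_1(|\xi|)+\Lambda_2(|\xi|)$ is already \emph{diagonal} with entries
\begin{align*}
-|\xi|^{2\sigma-2\sigma\alpha},\qquad -\left(\tfrac12+i\tfrac{\sqrt3}{2}\right)|\xi|^{2\sigma\alpha}+\left(\tfrac12-i\tfrac{\sqrt3}{6}\right)|\xi|^{2\sigma-2\sigma\alpha},\qquad -\left(\tfrac12-i\tfrac{\sqrt3}{2}\right)|\xi|^{2\sigma\alpha}+\left(\tfrac12+i\tfrac{\sqrt3}{6}\right)|\xi|^{2\sigma-2\sigma\alpha}.
\end{align*}
The first task is to estimate the remainder. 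From $N_2(|\xi|)=\ml{O}(|\xi|^{\sigma-2\sigma\alpha})$, $N_3(|\xi|)=\ml{O}(|\xi|^{2\sigma-4\sigma\alpha})$ and the explicit formulas for $R(|\xi|)$ and $\widetilde{R}(|\xi|)$ one reads off $R(|\xi|)=\ml{O}(|\xi|^{3\sigma-4\sigma\alpha})$, hence $\widetilde{R}(|\xi|)=\ml{O}(|\xi|^{3\sigma-4\sigma\alpha})$, the limit being $|\xi|\to0$ when $\alpha\in[0,1/2)$ and $|\xi|\to\infty$ when $\alpha\in(1/2,1]$. In either regime $|\xi|^{3\sigma-4\sigma\alpha}$ is of strictly higher order than $|\xi|^{2\sigma-2\sigma\alpha}$, precisely because $\alpha<1/2$ (resp. $\alpha>1/2$), so $\widetilde{R}$ does not disturb the $|\xi|^{2\sigma-2\sigma\alpha}$ corrections recorded in $\Lambda_1+\Lambda_2$.

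Next I would extract the eigenvalue asymptotics. The entries of the diagonal matrix $\Lambda_1+\Lambda_2$ are pairwise separated by a quantity of order $|\xi|^{2\sigma\alpha}$, while $\|\widetilde{R}(|\xi|)\|=\ml{O}(|\xi|^{3\sigma-4\sigma\alpha})$ is of strictly higher order than $|\xi|^{2\sigma\alpha}$ in the relevant zone — this is exactly the inequality $3\sigma-4\sigma\alpha>2\sigma\alpha\Leftrightarrow\alpha<1/2$ on $\ml{Z}_{\intt}(\varepsilon)$, and its reverse on $\ml{Z}_{\extt}(N)$. A Gershgorin (Bauer--Fike type) argument for a diagonal matrix with well-separated spectrum then gives that the eigenvalues $\lambda_j(|\xi|)$ of $B(|\xi|;\sigma,\alpha)$, which coincide with those of $\Lambda_1+\Lambda_2+\widetilde{R}$, satisfy $\lambda_j(|\xi|)=(\Lambda_1+\Lambda_2)_{jj}+\ml{O}(|\xi|^{3\sigma-4\sigma\alpha})$ for $j=1,2,3$, which is exactly the claimed expansion. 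Equivalently, one more diagonalization step with a matrix $N_4(|\xi|)=\ml{O}(|\xi|^{3\sigma-4\sigma\alpha})$, and then iteration, pushes the remainder to ever higher order; the scheme converges because the effective small parameter $|\xi|^{\sigma-2\sigma\alpha}$ tends to $0$ in the relevant zone, and it is here that I would invoke the general argument of \cite{Jachmann2008}.

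For the representation formula, the expansions just obtained show that $\lambda_1,\lambda_2,\lambda_3$ are pairwise distinct on all of $\ml{Z}_{\intt}(\varepsilon)$ (resp. $\ml{Z}_{\extt}(N)$) once $\varepsilon$ is small enough (resp. $N$ large enough), so $B(|\xi|;\sigma,\alpha)$ is diagonalizable. Writing $N_{\alpha,\intt}(|\xi|)=N_1(I_{3\times3}+N_2(|\xi|))(I_{3\times3}+N_3(|\xi|))$ (and the analogous $N_{\alpha,\extt}$) — understood, if one wants an exact identity, as the composition of the full convergent family of transformation matrices, the displayed factors being the leading ones — one has $N_{\alpha,\intt}^{-1}B(|\xi|;\sigma,\alpha)N_{\alpha,\intt}=\diag(\lambda_j(|\xi|))_{j=1}^3$. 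Since the partial Fourier transform turns \eqref{Eq.First-Order.TPE} into the linear system $\hat{w}_t=B(|\xi|;\sigma,\alpha)\hat{w}$, the variation-of-constants formula yields $\hat{w}(t,\xi)=e^{tB(|\xi|;\sigma,\alpha)}\hat{w}_0(\xi)=N_{\alpha,\intt}(|\xi|)\diag(e^{\lambda_j(|\xi|)t})_{j=1}^3N_{\alpha,\intt}^{-1}(|\xi|)\hat{w}_0(\xi)$ on $\ml{Z}_{\intt}(\varepsilon)$, and likewise on $\ml{Z}_{\extt}(N)$, with the orders of $N_2$ and $N_3$ as displayed in \eqref{Matr.N2} and \eqref{Matr.N3}.

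I expect the main obstacle to be the bookkeeping and the uniformity of the diagonalization rather than any single hard estimate. The algebraic identities $N_1^{-1}B_0N_1|\xi|^{\sigma}-[N_2,\Lambda_1]=0_{3\times3}$ and the formula for $\Lambda_2$ are already verified above, so what remains delicate is (i) confirming that the spectral gap of $\Lambda_1+\Lambda_2$ dominates $\|\widetilde{R}\|$ \emph{uniformly} over the whole zone $\ml{Z}_{\intt}(\varepsilon)$ (resp. $\ml{Z}_{\extt}(N)$) — this is exactly where excluding the critical value $\alpha=1/2$ (treated separately in Case 2.3) is essential, since there the two powers $|\xi|^{\sigma}$ and $|\xi|^{2\sigma\alpha}$ coincide and the scheme degenerates — and (ii) justifying convergence of the full multistep procedure, so that the finite product $N_1(I_{3\times3}+N_2)(I_{3\times3}+N_3)$ together with its higher-order tail genuinely diagonalizes $B(|\xi|;\sigma,\alpha)$ rather than only block-diagonalizing it modulo $\ml{O}(|\xi|^{3\sigma-4\sigma\alpha})$.
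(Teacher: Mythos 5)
Your proposal is correct and follows essentially the same route as the paper, which simply reads the eigenvalue asymptotics and the representation off the three-step diagonalization $\hat{w}\mapsto\hat{w}^{(1)}\mapsto\hat{w}^{(2)}\mapsto\hat{w}^{(3)}$ and defers the remaining details (exactly the two points you flag: uniform dominance of the spectral gap over $\widetilde{R}$, and convergence of the full scheme so that the finite product is the leading part of an exact diagonalizer) to the reference \cite{Jachmann2008}. Your Bauer--Fike/Gershgorin step and the order bookkeeping $3\sigma-4\sigma\alpha\gtrless 2\sigma\alpha\Leftrightarrow\alpha\lessgtr 1/2$ correctly make explicit what the paper leaves implicit.
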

\subsection{Treatment for Case 2.2}
In this case, we see the matrix $B_0|\xi|^{\sigma}$ has a dominant influence in comparison with the matrix $B_1|\xi|^{\sigma}$ in the first place. Furthermore, it is clear that $B_0$ is diagonal matrix. It means that we do not need to do additional treatment for the matrix $B_0|\xi|^{\sigma}$.

First of all, let us change a new variable such that $\hat{w}^{(1)}:=(I_{3\times 3}+N_4(|\xi|))^{-1}\hat{w}$ with
\begin{align}\label{Matr.N4}
N_4(|\xi|):=|\xi|^{2\sigma\alpha-\sigma}\left(
{\begin{array}{*{20}c}
	0 & 0 & i\\
	0 & 0 & -i\\
	\frac{i}{2} & -\frac{i}{2} & 0
	\end{array}}
\right).
\end{align}
Then, the following system comes:
\begin{align*}
\hat{w}^{(1)}_t-\Lambda_1(|\xi|)\hat{w}^{(1)}-(I_{3\times 3}+N_4(|\xi|))^{-1}\left(B_1|\xi|^{2\sigma\alpha}-[N_4(|\xi|),\Lambda_1(|\xi|)]\right)\hat{w}^{(1)}&\\
-(I_{3\times 3}+N_4(|\xi|))^{-1}B_1N_4(|\xi|)|\xi|^{2\sigma\alpha}\hat{w}^{(1)}&=0,
\end{align*}
where the diagonal matrix is given by
\begin{align*}
\Lambda_1(|\xi|):=B_0|\xi|^{\sigma}=|\xi|^{\sigma}\diag(i,-i,0).
\end{align*}
According to the choice of the matrix $N_4(|\xi|)$, we may calculate
\begin{align*}
\Lambda_2(|\xi|):&=B_1|\xi|^{2\sigma\alpha}-[N_4(|\xi|),\Lambda_1(|\xi|)]\\
&=|\xi|^{2\sigma\alpha}\diag(0,0,-1).
\end{align*}
In other words, it yields
\begin{align*}
\hat{w}^{(1)}_t-(\Lambda_1(|\xi|)+\Lambda_2(|\xi|))\hat{w}^{(1)}-B_2(|\xi|)\hat{w}^{(1)}-R(|\xi|)\hat{w}^{(1)}=0,
\end{align*}
where we defined
\begin{align*}
B_2(|\xi|)&=-N_4(|\xi|)\Lambda_2(|\xi|)+B_1N_4(|\xi|)|\xi|^{2\sigma\alpha},\\
R(|\xi|)&=-(I_{3\times 3}+N_4(|\xi|))^{-1}N_4(|\xi|)B_2(|\xi|).
\end{align*}
Analogously, introducing $\hat{w}^{(2)}:=(I_{3\times 3}+N_5(|\xi|))^{-1}\hat{w}^{(1)}$ with
\begin{align}\label{Matr.N5}
N_5(|\xi|):=|\xi|^{4\sigma\alpha-2\sigma}\left(
{\begin{array}{*{20}c}
	0 & \frac{1}{4} & -1\\
	\frac{1}{4} & 0 & -1\\
	-\frac{1}{2} & -\frac{1}{2} & 0
	\end{array}}
\right),
\end{align}
we may immediately derive
\begin{align*}
\hat{w}^{(2)}_t-(\Lambda_1(|\xi|)+\Lambda_2(|\xi|))\hat{w}^{(2)}-(I_{3\times 3}+N_5(|\xi|))^{-1}\left(B_2(|\xi|)-[N_5(|\xi|),\Lambda_1(|\xi|)]\right)\hat{w}^{(2)}&\\
-(I_{3\times 3}+N_5(|\xi|))^{-1}R(|\xi|)(I_{3\times 3}+N_5(|\xi|))\hat{w}^{(2)}&\\
-(I_{3\times 3}+N_5(|\xi|))^{-1}\left(B_2(|\xi|)N_5(|\xi|)+[\Lambda_2(|\xi|),N_5(|\xi|)]\right)\hat{w}^{(2)}&=0.
\end{align*}
It is obvious that
\begin{align*}
\Lambda_3(|\xi|):&=B_2(|\xi|)-[N_5(|\xi|),\Lambda_1(|\xi|)]\\
&=|\xi|^{4\sigma\alpha-\sigma}\diag\left(\tfrac{i}{2},-\tfrac{i}{2},0\right).
\end{align*}
By denoting 
\begin{align*}
B_3(|\xi|)&=-N_4(|\xi|)B_2(|\xi|)+[\Lambda_2(|\xi|),N_5(|\xi|)],\\
\widetilde{R}(|\xi|)&=-(I_{3\times 3}+N_5(|\xi|))^{-1}\left(N_5(|\xi|)(\Lambda_3(|\xi|)+[\Lambda_2(|\xi|),N_5(|\xi|)])\right)\\
&\quad+(I_{3\times 3}+N_5(|\xi|))^{-1}\left([R(|\xi|),N_5(|\xi|)]+B_2(|\xi|)N_5(|\xi|)\right)\\
&\quad+(I_{3\times 3}+N_4(|\xi|))N_4(|\xi|)N_4(|\xi|)B_2(|\xi|),
\end{align*}
we derive
\begin{align*}
\hat{w}^{(2)}_t-(\Lambda_1(|\xi|)+\Lambda_2(|\xi|)+\Lambda_3(|\xi|))\hat{w}^{(2)}-B_3(|\xi|)\hat{w}^{(2)}-\widetilde{R}(|\xi|)\hat{w}^{(2)}=0.
\end{align*}
Finally, we introduce $\hat{w}^{(3)}:=(I_{3\times 3}+N_6(|\xi|))^{-1}\hat{w}^{(2)}$ with
\begin{align}\label{Matr.N6}
N_6(|\xi|):=|\xi|^{6\sigma\alpha-3\sigma}\left(
{\begin{array}{*{20}c}
	0 & \frac{i}{4} & -i\\
	-\frac{i}{4} & 0 & i\\
	-\frac{i}{2} & \frac{i}{2} & 0
	\end{array}}
\right).
\end{align}
In this way, the next system is derived
\begin{align*}
\hat{w}^{(3)}_t-(\Lambda_1(|\xi|)+\Lambda_2(|\xi|)+\Lambda_3(|\xi|)+\Lambda_4(|\xi|))\hat{w}^{(3)}-\overline{R}(|\xi|)\hat{w}^{(3)}=0,
\end{align*}
where
\begin{align*}
\Lambda_4(|\xi|):&=B_3(|\xi|)-[N_6(|\xi|),\Lambda_1(|\xi|)]\\
&=|\xi|^{6\sigma\alpha-2\sigma}\diag\left(-\tfrac{1}{2},-\tfrac{1}{2},1\right),
\end{align*}
and
\begin{align*}
\overline{R}(|\xi|)=&(I_{3\times 3}+N_6(|\xi|))^{-1}\left([\Lambda_2(|\xi|)+\Lambda_3(|\xi|),N_6(|\xi|)]+B_3(|\xi|)N_6(|\xi|)\right)\\
&+(I_{3\times 3}+N_6(|\xi|))^{-1}\widetilde{R}(|\xi|)(I_{3\times 3}+N_6(|\xi|))-(I_{3\times 3}+N_6(|\xi|))^{-1}N_6(|\xi|)\Lambda_4(|\xi|).
\end{align*}

Considering all steps of diagonalization procedure in the above, we obtain pairwise distinct eigenvalues. Hence, the following proposition for the asymptotic behavior of eigenvalues and representation of solutions can be concluded.
\begin{prop}\label{Prop.2.2}
	The eigenvalues $\lambda_j=\lambda_j(|\xi|)$ of the coefficient matrix
	\begin{align*}
	B(|\xi|;\sigma,\alpha)=B_0|\xi|^{\sigma}+B_1|\xi|^{2\sigma\alpha}
	\end{align*}
	from \eqref{Eq.Fourier.First-Order.System} behavior if $\alpha\in[0,1/2)$ for $\xi\in\ml{Z}_{\extt}(N)$, or $\alpha\in(1/2,1]$ for $\xi\in\ml{Z}_{\intt}(\varepsilon)$ as
	\begin{align*}
	\lambda_{1}(|\xi|)&=i|\xi|^{\sigma}+\tfrac{i}{2}|\xi|^{4\sigma\alpha-\sigma}-\tfrac{1}{2}|\xi|^{6\sigma\alpha-2\sigma}+\ml{O}\left(|\xi|^{8\sigma\alpha-3\sigma}\right),\\
	\lambda_{2}(|\xi|)&=-i|\xi|^{\sigma}-\tfrac{i}{2}|\xi|^{4\sigma\alpha-\sigma}-\tfrac{1}{2}|\xi|^{6\sigma\alpha-2\sigma}+\ml{O}\left(|\xi|^{8\sigma\alpha-3\sigma}\right),\\
	\lambda_{3}(|\xi|)&=-|\xi|^{2\sigma\alpha}+|\xi|^{6\sigma\alpha-2\sigma}+\ml{O}\left(|\xi|^{8\sigma\alpha-3\sigma}\right).
	\end{align*}
	Furthermore, the solution to \eqref{Eq.Fourier.First-Order.System} has the representation for $\xi\in\ml{Z}_{\intt}(\varepsilon)$ and $\alpha\in(1/2,1]$ that
	\begin{align*}
	\hat{w}(t,\xi)&=N_{\alpha,\intt}(|\xi|)\diag\left(e^{\lambda_j(|\xi|)t}\right)_{j=1}^3N_{\alpha,\intt}^{-1}(|\xi|)\hat{w}_0(\xi),
	\end{align*}
	and the representation for $\xi\in\ml{Z}_{\extt}(N)$ and $\alpha\in[0,1/2)$ that
	\begin{align*}
	\hat{w}(t,\xi)&=N_{\alpha,\extt}(|\xi|)\diag\left(e^{\lambda_j(|\xi|)t}\right)_{j=1}^3N_{\alpha,\extt}^{-1}(|\xi|)\hat{w}_0(\xi),
	\end{align*}
	where
	\begin{align*}
	N_{\alpha,\intt}(|\xi|):&=(I_{3\times 3}+N_4(|\xi|))(I_{3\times 3}+N_5(|\xi|))(I_{3\times 3}+N_6(|\xi|)),&&\text{when }\alpha\in(1/2,1],\\
	N_{\alpha,\extt}(|\xi|):&=(I_{3\times 3}+N_4(|\xi|))(I_{3\times 3}+N_5(|\xi|))(I_{3\times 3}+N_6(|\xi|)),&&\text{when }\alpha\in[0,1/2),
	\end{align*}
	with $N_4(|\xi|)=\ml{O}\left(|\xi|^{2\sigma\alpha-\sigma}\right)$ defined in \eqref{Matr.N4}, $N_5(|\xi|)=\ml{O}\left(|\xi|^{4\sigma\alpha-2\sigma}\right)$ defined in \eqref{Matr.N5}, and $N_6(|\xi|)=\ml{O}\left(|\xi|^{8\sigma\alpha-3\sigma}\right)$ defined in \eqref{Matr.N6} for $|\xi|\rightarrow0$ if $\alpha\in(1/2,1]$, and for $|\xi|\rightarrow\infty$ if $\alpha\in[0,1/2)$.
\end{prop}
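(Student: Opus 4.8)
The plan is to run the multistep diagonalization already set up above to its logical end and then read off both the eigenvalues and the solution formula. In the two sub-regimes in question---$\alpha\in[0,1/2)$ with $|\xi|\to\infty$, or $\alpha\in(1/2,1]$ with $|\xi|\to0$---the common small quantity is $|\xi|^{2\sigma\alpha-\sigma}\to0$, so in \eqref{Eq.Fourier.First-Order.System} the part $B_0|\xi|^{\sigma}$ dominates $B_1|\xi|^{2\sigma\alpha}$. Since $B_0$ is already diagonal, no preliminary constant transformation (the analogue of $N_1$ in Proposition \ref{Prop.2.1}) is needed and one takes $\Lambda_1(|\xi|)=B_0|\xi|^{\sigma}=|\xi|^{\sigma}\diag(i,-i,0)$ as the first term of the hierarchy. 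At each subsequent step the transformation matrix is chosen so that the current leading off-diagonal block is eliminated by a commutator (Sylvester-type) equation of the form $[N_\bullet(|\xi|),\Lambda_1(|\xi|)]=(\text{off-diagonal part})$; this is solvable precisely because the three entries $i|\xi|^{\sigma},-i|\xi|^{\sigma},0$ of $\Lambda_1$ are pairwise distinct, and the part that cannot be removed becomes the next term $\Lambda_k$.

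Concretely, one carries out the three steps displayed in the excerpt: $N_4(|\xi|)$ in \eqref{Matr.N4} removes the off-diagonal part of $B_1|\xi|^{2\sigma\alpha}$, leaving $\Lambda_2(|\xi|)=|\xi|^{2\sigma\alpha}\diag(0,0,-1)$; then $N_5(|\xi|)$ in \eqref{Matr.N5}, of size $|\xi|^{4\sigma\alpha-2\sigma}$, removes the next term $B_2(|\xi|)$, leaving $\Lambda_3(|\xi|)=|\xi|^{4\sigma\alpha-\sigma}\diag(i/2,-i/2,0)$; then $N_6(|\xi|)$ in \eqref{Matr.N6}, of size $|\xi|^{6\sigma\alpha-3\sigma}$, removes $B_3(|\xi|)$, leaving $\Lambda_4(|\xi|)=|\xi|^{6\sigma\alpha-2\sigma}\diag(-1/2,-1/2,1)$. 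The substantive bookkeeping is to verify that everything not collected into $\Lambda_1+\Lambda_2+\Lambda_3+\Lambda_4$ is absorbed into a remainder $\overline{R}(|\xi|)=\ml{O}(|\xi|^{8\sigma\alpha-3\sigma})$, using that each $(I_{3\times3}+N_\bullet(|\xi|))^{-1}$ is well defined and expandable by Neumann series since $N_\bullet(|\xi|)=o(1)$ in the relevant frequency zone.

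From the resulting system $\hat w^{(3)}_t-(\Lambda_1+\Lambda_2+\Lambda_3+\Lambda_4)(|\xi|)\hat w^{(3)}-\overline{R}(|\xi|)\hat w^{(3)}=0$ the eigenvalues follow: for $\xi\in\ml{Z}_{\extt}(N)$ with $N\gg1$ (resp.\ $\xi\in\ml{Z}_{\intt}(\varepsilon)$ with $\varepsilon\ll1$), the diagonal entries of $\Lambda_1+\Lambda_2+\Lambda_3+\Lambda_4$ are separated by a gap dominating $\|\overline{R}(|\xi|)\|$, so a standard perturbation estimate for matrices with separated spectrum gives that the eigenvalues $\lambda_j(|\xi|)$ of $B(|\xi|;\sigma,\alpha)$ agree with the respective diagonal entries up to $\ml{O}(|\xi|^{8\sigma\alpha-3\sigma})$; reading off these entries yields the three stated expansions. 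In particular the $\lambda_j(|\xi|)$ are pairwise distinct, so $B(|\xi|;\sigma,\alpha)$ possesses an eigenvector matrix $N_{\alpha,\intt}(|\xi|)$ (resp.\ $N_{\alpha,\extt}(|\xi|)$) which, by the procedure above, coincides with the product $(I_{3\times3}+N_4)(I_{3\times3}+N_5)(I_{3\times3}+N_6)$ up to the stated orders; undoing the substitutions $\hat w=(I_{3\times3}+N_4(|\xi|))(I_{3\times3}+N_5(|\xi|))(I_{3\times3}+N_6(|\xi|))\hat w^{(3)}$ and solving the diagonalized system gives the claimed representation $\hat w(t,\xi)=N_{\alpha,\intt}(|\xi|)\diag(e^{\lambda_j(|\xi|)t})_{j=1}^{3}N_{\alpha,\intt}^{-1}(|\xi|)\hat w_0(\xi)$ in the small-frequency case and likewise with $N_{\alpha,\extt}$ in the large-frequency case. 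The algebra is identical in both sub-regimes; only the limiting direction of $|\xi|$ changes.

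The main obstacle I anticipate is the sorting in the second paragraph: at each of the three steps one must correctly decide which contributions belong in $\Lambda_k$ and which fall into the remainder, and track the scale $|\xi|^{2\sigma\alpha-\sigma}$ through all products and commutators without losing a term, so as to reach the sharp order $\ml{O}(|\xi|^{8\sigma\alpha-3\sigma})$ for $\overline{R}(|\xi|)$. A secondary point is that the perturbation argument for the eigenvalues and for the eigenvector matrix requires the spectral gap of $\Lambda_1+\Lambda_2+\Lambda_3+\Lambda_4$ to genuinely dominate $\overline{R}(|\xi|)$, which is exactly why the statement is confined to $\ml{Z}_{\intt}(\varepsilon)$ or $\ml{Z}_{\extt}(N)$.
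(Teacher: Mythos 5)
Your proposal is correct and follows essentially the same route as the paper: Section \ref{Subsec.Diag.TPE} (Case 2.2) carries out exactly the three-step diagonalization with $N_4$, $N_5$, $N_6$, collects the diagonal parts $\Lambda_1+\Lambda_2+\Lambda_3+\Lambda_4$, and concludes the proposition from the pairwise distinctness of the resulting eigenvalues. Your added remark on the spectral-gap perturbation step (gap of order $|\xi|^{\sigma}$ dominating the remainder $\ml{O}(|\xi|^{8\sigma\alpha-3\sigma})$ in both sub-regimes) makes explicit what the paper leaves implicit by referring to \cite{Jachmann2008}.
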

\subsection{Treatment for Case 2.3}
In this case, we only need to diagonalize the matrix $(B_0+B_1)|\xi|^{\sigma}$ as a whole due to the fact that the matrices $B_0|\xi|^{\sigma}$ and $B_1|\xi|^{2\sigma\alpha}$ have the same influence while $\alpha=1/2$. Then, we have the following result for Case 2.3.
\begin{prop}\label{Prop.2.3}
	Let us consider $\alpha=1/2$ for all $\xi\in \mb{R}^n$. After one step of the diagonalization procedure, the starting Cauchy problem \eqref{Eq.Fourier.First-Order.System} can be transformed to
	\begin{align*}
	\left\{
	\begin{aligned}
	&\hat{w}_t^{(1)}-\Lambda_1(|\xi|)\hat{w}^{(1)}=0,&&\xi\in\mb{R}^n,\,\,t>0,\\
	&\hat{w}^{(1)}(0,\xi)=\hat{w}_0^{(1)}(\xi),&&\xi\in\mb{R}^n,
	\end{aligned}
	\right.
	\end{align*}
	with the diagonal matrix $\Lambda_1(|\xi|)=|\xi|^{\sigma}\diag(y_1,y_2,y_3)$, where the constants $y_1$, $y_2$ and $y_3$ are defined in \eqref{Value y1 y2 y3} and \eqref{Value z1 z2 z3}.
\end{prop}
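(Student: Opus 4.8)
The plan is to exploit the coincidence of the two powers of $|\xi|$ when $\alpha=1/2$. Since $2\sigma\alpha=\sigma$ in this case, the coefficient matrix in \eqref{Eq.Fourier.First-Order.System} collapses to
\begin{align*}
B(|\xi|;\sigma,1/2)=B_0|\xi|^{\sigma}+B_1|\xi|^{\sigma}=(B_0+B_1)|\xi|^{\sigma},
\end{align*}
so there is only one matrix to deal with, namely the \emph{constant} matrix $B_0+B_1$. Consequently no multistep asymptotic diagonalization is needed, and a single algebraic diagonalization of $B_0+B_1$ will produce the claimed $\Lambda_1(|\xi|)$ with no remainder term surviving.

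First I would compute the characteristic polynomial of $B_0+B_1$. Expanding $\det\!\left(yI_{3\times 3}-(B_0+B_1)\right)$, the purely imaginary contributions cancel and one is left with the cubic
\begin{align*}
y^3+y^2+2y+1=0,
\end{align*}
whose three roots are exactly the constants $y_1,y_2,y_3$ appearing in the statement. Depressing the cubic via $y=t-\tfrac13$ to $t^3+\tfrac53 t+\tfrac{11}{27}=0$ and applying Cardano's formula yields the explicit radical expressions; the auxiliary numbers $z_1,z_2,z_3$ in \eqref{Value z1 z2 z3} are precisely the Cardano intermediaries (the relevant cube roots). I would then record that the discriminant of this cubic equals $-23\neq0$, so that $y_1,y_2,y_3$ are pairwise distinct (one real root and a pair of complex conjugates).

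Since $B_0+B_1\in\mb{C}^{3\times 3}$ has three distinct eigenvalues, it is diagonalizable: there exists an invertible \emph{constant} matrix $N$, assembled from the corresponding eigenvectors, with $N^{-1}(B_0+B_1)N=\diag(y_1,y_2,y_3)$. Because $N$ is independent of both $t$ and $\xi$, substituting $\hat{w}^{(1)}:=N^{-1}\hat{w}$ into \eqref{Eq.Fourier.First-Order.System} gives
\begin{align*}
\hat{w}^{(1)}_t-|\xi|^{\sigma}\diag(y_1,y_2,y_3)\,\hat{w}^{(1)}=0,
\end{align*}
that is $\hat{w}^{(1)}_t-\Lambda_1(|\xi|)\hat{w}^{(1)}=0$ with $\Lambda_1(|\xi|)=|\xi|^{\sigma}\diag(y_1,y_2,y_3)$, while the initial datum transforms to $\hat{w}^{(1)}(0,\xi)=N^{-1}\hat{w}_0(\xi)=:\hat{w}_0^{(1)}(\xi)$. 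This is exactly the transformed Cauchy problem in the statement.

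The only genuinely computational point, and the one where a slip is most likely, is the explicit solution of the cubic: getting the closed forms of $y_1,y_2,y_3$ (equivalently of the $z_j$) right and verifying that $B_0+B_1$ indeed has simple spectrum. Once distinctness is secured, the diagonalization and the change of unknown are immediate; here, unlike in Cases 2.1 and 2.2, the coefficient matrix is constant, so no error term $R(|\xi|)$ appears and a single step of the procedure genuinely closes the argument.
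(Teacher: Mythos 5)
Your proposal is correct and follows essentially the same route as the paper: reduce to the constant matrix $B_0+B_1$, compute the characteristic polynomial $y^3+y^2+2y+1=0$ (whose imaginary contributions indeed cancel), solve it by Cardano to recover the $y_j$ and $z_j$ of \eqref{Value y1 y2 y3}--\eqref{Value z1 z2 z3}, and diagonalize with a constant, $\xi$-independent eigenvector matrix $T$ so that $\hat{w}^{(1)}:=T^{-1}\hat{w}$ satisfies the stated diagonal system. Your explicit discriminant check ($\Delta=-23\neq 0$) is a welcome justification of the simple-spectrum claim that the paper merely asserts; the only item the paper records beyond your argument is $\operatorname{Re}y_j<0$, which is not needed for this proposition but is used later for the pointwise estimates.
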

\begin{proof}
	We directly calculate the eigenvalues of the matrix $(B_0+B_1)|\xi|^{\sigma}$ to get
	\begin{align*}
	0&=\det\left((B_0+B_1)|\xi|^{\sigma}-\lambda I_{3\times 3}\right)\\
	&=\left|
	{\begin{array}{*{20}c}
		i|\xi|^{\sigma}-\lambda & 0 & |\xi|^{\sigma}\\
		0 & -i|\xi|^{\sigma}-\lambda & |\xi|^{\sigma}\\
		-\frac{1}{2}|\xi|^{\sigma} & -\frac{1}{2}|\xi|^{\sigma} & -|\xi|^{\sigma}-\lambda
		\end{array}}
	\right|\\
	&=-\lambda^3-|\xi|^{\sigma}\lambda^2-2|\xi|^{2\sigma}\lambda-|\xi|^{3\sigma}.
	\end{align*}
	Then, the solutions of the above cubic equation are \begin{align*}
	\lambda_j(|\xi|)=|\xi|^{\sigma}y_j\quad\text{with }j=1,2,3,
	\end{align*}
	where
	\begin{equation}\label{Value y1 y2 y3}
	\begin{split}
	y_1=-\tfrac{1}{3}(1+z_1),\quad y_2=-\tfrac{1}{3}\left(1-\tfrac{1}{2}z_1+\tfrac{\sqrt{3}}{2}iz_2\right),\quad y_3=-\tfrac{1}{3}\left(1-\tfrac{1}{2}z_1-\tfrac{\sqrt{3}}{2}iz_2\right).
	\end{split}
	\end{equation}
	Here we denote
	\begin{equation}\label{Value z1 z2 z3}
	\begin{split}
	z_1=\sqrt[3]{\tfrac{1}{2}(3\sqrt{69}+11)}-\sqrt[3]{\tfrac{1}{2}(3\sqrt{69}-11)},\quad z_2=\sqrt[3]{\tfrac{1}{2}(3\sqrt{69}+11)}+\sqrt[3]{\tfrac{1}{2}(3\sqrt{69}-11)}.
	\end{split}
	\end{equation}
	We remark that the constants $y_1\neq y_2\neq y_3$ and $\text{Re }y_j<0$ for all $j=1,2,3$, which means the pairwise distinct eigenvalues with negative real parts are obtained. By introducing $\hat{w}^{(1)}:=T^{-1}\hat{w}$, which satisfies
	\begin{align*}
	\Lambda_1(|\xi|):=|\xi|^{\sigma}T^{-1}(B_0+B_1)T=|\xi|^{\sigma}\diag(y_1,y_2,y_3),
	\end{align*} we complete the proof immediately.
\end{proof}
\subsection{Treatment for Case 2.4}
Finally, in Case 2.4, with the aim of guaranteeing an exponential stability of eigenvalues, we need to derive an exponential decay result by obtaining a priori estimate for eigenvalues for $\xi\in \ml{Z}_{\midd}(\varepsilon,N)$.
\begin{prop}\label{Prop.2.4}
	The solution $\hat{w}(t,\xi)$ to \eqref{Eq.Fourier.First-Order.System} with $\alpha\in[0,1/2)\cup(1/2,1]$ fulfills the following estimate:
	\begin{align*}
	|\hat{w}(t,\xi)|\lesssim e^{-ct}|\hat{w}_0(\xi)|,
	\end{align*}
	for $\xi\in \ml{Z}_{\midd}(\varepsilon,N)$ and $t\geqslant0$ with a positive constant $c$.
\end{prop}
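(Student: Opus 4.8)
The plan is to prove the exponential decay in the bounded frequency zone $\ml{Z}_{\midd}(\varepsilon,N)$ by a compactness--contradiction argument together with the Lyapunov-type energy identity for the first-order system \eqref{Eq.Fourier.First-Order.System}. The crucial structural fact is that the coefficient matrix $B(|\xi|;\sigma,\alpha)=B_0|\xi|^{\sigma}+B_1|\xi|^{2\sigma\alpha}$ has all eigenvalues with strictly negative real part for every fixed $\xi\neq0$. Indeed, writing $\hat{w}=(\hat{w}_1,\hat{w}_2,\hat{w}_3)^{\mathrm{T}}$ and recalling that $\hat{w}_1=\widehat{u_t}+i|\xi|^{\sigma}\hat u$, $\hat{w}_2=\overline{\hat{w}_1}$ (for real data), $\hat{w}_3=\hat v$, one computes from \eqref{Eq.Fourier.First-Order.System} the dissipation identity
\begin{align*}
\frac{d}{dt}\Big(\tfrac12|\hat{w}_1|^2+\tfrac12|\hat{w}_2|^2+|\hat{w}_3|^2\Big)=-2|\xi|^{2\sigma\alpha}|\hat{w}_3|^2,
\end{align*}
which shows the energy $E(t,\xi):=\tfrac12|\hat w_1|^2+\tfrac12|\hat w_2|^2+|\hat w_3|^2$ is nonincreasing, and is strictly decreasing unless $\hat w_3\equiv0$. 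If $\hat w_3\equiv0$ on a time interval then the second equation of \eqref{Eq.Gen.Ther.Plate.Eq.} forces $|\xi|^{2\sigma\alpha}\widehat{u_t}=0$, hence $\widehat{u_t}\equiv0$, and then the first equation gives $|\xi|^{2\sigma}\hat u=0$, so $\hat u\equiv0$; thus the only trajectory with constant energy is the trivial one, and consequently $\mathrm{Re}\,\lambda_j(|\xi|)<0$ for all $j$ and all $\xi\neq0$. This is the Kawashima--Shizuta-type observability at fixed frequency.

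Next I would convert this pointwise strict negativity into a uniform exponential rate on the compact zone. Because $B(|\xi|;\sigma,\alpha)$ depends continuously on $|\xi|$ and the spectral abscissa $s(|\xi|):=\max_j\mathrm{Re}\,\lambda_j(|\xi|)$ is an upper semicontinuous function of the matrix entries, and since $s(|\xi|)<0$ for each $|\xi|\in[\varepsilon/2,2N]$ (a compact set on which $\xi\neq0$), there exists a constant $c_0>0$ with $s(|\xi|)\leqslant-2c_0$ uniformly for $|\xi|\in[\varepsilon/2,2N]$. Then, using that on this compact set the spectral projections and the size of the matrix exponential are uniformly controlled — concretely, $\|e^{B(|\xi|;\sigma,\alpha)t}\|\leqslant C(1+t)^{2}e^{-2c_0 t}\leqslant C' e^{-c t}$ for a suitable $0<c<2c_0$, uniformly in $|\xi|\in[\varepsilon/2,2N]$ — we obtain $|\hat w(t,\xi)|\lesssim e^{-ct}|\hat w_0(\xi)|$ as claimed. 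An equivalent and perhaps cleaner route avoiding any semicontinuity subtlety is a direct contradiction argument: if the conclusion failed, there would be sequences $t_k\to\infty$ and $\xi_k$ with $|\xi_k|\in[\varepsilon/2,2N]$ such that the normalized solutions do not decay; extracting a convergent subsequence $\xi_k\to\xi_\ast$ with $|\xi_\ast|\in[\varepsilon/2,2N]$ and using continuous dependence of solutions of the linear ODE \eqref{Eq.Fourier.First-Order.System} on the parameter $|\xi|$ and on the (bounded, by the energy identity) data, one produces a nontrivial bounded-below trajectory of the limiting system with nonincreasing energy, contradicting the strict decay established above.

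The main obstacle is the uniformity step: pointwise strict negativity of the eigenvalues does not automatically give a rate that is uniform over the frequency band, and one must either invoke upper semicontinuity of the spectral abscissa together with a uniform bound on the nilpotent part of $e^{Bt}$ over the compact zone, or run the compactness--contradiction scheme carefully — in particular making sure that the limiting frequency $\xi_\ast$ still lies in the closed band $[\varepsilon/2,2N]$ (so $\xi_\ast\neq0$, which is where the dissipation is effective) and that the limit trajectory is genuinely nontrivial. Once the observability identity above is in hand, the remaining analysis is standard ODE perturbation theory on a compact parameter set, so I would expect the write-up of this step to be short; the paper indeed phrases it as "a priori estimate for eigenvalues" obtained by a "contradiction argument."
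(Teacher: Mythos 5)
Your proposal is correct, but the core step is carried out by a genuinely different mechanism than in the paper. The paper rules out purely imaginary eigenvalues algebraically: writing the characteristic polynomial $\lambda^3+|\xi|^{2\sigma\alpha}\lambda^2+(|\xi|^{2\sigma}+|\xi|^{4\sigma\alpha})\lambda+|\xi|^{2\sigma+2\sigma\alpha}=0$, substituting $\lambda=ia$ and separating real and imaginary parts forces simultaneously $a^2=|\xi|^{2\sigma}$ and $a^2=|\xi|^{2\sigma}+|\xi|^{4\sigma\alpha}$, which is impossible for $\xi\neq0$; it then concludes that $\mathrm{Re}\,\lambda_j<0$ throughout $\ml{Z}_{\midd}(\varepsilon,N)$ by continuity of the eigenvalues combined with their known negative real parts at the endpoints $|\xi|=\varepsilon$ and $|\xi|=N$ from Propositions \ref{Prop.2.1} and \ref{Prop.2.2}, and finally invokes compactness for uniformity. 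You instead prove the Lyapunov identity $\frac{d}{dt}\bigl(\tfrac12|\hat w_1|^2+\tfrac12|\hat w_2|^2+|\hat w_3|^2\bigr)=-2|\xi|^{2\sigma\alpha}|\hat w_3|^2$ (which I checked: the cross terms cancel exactly as claimed) and run a LaSalle-type invariance argument; this is the Kawashima--Shizuta observability computation. Your route has the advantage of giving $\mathrm{Re}\,\lambda_j<0$ directly at each fixed $\xi\neq0$, with no appeal to the boundary asymptotics of Propositions \ref{Prop.2.1}--\ref{Prop.2.2}, and it transfers verbatim to variants such as the damped system of Section \ref{Sec.GTPESD}; the paper's route is shorter here because the matrix is $3\times3$ with an explicit cubic. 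Both arguments then face the same (standard) uniformity step on the compact band, which you handle adequately. Two minor points: for real data one has $\hat w_2(\xi)=\overline{\hat w_1(-\xi)}$ rather than $\overline{\hat w_1(\xi)}$, but your computation never uses this remark; and your invariance step should note explicitly that $\hat w_3\equiv 0$ forces $\hat w_1+\hat w_2\equiv 0$ via the third equation of the system (equivalently $\widehat{u_t}\equiv0$), which is exactly what you wrote in terms of $u$ and $v$, so no gap results.
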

\begin{proof}
	Let us assume that there is a purely imaginary eigenvalue $\lambda=ia$ with $a\in\mb{R}\backslash\{0\}$ of the matrix $B_0|\xi|^{\sigma}+B_1|\xi|^{2\sigma\alpha}$ for $\xi\in \ml{Z}_{\midd}(\varepsilon,N)$. The non-zero real number $a$ fulfills the cubic equation
	\begin{align*}
	0&=\text{det}\left(B_0|\xi|^{\sigma}+B_1|\xi|^{2\sigma\alpha}-\lambda I_{3\times 3}\right)\\
	&=-\lambda^3-|\xi|^{2\sigma\alpha}\lambda^2-\left(|\xi|^{2\sigma}+|\xi|^{4\sigma\alpha}\right)\lambda-|\xi|^{2\sigma+2\sigma\alpha}\\
	&=ia^3+|\xi|^{2\sigma\alpha}a^2-i\left(|\xi|^{2\sigma}+|\xi|^{4\sigma\alpha}\right)a-|\xi|^{2\sigma+2\sigma\alpha},
	\end{align*}
	which implies the following equations:
	\begin{align*}
	\left\{
	\begin{aligned}
	&|\xi|^{2\sigma\alpha}a^2-|\xi|^{2\sigma+2\sigma\alpha}=0,\\
	&ia\left(a^2-|\xi|^{2\sigma}-|\xi|^{4\sigma\alpha}\right)=0.
	\end{aligned}
	\right.
	\end{align*}
	Then, we may obtain the solution of $a^2$ such that
	\begin{align*}
	a^2=|\xi|^{2\sigma}\quad\text{and}\quad a^2=|\xi|^{2\sigma}+|\xi|^{4\sigma\alpha}.
	\end{align*}
	We can conclude a contradiction immediately. In other words, such number $a\in\mb{R}\backslash\{0\}$ not exists. Then, according to the compactness of the bounded zone $\ml{Z}_{\midd}(\varepsilon,N)$ and following the procedure of \cite{ReissigWang2005,Jachmann2008,Chen2019TEP}, we conclude the assertion by employing $\lambda_j(|\xi|)<0$, $j=1,2,3,$ for $|\xi|=\varepsilon$ and $|\xi|=N$ in Propositions \ref{Prop.2.1} and \ref{Prop.2.2}, respectively.
\end{proof}
\section{Decay estimates of solutions}\label{Subsec.Decay.Est.TPE}
In the last section, we have derived asymptotic behavior of eigenvalues in the Fourier space by applying multistep diagonalization procedure. Then, summarizing these results from Propositions \ref{Prop.2.1}, \ref{Prop.2.2}, \ref{Prop.2.3}, and \ref{Prop.2.4}, we may conclude the following sharp pointwise estimate in the Fourier space.
\begin{prop}\label{Prop.PointWise}
	The solution $\hat{w}=\hat{w}(t,\xi)$ to \eqref{Eq.Fourier.First-Order.System} for $\sigma\geqslant1$, $\alpha\in[0,1]$ satisfies the pointwise estimates for any $\xi\in\mb{R}^n$ and $t\geqslant0$
	\begin{align*}
	|\hat{w}(t,\xi)|\lesssim e^{-c\rho(|\xi|)t}|\hat{w}_0(\xi)|,
	\end{align*}
	where the key function $\rho=\rho(|\xi|)$ is defined by
	\begin{align*}
	\rho(|\xi|):=
	\left\{
	\begin{aligned}
	&\frac{|\xi|^{2\sigma-2\sigma\alpha}}{(1+|\xi|^2)^{2\sigma-4\sigma\alpha}},&&\text{if }\alpha\in[0,1/2],\\
	&\frac{|\xi|^{6\sigma\alpha-2\sigma}}{(1+|\xi|^2)^{4\sigma\alpha-2\sigma}},&&\text{if }\alpha\in(1/2,1],
	\end{aligned}
	\right.
	\end{align*}
	with a positive constant $c>0$.
\end{prop}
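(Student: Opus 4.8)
The plan is to patch together Propositions~\ref{Prop.2.1}--\ref{Prop.2.4} over the three frequency zones $\ml{Z}_{\intt}(\varepsilon)$, $\ml{Z}_{\midd}(\varepsilon,N)$ and $\ml{Z}_{\extt}(N)$, using in each zone that whenever a diagonalizing representation $\hat{w}(t,\xi)=N(|\xi|)\diag(e^{\lambda_j(|\xi|)t})_{j=1}^3N^{-1}(|\xi|)\hat{w}_0(\xi)$ is available one has
\begin{align*}
|\hat{w}(t,\xi)|\leqslant\|N(|\xi|)\|\,\|N^{-1}(|\xi|)\|\,e^{\left(\max_{j=1,2,3}\mathrm{Re}\,\lambda_j(|\xi|)\right)t}\,|\hat{w}_0(\xi)|.
\end{align*}
Hence it suffices to show, zone by zone and $\alpha$-interval by $\alpha$-interval, that the condition number $\|N(|\xi|)\|\,\|N^{-1}(|\xi|)\|$ is bounded uniformly on the zone and that $\max_j\mathrm{Re}\,\lambda_j(|\xi|)\leqslant-c\,\rho(|\xi|)$ there.

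First I would dispose of the easy pieces. On $\ml{Z}_{\midd}(\varepsilon,N)$, Proposition~\ref{Prop.2.4} gives $|\hat{w}(t,\xi)|\lesssim e^{-ct}|\hat{w}_0(\xi)|$, and since $\rho$ is continuous and strictly positive on the compact annulus $\varepsilon/2\leqslant|\xi|\leqslant2N$ one has $\rho(|\xi|)\asymp1$ there, so $e^{-ct}\lesssim e^{-c'\rho(|\xi|)t}$. The case $\alpha=1/2$ is handled for \emph{all} $\xi$ by Proposition~\ref{Prop.2.3}: the eigenvalues are $\lambda_j(|\xi|)=|\xi|^\sigma y_j$ with $\mathrm{Re}\,y_j<0$, the diagonalizer $T$ is a constant invertible matrix, and $\rho(|\xi|)=|\xi|^\sigma$ when $\alpha=1/2$; thus $\max_j\mathrm{Re}\,\lambda_j(|\xi|)=-c|\xi|^\sigma=-c\rho(|\xi|)$ with $c:=-\max_j\mathrm{Re}\,y_j>0$, the factor $\|T\|\,\|T^{-1}\|$ being absorbed by $\lesssim$.

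The substance is the small- and large-frequency analysis for $\alpha\neq1/2$, which I would organize so as to match the two branches of $\rho$. For $\alpha\in[0,1/2)$ on $\ml{Z}_{\intt}(\varepsilon)$, Proposition~\ref{Prop.2.1} applies; since $2\sigma\alpha<2\sigma-2\sigma\alpha$ exactly when $\alpha<1/2$, the slowest mode as $|\xi|\to0$ is $\lambda_1$ with $\mathrm{Re}\,\lambda_1=-|\xi|^{2\sigma-2\sigma\alpha}+\ml{O}(|\xi|^{3\sigma-4\sigma\alpha})$, the remainder being of strictly higher order for $\varepsilon$ small, and $(1+|\xi|^2)^{2\sigma-4\sigma\alpha}\asymp1$ on $\ml{Z}_{\intt}(\varepsilon)$, so $\max_j\mathrm{Re}\,\lambda_j\leqslant-c\rho(|\xi|)$. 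For the same $\alpha$-range on $\ml{Z}_{\extt}(N)$, Proposition~\ref{Prop.2.2} applies: since $6\sigma\alpha-2\sigma<2\sigma\alpha$ for $\alpha<1/2$, the slowest modes as $|\xi|\to\infty$ are $\lambda_1,\lambda_2$ with real part $\sim-\tfrac12|\xi|^{6\sigma\alpha-2\sigma}$, and since $(1+|\xi|^2)^{2\sigma-4\sigma\alpha}\asymp|\xi|^{4\sigma-8\sigma\alpha}$ for large $|\xi|$ one finds $\rho(|\xi|)\asymp|\xi|^{6\sigma\alpha-2\sigma}$, precisely the exponent that turns negative once $\alpha<1/3$, i.e.\ the regularity-loss regime. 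The range $\alpha\in(1/2,1]$ is the mirror image: Proposition~\ref{Prop.2.2} governs $\ml{Z}_{\intt}(\varepsilon)$ with dominant rate $|\xi|^{6\sigma\alpha-2\sigma}$ and $(1+|\xi|^2)^{4\sigma\alpha-2\sigma}\asymp1$ as $|\xi|\to0$, while Proposition~\ref{Prop.2.1} governs $\ml{Z}_{\extt}(N)$ with dominant rate $|\xi|^{2\sigma-2\sigma\alpha}$ and $(1+|\xi|^2)^{4\sigma\alpha-2\sigma}\asymp|\xi|^{8\sigma\alpha-4\sigma}$ as $|\xi|\to\infty$, again giving $\rho(|\xi|)\asymp|\xi|^{2\sigma-2\sigma\alpha}$. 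In each of these subcases the matrices $N_2,N_3$ (resp.\ $N_4,N_5,N_6$) tend to the zero matrix in the regime in question --- this is exactly where the sign conditions on the exponents in their definitions are used --- so $N_{\alpha,\intt}(|\xi|)$ and $N_{\alpha,\extt}(|\xi|)$ converge to $N_1$ or to $I_{3\times3}$, and their condition numbers stay bounded on the corresponding zones.

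Finally I would glue the zonal bounds: writing $\hat{w}=\chi_{\intt}\hat{w}+\chi_{\midd}\hat{w}+\chi_{\extt}\hat{w}$ and using $e^{-ct}\lesssim e^{-c'\rho(|\xi|)t}$ on the bounded zone, one obtains $|\hat{w}(t,\xi)|\lesssim e^{-c\rho(|\xi|)t}|\hat{w}_0(\xi)|$ for all $\xi\in\mb{R}^n$ with a single constant $c>0$. I expect the main difficulty to be organizational rather than conceptual: one must verify, uniformly in $\sigma\geqslant1$ and in each $\alpha$-subinterval, that the $\ml{O}(\cdot)$ remainders in the eigenvalue expansions of Propositions~\ref{Prop.2.1} and~\ref{Prop.2.2} are genuinely subordinate to the leading terms on the relevant zone --- which fixes how small $\varepsilon$ and how large $N$ must be chosen, depending only on $\sigma$ and $\alpha$ --- and that replacing $1+|\xi|^2$ by $1$ as $|\xi|\to0$ and by $|\xi|^2$ as $|\xi|\to\infty$ indeed converts the two defining quotients of $\rho$ into the leading exponents $2\sigma-2\sigma\alpha$ and $6\sigma\alpha-2\sigma$ furnished by the diagonalization.
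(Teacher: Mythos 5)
Your argument is correct and is exactly the route the paper takes: Proposition \ref{Prop.PointWise} is obtained there by combining Propositions \ref{Prop.2.1}--\ref{Prop.2.4} zone by zone, reading off the slowest real part of the eigenvalues in each frequency regime and absorbing the uniformly bounded condition numbers of the diagonalizers into the implicit constant. Your verification that the $\ml{O}(\cdot)$ remainders are subordinate and that the two branches of $\rho$ reproduce the leading exponents $2\sigma-2\sigma\alpha$ and $6\sigma\alpha-2\sigma$ in the appropriate limits supplies precisely the details the paper leaves implicit.
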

Let us now give some explanations for decay properties, which are characterized by the pointwise estimates in Proposition \ref{Prop.PointWise}.
\begin{itemize}
	\item We observe that the threshold for decay properties for $|\xi|\rightarrow0$ is $\alpha=1/2$. Precisely, in the case when $\alpha\in[0,1/2]$, the key function fulfills $\rho(|\xi|)\asymp |\xi|^{2\sigma-2\sigma\alpha}$ for $|\xi|\rightarrow0$, and in the case when $\alpha\in(1/2,1]$, the key function satisfies $\rho(|\xi|)\asymp |\xi|^{6\sigma\alpha-2\sigma}$ for $|\xi|\rightarrow0$. Thus, we expect that there exist different decay estimates between these two cases.
	\item The threshold for decay properties for $|\xi|\rightarrow\infty$ is $\alpha=1/3$. In other words, we may see $\rho(|\xi|)\asymp |\xi|^{-2\sigma(1-3\alpha)}$ in the case when $\alpha\in[0,1/3)$, which leads to decay properties of regularity-loss type. However, for the other case when $\alpha\in[1/3,1]$, we expect that the regularity-loss structure is removed.
	\item All in all, the threshold for decay properties can be described by the numbers
	\begin{align*}
	\alpha=\frac{1}{2}\text{ for }|\xi|\rightarrow0\quad\text{and}\quad\alpha=\frac{1}{3}\text{ for }|\xi|\rightarrow\infty.
	\end{align*}
	We should point out that the threshold $\alpha=1/3$ for regularity-loss structure is the same as the threshold for polynomial stability of the semigroup to \eqref{Eq.Gen.Ther.Plate.Eq.} investigated in \cite{HaoLiu2013}.
\end{itemize} 

In order to derive estimates of solutions in a framework of $L^{1,\kappa}$ spaces with $\kappa\in[0,1]$, we first introduce the next lemmas.
\begin{lemma}\label{Lemma.1}
	Let us consider $f\in L^{1,\kappa}$ with $\kappa\in[0,1]$. Then, the following estimates hold:
	\begin{align*}
	\left\|\chi_{\intt}(D)|D|^s\ml{F}_{\xi\rightarrow x}^{-1}\left(e^{-c|\xi|^{\theta}t}\right)f(x)\right\|_{L^2}\lesssim (1+t)^{-\frac{n+2(s+\kappa)}{2\theta}}\|f\|_{L^{1,\kappa}}+(1+t)^{-\frac{n+2s}{2\theta}}|P_f|,
	\end{align*}
	for $s\geqslant0$, $\theta>0$ and $c>0$.
\end{lemma}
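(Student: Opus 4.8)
The plan is to split the $L^2$ norm on the left-hand side into a low-frequency piece (automatically handled by the cut-off $\chi_{\intt}(D)$) and exploit the Hausdorff--Young/Plancherel identity to pass to the Fourier side, then estimate the resulting integral over the small frequency zone $\ml{Z}_{\intt}(\varepsilon)$ by elementary means. By Plancherel's theorem,
\begin{align*}
\left\|\chi_{\intt}(D)|D|^s\ml{F}^{-1}_{\xi\to x}\left(e^{-c|\xi|^{\theta}t}\widehat{f}(\xi)\right)\right\|_{L^2}^2
\asymp \int_{|\xi|\leqslant\varepsilon}|\xi|^{2s}e^{-2c|\xi|^{\theta}t}|\widehat{f}(\xi)|^2\,d\xi.
\end{align*}
The key input is a pointwise bound for $\widehat{f}(\xi)$ in terms of $\|f\|_{L^{1,\kappa}}$ and $P_f$. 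Writing $\widehat{f}(\xi)=P_f+(\widehat{f}(\xi)-P_f)$ and using $|e^{-ix\cdot\xi}-1|\lesssim |x\cdot\xi|^{\kappa}\lesssim |x|^{\kappa}|\xi|^{\kappa}$ for $\kappa\in[0,1]$, one gets
\begin{align*}
|\widehat{f}(\xi)-P_f|=\left|\int_{\mb{R}^n}\left(e^{-ix\cdot\xi}-1\right)f(x)\,dx\right|\lesssim |\xi|^{\kappa}\|f\|_{L^{1,\kappa}},
\end{align*}
and trivially $|\widehat{f}(\xi)|\leqslant\|f\|_{L^1}\leqslant\|f\|_{L^{1,\kappa}}$.

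Next I would insert these bounds into the frequency integral. The $P_f$-contribution gives
\begin{align*}
|P_f|^2\int_{|\xi|\leqslant\varepsilon}|\xi|^{2s}e^{-2c|\xi|^{\theta}t}\,d\xi\lesssim |P_f|^2(1+t)^{-\frac{n+2s}{\theta}},
\end{align*}
by passing to polar coordinates, substituting $r=|\xi|$ and then $\tau=r^{\theta}t$ (the cut-off at $\varepsilon$ only improves matters for small $t$, so one splits $t\leqslant1$ and $t\geqslant1$ and uses $e^{-2c|\xi|^\theta t}\leqslant1$ on the bounded zone in the former case). The $\|f\|_{L^{1,\kappa}}$-contribution is estimated the same way but with an extra factor $|\xi|^{2\kappa}$, yielding the rate $(1+t)^{-\frac{n+2(s+\kappa)}{\theta}}$. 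Taking square roots and combining the two pieces gives the claimed estimate.

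The only mildly delicate point is the elementary inequality $|e^{-ix\cdot\xi}-1|\lesssim|x|^{\kappa}|\xi|^{\kappa}$ for fractional $\kappa\in(0,1)$: it follows by interpolating the two endpoint bounds $|e^{-ix\cdot\xi}-1|\leqslant 2$ and $|e^{-ix\cdot\xi}-1|\leqslant|x\cdot\xi|$, i.e.\ $|e^{-ix\cdot\xi}-1|=|e^{-ix\cdot\xi}-1|^{1-\kappa}|e^{-ix\cdot\xi}-1|^{\kappa}\lesssim|x\cdot\xi|^{\kappa}$. After that, everything reduces to the standard computation $\int_0^\infty r^{m-1}e^{-cr^\theta t}\,dr\asymp t^{-m/\theta}$ for $m>0$, so no real obstacle remains; the bookkeeping of the $(1+t)$ versus $t$ normalization near $t=0$ is the most error-prone part of the write-up.
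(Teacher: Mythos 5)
Your proposal is correct and follows essentially the same route as the paper: pass to the Fourier side by Plancherel, bound $|\hat{f}(\xi)|$ by $|\xi|^{\kappa}\|f\|_{L^{1,\kappa}}+|P_f|$, and compute the resulting weighted Gaussian-type integrals over $\ml{Z}_{\intt}(\varepsilon)$. The only difference is that the paper invokes Lemma 2.1 of Ikehata (2004) for the pointwise bound on $\hat{f}(\xi)-P_f$, whereas you prove it inline via the interpolation $|e^{-ix\cdot\xi}-1|\lesssim|x|^{\kappa}|\xi|^{\kappa}$ --- which is exactly the content of that cited lemma.
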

\begin{proof}
	Let us employ the Plancherel theorem to get
	\begin{align*}
	&\left\|\chi_{\intt}(D)|D|^s\ml{F}_{\xi\rightarrow x}^{-1}\left(e^{-c|\xi|^{\theta}t}\right)f(x)\right\|_{L^2}=\left\|\chi_{\intt}(\xi)|\xi|^se^{-c|\xi|^{\theta}t}\hat{f}(\xi)\right\|_{L^2}\\
	&\lesssim\left\|\chi_{\intt}(\xi)|\xi|^{s+\kappa}e^{-c|\xi|^{\theta}t}\right\|_{L^2}\|f\|_{L^{1,\kappa}}+\left\|\chi_{\intt}(\xi)|\xi|^{s}e^{-c|\xi|^{\theta}t}\right\|_{L^2}|P_f|,
	\end{align*}
	where we used Lemma 2.1 in \cite{Ikehata2004}.\\
	Due to the fact that
	\begin{align*}
	\left\|\chi_{\intt}(\xi)|\xi|^{s}e^{-c|\xi|^{\theta}t}\right\|_{L^2}\lesssim\left(\int_{|\xi|\leqslant\varepsilon}|\xi|^{2s}e^{-2c|\xi|^{\theta}t}d\xi\right)^{1/2}\lesssim (1+t)^{-\frac{n+2s}{2\theta}},
	\end{align*}
	we may immediately complete the proof.
\end{proof}

\begin{lemma}\label{Lemma.2}
	Let us consider $f\in H^s$ with $s\geqslant0$. Then, the following estimates hold:
	\begin{align*}
	\left\|\chi_{\extt}(D)|D|^{s_0}\ml{F}_{\xi\rightarrow x}^{-1}\left(e^{-c|\xi|^{\theta}t}\right)f(x)\right\|_{L^2}\lesssim
	\left\{
	\begin{aligned}
	&(1+t)^{-\frac{\ell}{-\theta}}\|f\|_{H^{s_0+\ell}}&&\text{if }\theta<0,\\
	&e^{-ct}\|f\|_{H^{s_0}}&&\text{if }\theta\geqslant0,
	\end{aligned}
	\right.
	\end{align*}
	where $0\leqslant s_0+\ell\leqslant s$, $s_0\geqslant0$, $\ell\geqslant0$, $\theta\in\mb{R}$ and $c>0$.
\end{lemma}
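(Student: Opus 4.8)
The plan is to handle the high-frequency piece exactly as the low-frequency piece in Lemma~\ref{Lemma.1}: pass to the Fourier side by Plancherel, and then estimate the symbol $\chi_{\extt}(\xi)|\xi|^{s_0}e^{-c|\xi|^{\theta}t}$ against the $L^2$-norm of $\hat{f}$ weighted by an appropriate power of $\langle\xi\rangle$. First I would write
\begin{align*}
\left\|\chi_{\extt}(D)|D|^{s_0}\ml{F}_{\xi\rightarrow x}^{-1}\left(e^{-c|\xi|^{\theta}t}\right)f(x)\right\|_{L^2}
=\left\|\chi_{\extt}(\xi)|\xi|^{s_0}e^{-c|\xi|^{\theta}t}\hat{f}(\xi)\right\|_{L^2}.
\end{align*}
On the support of $\chi_{\extt}$ we have $|\xi|\geqslant N\gg1$, so $|\xi|\asymp\langle\xi\rangle$ and $|\xi|^{\ell}\lesssim\langle\xi\rangle^{\ell}$; the idea is then to bound the whole expression by $\sup_{|\xi|\geqslant N}\big(|\xi|^{-\ell}e^{-c|\xi|^{\theta}t}\big)\,\big\||\xi|^{s_0+\ell}\hat{f}(\xi)\big\|_{L^2}$, where the last factor is controlled by $\|f\|_{H^{s_0+\ell}}$ since $s_0+\ell\leqslant s$.

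The two cases then reduce to elementary one-variable estimates for $g(r):=r^{-\ell}e^{-cr^{\theta}t}$ on $r\geqslant N$. When $\theta\geqslant0$, one has $e^{-cr^{\theta}t}\leqslant e^{-cN^{\theta}t}\lesssim e^{-c't}$ for $r\geqslant N$ (with $c'=cN^{\theta}$ if $\theta>0$, and trivially if $\theta=0$), while $r^{-\ell}\leqslant N^{-\ell}$ is bounded; taking $\ell=0$ in this case gives $e^{-ct}\|f\|_{H^{s_0}}$ after renaming the constant. When $\theta<0$, write $\theta=-|\theta|$ so the exponent is $e^{-ct/r^{|\theta|}}$; here I would use the standard inequality $e^{-cs}\lesssim s^{-k}$ for $s>0$ and any $k\geqslant0$ with $s=t/r^{|\theta|}$ and $k=\ell/|\theta|$, which yields $e^{-ct/r^{|\theta|}}\lesssim (t/r^{|\theta|})^{-\ell/|\theta|}=t^{-\ell/|\theta|}r^{\ell}$, hence $g(r)=r^{-\ell}e^{-ct/r^{|\theta|}}\lesssim t^{-\ell/|\theta|}$ uniformly in $r\geqslant N$. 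To replace $t^{-\ell/|\theta|}$ by $(1+t)^{-\ell/|\theta|}=(1+t)^{-\ell/(-\theta)}$ one uses that for $0\leqslant t\leqslant1$ the symbol $\chi_{\extt}(\xi)|\xi|^{s_0}e^{-c|\xi|^{\theta}t}\hat f(\xi)$ is trivially bounded in $L^2$ by $\|f\|_{H^{s_0}}\leqslant\|f\|_{H^{s_0+\ell}}$, and for $t\geqslant1$ we have $t^{-\ell/|\theta|}\asymp(1+t)^{-\ell/|\theta|}$.

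Putting the two regimes together gives the stated bound. I do not expect a genuine obstacle here; the only point requiring a little care is the uniformity of the supremum $\sup_{r\geqslant N}g(r)$ in the case $\theta<0$ — one must check that $g$ does not blow up as $r\to\infty$, but since $g(r)\leqslant r^{-\ell}\cdot 1$ when $\ell\geqslant0$ this is immediate, and the quantitative decay in $t$ comes solely from the elementary inequality $e^{-cs}\lesssim s^{-k}$. The matching of constants $c$ across the statement (the $c$ in the hypothesis versus the $c$ in $e^{-ct}$) is harmless since all are absorbed into the generic constant in $\lesssim$.
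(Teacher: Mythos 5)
Your proposal is correct and follows essentially the same route as the paper: Plancherel, then bounding the symbol by $\sup_{|\xi|\geqslant N}\bigl(|\xi|^{-\ell}e^{-c|\xi|^{\theta}t}\bigr)$ times $\|f\|_{H^{s_0+\ell}}$ via the $L^\infty$--$L^2$ product estimate, followed by the elementary one-variable bounds in each regime of $\theta$. You merely spell out the step from $t^{-\ell/|\theta|}$ to $(1+t)^{-\ell/(-\theta)}$ (splitting $t\leqslant1$ and $t\geqslant1$), which the paper leaves implicit.
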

\begin{proof}
	For the case when $\theta<0$, we may estimate
	\begin{align*}
	\left\|\chi_{\extt}(D)|D|^{s_0}\ml{F}_{\xi\rightarrow x}^{-1}\left(e^{-c|\xi|^{\theta}t}\right)f(x)\right\|_{L^2}&\lesssim\sup\limits_{|\xi|\geqslant N}\left(|\xi|^{-\ell}e^{-c|\xi|^{-(-\theta)}t}\right)\|f\|_{H^{s_0+\ell}}\\
	&\lesssim (1+t)^{-\frac{\ell}{-\theta}}\|f\|_{H^{s_0+\ell}},
	\end{align*}
	where we used $\|fg\|_{L^2}\lesssim\|g\|_{L^{\infty}}\|f\|_{L^2}$.\\
	For the case when $\theta\geqslant0$, we directly obtain
	\begin{align*}
	\left\|\chi_{\extt}(D)|D|^{s_0}\ml{F}_{\xi\rightarrow x}^{-1}\left(e^{-c|\xi|^{\theta}t}\right)f(x)\right\|_{L^2}&\lesssim\sup\limits_{|\xi|\geqslant N}\left(e^{-c|\xi|^{\theta}t}\right)\|f\|_{H^{s_0}}\\
	&\lesssim e^{-cN^{\theta}t}\|f\|_{H^{s_0}}.
	\end{align*}
	The proof of the lemma is completed.
\end{proof}

Let us state our main result of decay estimates of solutions.
\begin{theorem}\label{Thm.Solution.Esitmate}
	Suppose that initial data $w_0 \in H^s\cap L^{1,\kappa}$  with $s\geqslant0$ and $\kappa\in[0,1]$. Then, the solutions for \eqref{Eq.First-Order.TPE}  with $\sigma\geqslant1$ satisfies the following estimates:
	\begin{align*}
	\|w(t,\cdot)\|_{\dot{H}^{s_0}}\lesssim\left\{
	\begin{aligned}
	&(1+t)^{-\frac{n+2(s_0+\kappa)}{2(2\sigma-2\sigma\alpha)}}\|w_0\|_{L^{1,\kappa}}+(1+t)^{-\frac{\ell}{2\sigma(1-3\alpha)}}\|w_0\|_{H^{s_0+\ell}}\\
	&\qquad\qquad\qquad\qquad\qquad\,\,\,\,\,+(1+t)^{-\frac{n+2s_0}{2(2\sigma-2\sigma\alpha)}}|P_{w_0}|\qquad\text{if }\alpha\in[0,1/3),\\
	&(1+t)^{-\frac{n+2(s_0+\kappa)}{2(2\sigma-2\sigma\alpha)}}\|w_0\|_{L^{1,\kappa}}+e^{-ct}\|w_0\|_{H^{s_0}}\\
	&\qquad\qquad\qquad\qquad\qquad\,\,\,\,\,+(1+t)^{-\frac{n+2s_0}{2(2\sigma-2\sigma\alpha)}}|P_{w_0}|\qquad\text{if }\alpha\in[1/3,1/2],\\
	&(1+t)^{-\frac{n+2(s_0+\kappa)}{2(6\sigma\alpha-2\sigma)}}\|w_0\|_{L^{1,\kappa}}+e^{-ct}\|w_0\|_{H^{s_0}}\\
	&\qquad\qquad\qquad\qquad\qquad\,\,\,\,\,+(1+t)^{-\frac{n+2s_0}{2(6\sigma\alpha-2\sigma)}}|P_{w_0}|\qquad\text{if }\alpha\in(1/2,1],
	\end{aligned}
	\right.
	\end{align*}
	where $0\leqslant s_0+\ell\leqslant s$ with $s_0\geqslant0$ and $\ell\geqslant0$. Here $c>0$ is a positive constant.
\end{theorem}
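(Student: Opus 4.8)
The plan is to split the Fourier space into the three zones $\ml{Z}_{\intt}(\varepsilon)$, $\ml{Z}_{\midd}(\varepsilon,N)$, $\ml{Z}_{\extt}(N)$ by means of the cut-off functions and to estimate $\|\chi_{\bullet}(D)w(t,\cdot)\|_{\dot H^{s_0}}$ on each zone separately, invoking the pointwise estimate of Proposition \ref{Prop.PointWise}. By Plancherel, $\|w(t,\cdot)\|_{\dot H^{s_0}}^2\asymp\||\xi|^{s_0}\hat w(t,\xi)\|_{L^2}^2$, and Proposition \ref{Prop.PointWise} gives $|\hat w(t,\xi)|\lesssim e^{-c\rho(|\xi|)t}|\hat w_0(\xi)|$. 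So the whole argument reduces to understanding the multiplier $|\xi|^{s_0}e^{-c\rho(|\xi|)t}$ on each zone, where $\rho$ is piecewise: $\rho(|\xi|)\asymp|\xi|^{2\sigma-2\sigma\alpha}$ near $\xi=0$ when $\alpha\in[0,1/2]$, $\rho(|\xi|)\asymp|\xi|^{6\sigma\alpha-2\sigma}$ near $\xi=0$ when $\alpha\in(1/2,1]$, and $\rho(|\xi|)\asymp|\xi|^{-2\sigma(1-3\alpha)}$ for $|\xi|\to\infty$ when $\alpha\in[0,1/3)$, while $\rho$ is bounded below by a positive constant for large $|\xi|$ when $\alpha\in[1/3,1]$.

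Next I would handle the small-frequency zone. Here $\rho(|\xi|)\asymp|\xi|^{\theta}$ with $\theta=2\sigma-2\sigma\alpha$ if $\alpha\in[0,1/2]$ and $\theta=6\sigma\alpha-2\sigma$ if $\alpha\in(1/2,1]$ — in both cases $\theta>0$ — so $\chi_{\intt}(\xi)e^{-c\rho(|\xi|)t}\asymp\chi_{\intt}(\xi)e^{-c|\xi|^{\theta}t}$ up to adjusting the constant $c$. Then Lemma \ref{Lemma.1} applied with this $\theta$ and $s=s_0$ immediately yields the $L^{1,\kappa}$-term and the $|P_{w_0}|$-term with exactly the exponents $\frac{n+2(s_0+\kappa)}{2\theta}$ and $\frac{n+2s_0}{2\theta}$ that appear in the statement. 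For the large-frequency zone one uses Lemma \ref{Lemma.2}: when $\alpha\in[0,1/3)$ the effective exponent is $\theta=-2\sigma(1-3\alpha)<0$, so Lemma \ref{Lemma.2} gives the polynomial-with-loss term $(1+t)^{-\ell/(2\sigma(1-3\alpha))}\|w_0\|_{H^{s_0+\ell}}$; when $\alpha\in[1/3,1]$ the exponent $\theta\geqslant0$ so Lemma \ref{Lemma.2} gives the exponential term $e^{-ct}\|w_0\|_{H^{s_0}}$. The middle-frequency zone is dealt with by Proposition \ref{Prop.2.4}, which supplies an $e^{-ct}$ bound there; this is dominated by the other terms since on $\ml{Z}_{\midd}$ the $\dot H^{s_0}$-norm is comparable to the $L^2$-norm and $\|w_0\|_{L^2}\lesssim\|w_0\|_{L^{1,\kappa}}$ on that compact annulus, or can simply be absorbed into $\|w_0\|_{H^{s_0}}$. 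Summing the three contributions gives the claimed estimates.

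The bookkeeping obstacle I anticipate is matching the exponents precisely and making sure the powers of $|\xi|$ in $\rho$ behave as claimed across the parameter ranges — in particular verifying $\rho(|\xi|)\asymp|\xi|^{\theta}$ for $|\xi|\to0$ and the corresponding behaviour for $|\xi|\to\infty$ directly from the definition $\rho(|\xi|)=|\xi|^{2\sigma-2\sigma\alpha}(1+|\xi|^2)^{-(2\sigma-4\sigma\alpha)}$ (resp.\ the second branch). For $|\xi|$ small the factor $(1+|\xi|^2)^{\text{power}}\asymp1$, so $\rho(|\xi|)\asymp|\xi|^{2\sigma-2\sigma\alpha}$; for $|\xi|$ large, $(1+|\xi|^2)^{-(2\sigma-4\sigma\alpha)}\asymp|\xi|^{-(4\sigma-8\sigma\alpha)}$, hence $\rho(|\xi|)\asymp|\xi|^{2\sigma-2\sigma\alpha-4\sigma+8\sigma\alpha}=|\xi|^{6\sigma\alpha-2\sigma}=|\xi|^{-2\sigma(1-3\alpha)}$, which is negative exactly when $\alpha<1/3$. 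One must also keep track of the constraint $\kappa\in[0,1]$ so that Lemma \ref{Lemma.1} is applicable (it relies on Lemma 2.1 of \cite{Ikehata2004}, valid in that range). Once these elementary comparisons are recorded, the three-zone estimate assembles routinely and the theorem follows.
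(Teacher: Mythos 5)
Your proposal is correct and follows essentially the same route as the paper: decompose into the three frequency zones via Plancherel, feed the pointwise bound of Proposition \ref{Prop.PointWise} into Lemma \ref{Lemma.1} for small frequencies and Lemma \ref{Lemma.2} for large frequencies, and use the exponential bound of Proposition \ref{Prop.2.4} for bounded frequencies. The explicit verification of $\rho(|\xi|)\asymp|\xi|^{2\sigma-2\sigma\alpha}$ (resp.\ $|\xi|^{6\sigma\alpha-2\sigma}$) at the two ends, and of the sign change of the large-frequency exponent at $\alpha=1/3$, is exactly the bookkeeping the paper leaves implicit.
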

\begin{proof}
	We apply Proposition \ref{Prop.PointWise} and the Plancherel theorem to get
	\begin{align*}
	\|w(t,\cdot)\|_{\dot{H}^{s_0}}\lesssim&\left\|\chi_{\intt}(D)|D|^{s_0}\ml{F}_{\xi\rightarrow x}^{-1}\left(e^{-c\rho(|\xi|)t}\right)w_0(x)\right\|_{L^2}+e^{-ct}\left\|\chi_{\midd}(D)w_0(x)\right\|_{H^{s_0}}\\
	&+\left\|\chi_{\extt}(D)|D|^{s_0}\ml{F}_{\xi\rightarrow x}^{-1}\left(e^{-c\rho(|\xi|)t}\right)w_0(x)\right\|_{L^2}.
	\end{align*}
	Then, combining with Lemmas \ref{Lemma.1} and \ref{Lemma.2}, we  complete the proof.
\end{proof}
\begin{remark}
	Providing that we consider $|P_{w_0}|\equiv0$ in Theorem \ref{Thm.Solution.Esitmate}, we may observe that decay rate of the corresponding estimates can be improved by $(1+t)^{-\frac{\kappa}{2\sigma-2\sigma\alpha}}$ if $\alpha\in[0,1/2]$, and by $(1+t)^{-\frac{\kappa}{6\sigma\alpha-2\sigma}}$ if $\alpha\in(1/2,1]$, where $\kappa\in[0,1]$.
\end{remark}
\begin{remark}
	We are interested in decay estimates in a framework of weighted $L^1$ spaces. One may also derive $(L^2\cap L^m)-L^2$ estimates with $m\in[1,2]$ by using H\"older's inequality and the Hausdorff-Young inequality.
\end{remark}
\begin{remark}
We distinguish the dominant part of the decay rate in Theorem \ref{Thm.Solution.Esitmate} when $\alpha\in[0,1/3)$ according to the parameter $\ell$. When the parameter $\ell\geqslant0$ fulfills
\begin{align}\label{Cond.Decay.Rate}
\ell<\frac{1-3\alpha}{2(1-\alpha)}(n+2s_0)\quad\text{with }\alpha\in[0,1/3),
\end{align}
the decay rate is determined by $(1+t)^{-\frac{\ell}{2\sigma(1-3\alpha)}}$. Conversely, if the condition \eqref{Cond.Decay.Rate} does not hold, then the decay rate is determined by $(1+t)^{-\frac{n+2s_0}{2(2\sigma-2\sigma\alpha)}}$.
\end{remark}
\begin{remark}
Providing that we choose $\ell=0$ to avoid regularity-loss when $\alpha\in[0,1/3)$, we now only can get a bounded estimate for the case when $\alpha\in[0,1/3)$.
\end{remark}
\begin{remark}
	The decay properties of regularity-loss type appear in Theorem \ref{Thm.Solution.Esitmate} only if $\alpha\in[0,1/3)$ for $|\xi|\rightarrow\infty$. In other words, it requires $s_0+\ell$ regularity for initial data to estimates solutions in the $\dot{H}^{s_0}$ norm. We observe that there occurs the effect of regularity-loss by moving the parameter from $\alpha\in[1/3,1]$ to $\alpha\in[0,1/3)$. In this way, again, we may interpret that the number $\alpha=1/3$ is the threshold for decay properties of regularity-loss type of generalized thermoelastic plate equations \eqref{Eq.Gen.Ther.Plate.Eq.}.
\end{remark}

\section{Asymptotic profiles of solutions}\label{Subsec.Asym.Prof.TPE}
It is well-known that there exist several kinds of idea to investigate asymptotic profiles, for example, \cite{Nishihara2003,Ikehata2014,IkehataMichihisa2019,Chen2019KV}. In this part, we will construct several evolution reference systems with suitably choice of initial data to characterize asymptotic profiles of solutions in a framework of weighted $L^1$ spaces. So, we interpret asymptotic profiles by generalized diffusion phenomena here.

To describe long-time asymptotic behavior of solutions, we investigate asymptotic profiles of solutions for \eqref{Eq.First-Order.TPE} with $\sigma\in[1,\infty)$ and $\alpha\in[0,1/2)\cup(1/2,1]$ in this section. For the case when $\alpha=1/2$, there is not exists any improvement on the decay rate or regularity on the decay estimates in Theorem \ref{Thm.Solution.Esitmate}, since the kernel $\diag\big(e^{-y_j|\xi|^2t}\big)_{j=1}^3$ plays an dominant role in the explicit representation of solution $\hat{w}(t,\xi)$ in Proposition \ref{Prop.2.3}. For this reason, we are interested in the case $\alpha\in[0,1/2)\cup(1/2,1]$. 

We introduce for our further approach some eigenvalues $\tilde{\lambda}_j(|\xi|)$ and $\bar{\lambda}_j(|\xi|)$ with $j=1,2,3$, which are the principal part of the eigenvalues $\lambda_j(|\xi|)$ for small and large frequencies, respectively, in Propositions \ref{Prop.2.1} and \ref{Prop.2.2}. The value of $\tilde{\lambda}_j(|\xi|)$ is given by
\begin{equation*}
\tilde{\lambda}_j(|\xi|):=\left\{
\begin{aligned}
&-|\xi|^{2\sigma-2\sigma\alpha},&&\text{if }j=1,\\
&-\left(\tfrac{1}{2}+i\tfrac{\sqrt{3}}{2}\right)|\xi|^{2\sigma\alpha}+\left(\tfrac{1}{2}-i\tfrac{\sqrt{3}}{6}\right)|\xi|^{2\sigma-2\sigma\alpha},&&\text{if }j=2,\\
&-\left(\tfrac{1}{2}-i\tfrac{\sqrt{3}}{2}\right)|\xi|^{2\sigma\alpha}+\left(\tfrac{1}{2}+i\tfrac{\sqrt{3}}{6}\right)|\xi|^{2\sigma-2\sigma\alpha},&&\text{if }j=3,
\end{aligned}
\right.
\end{equation*}
and the value of $\bar{\lambda}_j(|\xi|)$ is given by
\begin{equation*}
\bar{\lambda}_j(|\xi|):=\left\{
\begin{aligned}
&i|\xi|^{\sigma}+\tfrac{i}{2}|\xi|^{4\sigma\alpha-\sigma}-\tfrac{1}{2}|\xi|^{6\sigma\alpha-2\sigma},&&\text{if }j=1,\\
&-i|\xi|^{\sigma}-\tfrac{i}{2}|\xi|^{4\sigma\alpha-\sigma}-\tfrac{1}{2}|\xi|^{6\sigma\alpha-2\sigma},&&\text{if }j=2,\\
&-|\xi|^{2\sigma\alpha}+|\xi|^{6\sigma\alpha-2\sigma},&&\text{if }j=3.
\end{aligned}
\right.
\end{equation*}

We now introduce the first reference system for $\sigma\geqslant1$ and $\alpha\in[0,1/2)$
\begin{equation}\label{Eq reference system 1}
\left\{
\begin{aligned}
&\widetilde{w}_t+\widetilde{B}_0(-\Delta)^{\sigma\alpha}\widetilde{w}+\widetilde{B}_1(-\Delta)^{\sigma-\sigma\alpha}\widetilde{w}=0,&&x\in\mb{R}^n,\,\,t>0,\\
&\widetilde{w}(0,x)=\ml{F}^{-1}\left((I_{3\times 3}+N_2(|\xi|))^{-1}N_1^{-1}\right)w_0(x),&&x\in\mb{R}^n,
\end{aligned}
\right.
\end{equation}
where the diagonal coefficient matrices are
\begin{align}\label{Mat.tB01}
\widetilde{B}_0=\diag\left(0,\tfrac{1}{2}+i\tfrac{\sqrt{3}}{2},\tfrac{1}{2}-i\tfrac{\sqrt{3}}{2}\right),\quad \widetilde{B}_1=\diag\left(1,-\tfrac{1}{2}+i\tfrac{\sqrt{3}}{6},-\tfrac{1}{2}-i\tfrac{\sqrt{3}}{6}\right).
\end{align}
It is clear that the solution in the Fourier space to \eqref{Eq reference system 1} is given by
\begin{align*}
\ml{F}_{x\rightarrow\xi}(\widetilde{w})(t,\xi)=\diag\left(e^{\tilde{\lambda}_j(|\xi|)t}\right)_{j=1}^3(I_{3\times 3}+N_2(|\xi|))^{-1}N_1^{-1}\hat{w}_0(\xi).
\end{align*}

Moreover, we the second reference system for $\sigma\geqslant1$ and $\alpha\in[0,1/3)\cup(1/2,1]$
\begin{equation}\label{Eq reference system 2}
\left\{
\begin{aligned}
&\overline{w}_t+\overline{B}_0(-\Delta)^{\sigma/2}\overline{w}+\overline{B}_1(-\Delta)^{\sigma\alpha}\overline{w}\\
&\quad+\overline{B}_2(-\Delta)^{2\sigma\alpha-\sigma/2}\overline{w}+\overline{B}_3(-\Delta)^{3\sigma\alpha-\sigma}\overline{w}=0,&&x\in\mb{R}^n,\,\,t>0,\\
&\overline{w}(0,x)=\ml{F}^{-1}\left((I_{3\times 3}+N_5(|\xi|))^{-1}(I_{3\times 3}+N_4(|\xi|))^{-1}\right)w_0(x),&&x\in\mb{R}^n,
\end{aligned}
\right.
\end{equation}
where the diagonal coefficient matrices are
\begin{align}\label{Mat.oB0123}
\overline{B}_0=\diag(-i,i,0),\quad\overline{B}_1=\diag(0,0,1),\quad\overline{B}_2=\diag\left(-\tfrac{i}{2},\tfrac{i}{2},0\right),\quad\overline{B}_3=\diag\left(\tfrac{1}{2},\tfrac{1}{2},-1\right).
\end{align}
Similarly, the solution to \eqref{Eq reference system 2} in the Fourier space is given by
\begin{align*}
\ml{F}_{x\rightarrow\xi}(\overline{w})(t,\xi)=\diag\left(e^{\bar{\lambda}_j(|\xi|)t}\right)_{j=1}^3(I_{3\times 3}+N_5(|\xi|))^{-1}(I_{3\times 3}+N_4(|\xi|))^{-1}\hat{w}_0(\xi).
\end{align*}

Before stating our main results, we denote 
\begin{align*}
S_{\intt}(t,x)&=\chi_{\intt}(D)\ml{F}^{-1}(N_1(I_{3\times 3}+N_2(|\xi|)))\widetilde{w}(t,x),&&\text{for }\alpha\in[0,1/2),\\
\widetilde{S}_{\intt}(t,x)&=\chi_{\intt}(D)\ml{F}^{-1}((I_{3\times 3}+N_4(|\xi|))(I_{3\times 3}+N_5(|\xi|)))\overline{w}(t,x),&&\text{for }\alpha\in(1/2,1],\\
S_{\extt}(t,x)&=\chi_{\extt}(D)\ml{F}^{-1}((I_{3\times 3}+N_4(|\xi|))(I_{3\times 3}+N_5(|\xi|)))\overline{w}(t,x),&&\text{for }\alpha\in[0,1/3).
\end{align*}
\begin{remark}
The functions $S_{\intt}(t,x)$, $\widetilde{S}_{\intt}(t,x)$ and $S_{\extt}(t,x)$ satisfy the estimates
\begin{align*}
\|S_{\intt}(t,\cdot)\|_{\dot{H}^{s_0}}&\lesssim (1+t)^{-\frac{n+2(s_0+\kappa)}{2(2\sigma-2\sigma\alpha)}}\|w_0\|_{L^{1,\kappa}}+(1+t)^{-\frac{n+2s_0}{2(2\sigma-2\sigma\alpha)}}|P_{w_0}|,\\
\|\widetilde{S}_{\intt}(t,\cdot)\|_{\dot{H}^{s_0}}&\lesssim(1+t)^{-\frac{n+2(s_0+\kappa)}{2(6\sigma\alpha-2\sigma)}}\|w_0\|_{L^{1,\kappa}}+(1+t)^{-\frac{n+2s_0}{2(6\sigma\alpha-2\sigma)}}|P_{w_0}|,\\
\|S_{\extt}(t,\cdot)\|_{\dot{H}^{s_0}}&\lesssim (1+t)^{-\frac{\ell}{2\sigma(1-3\alpha)}}\|w_0\|_{H^{s_0+\ell}},
\end{align*}
where initial data $w_0$ is taken from $H^{s}\cap L^{1,\kappa}$ with $0\leqslant s_0+\ell\leqslant s$, $s_0\geqslant0$, $\ell\geqslant0$ and $\kappa\in[0,1]$.
\end{remark}

We now consider the case when $\alpha\in[0,1/3)$ in the first place. Due to Theorem \ref{Thm.Solution.Esitmate}, the decay properties are regularity-loss type. That is to say the decay rate is determined by solutions localized to small frequencies and large frequencies. Moreover, the decay estimates can be obtained by assuming suitable regularity on initial data. For this reason, we have to consider asymptotic profiles of solutions for small frequencies and large frequencies at the same time. 
\begin{theorem}\label{Thm.Asymptotic.Profile.}
	Suppose that initial data $w_0\in H^s\cap L^{1,\kappa}$ with $s\geqslant0$ and $\kappa\in[0,1]$. Then, the following refinement estimates for \eqref{Eq.First-Order.TPE} with $\sigma\geqslant1$ and $\alpha\in[0,1/3)$ hold:
	\begin{align*}
	\|(w-S_{\intt}-S_{\extt})(t,\cdot)\|_{\dot{H}^{s_0}}\lesssim&(1+t)^{-\frac{n+2(s_0+\kappa)}{2(2\sigma-2\sigma\alpha)}-\frac{1-2\alpha}{2(1-\alpha)}}\|w_0\|_{L^{1,\kappa}}\\
	&+(1+t)^{-\frac{\ell}{2\sigma(1-3\alpha)}}\|w_0\|_{H^{s_0+\ell-(\sigma-2\sigma\alpha)}}\\
	&+(1+t)^{-\frac{n+2s_0}{2(2\sigma-2\sigma\alpha)}-\frac{1-2\alpha}{2(1-\alpha)}}|P_{w_0}|,
	\end{align*}
	where $0\leqslant s_0+\ell-(\sigma-2\sigma\alpha)\leqslant s$ with $s_0\geqslant0$ and $\ell\geqslant0$.
\end{theorem}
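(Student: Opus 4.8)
The plan is to work in the Fourier space and exploit the representations of solutions from Propositions \ref{Prop.2.1} and \ref{Prop.2.2}, together with the definitions of $S_{\intt}$ and $S_{\extt}$, to estimate the difference $\hat w - \hat S_{\intt} - \hat S_{\extt}$ separately on the three zones $\ml{Z}_{\intt}(\varepsilon)$, $\ml{Z}_{\midd}(\varepsilon,N)$ and $\ml{Z}_{\extt}(N)$. On the middle zone both $\hat w$ and the reference functions decay exponentially (Proposition \ref{Prop.2.4} together with the exponential localization of $\widetilde w$ and $\overline w$ for bounded $|\xi|$), so this contribution is harmless and absorbed into the $e^{-ct}$-type terms hidden in the other estimates. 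The real work is on $\ml{Z}_{\intt}$, where $S_{\intt}$ is the genuine profile, and on $\ml{Z}_{\extt}$, where $S_{\extt}$ is the genuine profile; on $\ml{Z}_{\intt}$ the quantity $\hat w - \hat S_{\intt}$ is built from the error between $\diag(e^{\lambda_j(|\xi|)t})$ and $\diag(e^{\tilde\lambda_j(|\xi|)t})$ and from the higher-order remainder matrix $N_3(|\xi|)=\ml O(|\xi|^{2\sigma-4\sigma\alpha})$, and symmetrically on $\ml{Z}_{\extt}$ with $N_6(|\xi|)=\ml O(|\xi|^{8\sigma\alpha-3\sigma})$.

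First I would treat the small-frequency part. Writing $\hat w = N_{\alpha,\intt}\diag(e^{\lambda_j t})N_{\alpha,\intt}^{-1}\hat w_0$ and $\hat S_{\intt}=N_1(I+N_2)\diag(e^{\tilde\lambda_j t})(I+N_2)^{-1}N_1^{-1}\hat w_0$ on $\ml{Z}_{\intt}$, I would split the difference as (i) the contribution of the factor $(I+N_3)^{\pm1}-I$, which is $\ml O(|\xi|^{2\sigma-4\sigma\alpha})$, multiplied by a bounded exponential, and (ii) the contribution of $e^{\lambda_j t}-e^{\tilde\lambda_j t}$. For (ii), using $\lambda_j-\tilde\lambda_j=\ml O(|\xi|^{3\sigma-4\sigma\alpha})$ from Proposition \ref{Prop.2.1} and the elementary bound $|e^{\lambda_j t}-e^{\tilde\lambda_j t}|\lesssim t|\lambda_j-\tilde\lambda_j|e^{-c\rho(|\xi|)t}$, one gains a factor $t|\xi|^{3\sigma-4\sigma\alpha}e^{-c|\xi|^{2\sigma-2\sigma\alpha}t}$; absorbing $t$ against part of the exponential turns $t|\xi|^{3\sigma-4\sigma\alpha}$ into $|\xi|^{(3\sigma-4\sigma\alpha)-(2\sigma-2\sigma\alpha)}=|\xi|^{\sigma-2\sigma\alpha}$, i.e. an extra power $\sigma-2\sigma\alpha=\sigma(1-2\alpha)$ of $|\xi|$, which is exactly what is needed. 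Then I would invoke Lemma \ref{Lemma.1} with $\theta=2\sigma-2\sigma\alpha$ and $s$ replaced by $s_0+(\sigma-2\sigma\alpha)$; the extra $\sigma(1-2\alpha)$ powers produce the improved rate $(1+t)^{-\frac{n+2(s_0+\kappa)}{2(2\sigma-2\sigma\alpha)}-\frac{\sigma(1-2\alpha)}{2\sigma-2\sigma\alpha}}=(1+t)^{-\frac{n+2(s_0+\kappa)}{2(2\sigma-2\sigma\alpha)}-\frac{1-2\alpha}{2(1-\alpha)}}$, and likewise for the $|P_{w_0}|$ term. Care is needed that $|P_{w_0}|$ still controls the zeroth moment of the modified data $\ml F^{-1}((I+N_2)^{-1}N_1^{-1})w_0$, which holds because $N_2(0)=0$ so the modification does not change the value at $\xi=0$.

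Next I would treat the large-frequency part in the completely analogous way: on $\ml{Z}_{\extt}$, $\hat w-\hat S_{\extt}$ decomposes into the $(I+N_6)^{\pm1}-I=\ml O(|\xi|^{8\sigma\alpha-3\sigma})$ contribution and the $e^{\lambda_j t}-e^{\bar\lambda_j t}$ contribution with $\lambda_j-\bar\lambda_j=\ml O(|\xi|^{8\sigma\alpha-3\sigma})$ from Proposition \ref{Prop.2.2}; both carry an extra factor $|\xi|^{-(\sigma-2\sigma\alpha)}$ relative to the leading large-frequency behavior, which is why the $H^{s_0+\ell}$ norm in the $S_{\extt}$ estimate is replaced by $H^{s_0+\ell-(\sigma-2\sigma\alpha)}$ in the theorem (one trades $\sigma-2\sigma\alpha$ derivatives of regularity for the refinement). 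Here I would apply Lemma \ref{Lemma.2} with $\theta=-2\sigma(1-3\alpha)<0$. The main obstacle I expect is bookkeeping: one must verify that the remainder matrices $R,\widetilde R,\overline R$ produced along the diagonalization chains are genuinely one order smaller than the last retained $\Lambda$-block on each relevant zone, so that the two-term (or four-term) principal parts $\tilde\lambda_j$, $\bar\lambda_j$ really capture the profile up to the claimed gain, and that the inverse matrices $N_{\alpha,\cdot}^{-1}$ stay uniformly bounded together with their $\ml O(|\xi|^{\cdots})$ corrections on the zone in question. Once these asymptotic orders are pinned down, the rest is a routine application of Lemmas \ref{Lemma.1} and \ref{Lemma.2} zone by zone.
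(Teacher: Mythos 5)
Your proposal follows essentially the same route as the paper's proof: the same decomposition into the three frequency zones, the same splitting of $\hat w-\hat S_{\intt}$ (resp.\ $\hat w-\hat S_{\extt}$) into the $N_3$ (resp.\ $N_6$) remainder contribution plus the difference of exponentials, the same mean-value/integral-formula trick converting the eigenvalue error $\ml{O}(|\xi|^{3\sigma-4\sigma\alpha})$ into the gain $-\frac{1-2\alpha}{2(1-\alpha)}$ via Lemma \ref{Lemma.1}, and the same use of Lemma \ref{Lemma.2} to trade the large-frequency gain for $\sigma-2\sigma\alpha$ derivatives of regularity. Your extra remarks (that $N_2(0)=0$ so the modified data keep the same zeroth moment, and that the remainder orders along the diagonalization chain must be verified) are sensible bookkeeping points that the paper leaves implicit, but they do not change the argument.
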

\begin{proof}
	The proof of the theorem is divided into two parts, including the solution localized to small frequencies and the solution localized to large frequencies.
	
	Let us consider the solution localized to small frequencies first. According to the representation of solution stated in Proposition \ref{Prop.2.1}, the solution localized to small frequencies can be represented by
	\begin{align*}
	\chi_{\intt}(\xi)\hat{w}(t,\xi)=\chi_{\intt}(\xi)N_{\alpha,\intt}(|\xi|)\diag\left(e^{\lambda_j(|\xi|)t}\right)_{j=1}^3N_{\alpha,\intt}^{-1}(|\xi|)\hat{w}_0(\xi),
	\end{align*}
	where
	\begin{align*}
	N_{\alpha,\intt}(|\xi|)=N_1(I_{3\times 3}+N_2(|\xi|))(I_{3\times 3}+N_3(|\xi|)).
	\end{align*}
	By using the relation
	\begin{align*}
	(I_{3\times 3}+N_3(|\xi|))^{-1}=I_{3\times 3}-(I_{3\times 3}+N_3(|\xi|))^{-1}N_3(|\xi|),
	\end{align*}
	we may decompose the solution in the following way:
	\begin{align*}
	\chi_{\intt}(\xi)\hat{w}(t,\xi)=\chi_{\intt}(\xi)(J_{\intt,1}(t,|\xi|)+J_{\intt,2}(t,|\xi|)+J_{\intt,3}(t,|\xi|)),
	\end{align*}
	where
	\begin{align*}
	J_{\intt,1}(t,|\xi|)=&N_1(I_{3\times 3}+N_2(|\xi|))\diag\left(e^{\lambda_j(|\xi|)t}\right)_{j=1}^3\\
	&\times (I_{3\times 3}+N_2(|\xi|))^{-1}N_1^{-1}\hat{w}_0(\xi),\\
	J_{\intt,2}(t,|\xi|)=&N_1(I_{3\times 3}+N_2(|\xi|))N_3(|\xi|)\diag\left(e^{\lambda_j(|\xi|)t}\right)_{j=1}^3\\
	&\times(I_{3\times 3}+N_3(|\xi|))^{-1}(I_{3\times 3}+N_2(|\xi|))^{-1}N_1^{-1}\hat{w}_0(\xi),\\
	J_{\intt,3}(t,|\xi|)=&-N_1(I_{3\times 3}+N_2(|\xi|))\diag\left(e^{\lambda_j(|\xi|)t}\right)_{j=1}^3\\
	&\times(I_{3\times 3}+N_3(|\xi|))^{-1}N_3(|\xi|)(I_{3\times 3}+N_2(|\xi|))^{-1}N_1^{-1}\hat{w}_0(\xi),
	\end{align*}
	where $N_3(|\xi|)=\ml{O}\left(|\xi|^{2\sigma-4\sigma\alpha}\right)$ with $\alpha\in[0,1/3)$.
	
	From the following integral formula:
	\begin{align*}
	e^{\tilde{\lambda}_j(|\xi|)t-g_j(|\xi|)t}-e^{\tilde{\lambda}_j(|\xi|)t}=-g_j(|\xi|)te^{\tilde{\lambda}_j(|\xi|)t}\int_0^1e^{-g_j(|\xi|)t\tau}d\tau,
	\end{align*}
	where $g_j(|\xi|)=\ml{O}\left(|\xi|^{3\sigma-4\sigma\alpha}\right)$, we can estimate
	\begin{align*}
	\left|\chi_{\intt}(\xi)J_{\intt,1}(t,|\xi|)-\hat{S}_{\intt}(t,\xi)\right|\lesssim(1+t)|\xi|^{3\sigma-4\sigma\alpha}e^{-c|\xi|^{2\sigma-2\sigma\alpha}t}|\hat{w}_0(\xi)|,
	\end{align*}
	with a positive constant $c>0$. Thus, by using Lemma \ref{Lemma.1} again
	\begin{align*}
	\left\|\chi_{\intt}(\xi)J_{\intt,1}(t,|\xi|)-\hat{S}_{\intt}(t,\xi)\right\|_{\dot{H}^{s_0}}\lesssim&(1+t)^{1-\frac{n+2(s_0+\kappa+3\sigma-4\sigma\alpha)}{2(2\sigma-2\sigma\alpha)}}\|w_0\|_{L^{1,\kappa}}\\
	&+(1+t)^{1-\frac{n+2(s_0+3\sigma-4\sigma\alpha)}{2(2\sigma-2\sigma\alpha)}}|P_{w_0}|,
	\end{align*}
	and
	\begin{align*}
	\left\|\chi_{\intt}(\xi)(J_{\intt,2}(t,|\xi|)+J_{\intt,3}(t,|\xi|))\right\|_{\dot{H}^{s_0}}\lesssim&(1+t)^{-\frac{n+2(s_0+\kappa+2\sigma-4\sigma\alpha)}{2(2\sigma-2\sigma\alpha)}}\|w_0\|_{L^{1,\kappa}}\\
	&+(1+t)^{-\frac{n+2(s_0+2\sigma-4\sigma\alpha)}{2(2\sigma-2\sigma\alpha)}}|P_{w_0}|.
	\end{align*}
	Summarizing the above estimates, we derive
	\begin{align*}
	\|(\chi_{\intt}(D)w-S_{\intt})(t,\cdot)\|_{\dot{H}^{s_0}(\mb{R}^n)}\lesssim&(1+t)^{-\frac{n+2(s_0+\kappa)}{2(2\sigma-2\sigma\alpha)}-\frac{1-2\alpha}{2(1-\alpha)}}\|w_0\|_{L^{1,\kappa}}\\
	&+(1+t)^{-\frac{n+2s_0}{2(2\sigma-2\sigma\alpha)}-\frac{1-2\alpha}{2(1-\alpha)}}|P_{w_0}|.
	\end{align*}
	
		Let us now consider the solution localized to large frequencies. According to the representation of solution stated in Proposition \ref{Prop.2.2}, the solution localized to large frequencies can be represented by
	\begin{align*}
	\chi_{\extt}(\xi)\hat{w}(t,\xi)=\chi_{\extt}(\xi)N_{\alpha,\extt}(|\xi|)\diag\left(e^{\lambda_j(|\xi|)t}\right)_{j=1}^3N_{\alpha,\extt}^{-1}(|\xi|)\hat{w}_0(\xi),
	\end{align*}
	where
	\begin{align*}
	N_{\alpha,\extt}(|\xi|)=(I_{3\times 3}+N_4(|\xi|))(I_{3\times 3}+N_5(|\xi|))(I_{3\times 3}+N_6(|\xi|)).
	\end{align*}
	By using again
	\begin{align*}
	(I_{3\times 3}+N_6(|\xi|))^{-1}=I_{3\times 3}-(I_{3\times 3}+N_6(|\xi|))^{-1}N_6(|\xi|),
	\end{align*}
	we may decompose the solution by the following form:
	\begin{align*}
	\chi_{\extt}(\xi)\hat{w}(t,\xi)=\chi_{\extt}(\xi)(J_{\extt,1}(t,|\xi|)+J_{\extt,2}(t,|\xi|)+J_{\extt,3}(t,|\xi|)),
	\end{align*}
	where
	\begin{align*}
	J_{\extt,1}(t,|\xi|)=&(I_{3\times 3}+N_4(|\xi|))(I_{3\times 3}+N_5(|\xi|))\diag\left(e^{\lambda_j(|\xi|)t}\right)_{j=1}^3\\
	&\times(I_{3\times 3}+N_5(|\xi|))^{-1}(I_{3\times 3}+N_4(|\xi|))^{-1}\hat{w}_0(\xi),\\
	J_{\extt,2}(t,|\xi|)=&(I_{3\times 3}+N_4(|\xi|))(I_3+N_5(|\xi|))N_6(|\xi|)\diag\left(e^{\lambda_j(|\xi|)t}\right)_{j=1}^3\\
	&\times(I_{3\times 3}+N_6(|\xi|))^{-1}(I_{3\times 3}+N_5(|\xi|))^{-1}(I_{3\times 3}+N_4(|\xi|))^{-1}\hat{w}_0(\xi),\\
	J_{\extt,3}(t,|\xi|)=&-(I_{3\times 3}+N_6(|\xi|))^{-1}+N_4(|\xi|))(I_{3\times 3}+N_5(|\xi|))\diag\left(e^{\lambda_j(|\xi|)t}\right)_{j=1}^3\\
	&\times (I_{3\times 3}+N_6(|\xi|))^{-1}N_6(|\xi|)(I_{3\times 3}+N_5(|\xi|))^{-1}(I_{3\times 3}+N_4(|\xi|))^{-1}\hat{w}_0(\xi),
	\end{align*}
	where $N_6(|\xi|)=\ml{O}\left(|\xi|^{6\sigma\alpha-3\sigma}\right)$ with $\alpha\in[0,1/3)$.
	
	 Similar as the discussion for small frequencies, by applying Lemma \ref{Lemma.2} we obtain
	\begin{align*}
	\left\|\chi_{\extt}(\xi)J_{\extt,1}(t,|\xi|)-\hat{S}_{\extt}(t,\xi)\right\|_{\dot{H}^{s_0}}&\lesssim(1+t)^{-\frac{\ell}{2\sigma(1-3\alpha)}}\|w_0\|_{H^{s_0+\ell-(\sigma-2\sigma\alpha)}},\\
	\left\|\chi_{\extt}(\xi)(J_{\extt,2}(t,|\xi|)+J_{\extt,3}(t,|\xi|))\right\|_{\dot{H}^{s_0}}&\lesssim(1+t)^{-\frac{\ell}{2\sigma(1-3\alpha)}}\|w_0\|_{H^{s_0+\ell-(3\sigma-6\sigma\alpha)}}.
	\end{align*}
	Thus, we deduce
	\begin{align*}
	\|(\chi_{\extt}(D)w-S_{\extt})(t,\cdot)\|_{\dot{H}^{s_0}}\lesssim(1+t)^{-\frac{\ell}{2\sigma(1-3\alpha)}}\|w_0\|_{H^{s_0+\ell-(\sigma-2\sigma\alpha)}}.
	\end{align*}
	
	Finally, an exponential decay for middle frequencies without any require of additional regularity can be obtained such that
	\begin{align*}
	\|\chi_{\midd}(D)w(t,\cdot)\|_{\dot{H}^{s_0}}=\|\chi_{\midd}(\xi)|\xi|^{s_0}\hat{w}(t,\cdot)\|_{L^2}\lesssim e^{-ct}\|w_0\|_{L^2}.
	\end{align*}

	 Summarizing the above derived estimates, we complete the proof.
\end{proof}
\begin{remark}
Let us compare Theorem \ref{Thm.Solution.Esitmate} with Theorem \ref{Thm.Asymptotic.Profile.} in the case when $\alpha\in[0,1/3)$. We observe that the decay rate can be improved by $-\frac{1-2\alpha}{2(1-\alpha)}$ after we subtract the function $S_{\intt}(t,x)$ in the $\dot{H}^{s_0}$ norm. Moreover, the regularity of initial data can be weaken by $\sigma-2\sigma\alpha$ order after we subtract the function $S_{\extt}(t,x)$.
\end{remark}
\begin{remark}
	In Theorem \ref{Thm.Asymptotic.Profile.}, we consider the case 
	\begin{align*}
	0\leqslant s_0+\ell-(\sigma-2\sigma\alpha)\leqslant s.
	\end{align*}
	If the parameters $s_0$, $\ell$, $\sigma$ and $\alpha$ satisfy the relation
	\begin{align*}
	s_0+\ell-(\sigma-2\sigma\alpha)<0,
	\end{align*}
	then we may apply
	\begin{align*}
	\left\|\langle D\rangle^{s_0+\ell-(\sigma-2\sigma\alpha)}w_0\right\|_{L^2}=\left\|\langle\xi\rangle^{s_0+\ell-(\sigma-2\sigma\alpha)}\hat{w}_0(\xi)\right\|_{L^2}\lesssim \|w_0\|_{L^2}.
	\end{align*}
	In other words, comparing with Theorem \ref{Thm.Solution.Esitmate}, we find that the regularity of initial data can be weaken by $s_0+\ell$, in the case when $s_0+\ell<\sigma-2\sigma\alpha$. For example, we consider $\sigma=2$, $\alpha=0$ in \eqref{Eq.Gen.Ther.Plate.Eq.} and $s_0=0$, $\ell=1$ in Theorem \ref{Thm.Asymptotic.Profile.}.
\end{remark}

Next, we start to discuss asymptotic profiles of solutions in the case when $\alpha\in[1/3,1]$. We may observe that in Theorem \ref{Thm.Solution.Esitmate}, when $\alpha\in[1/3,1]$, the decay rate of estimates of solutions in $\dot{H}^{s_0}$ is determined by small frequencies only. For large frequencies, we may derive an exponential decay by assuming initial data taken from $H^{s_0}$ spaces. For this reason, we derive asymptotic profiles of solutions only for small frequencies in the case when $\alpha\in[1/3,1]$.
\begin{theorem}\label{Thm.Asy.Pro.2}
	Suppose that initial data $w_0\in H^s\cap L^{1,\kappa}$ with $s\geqslant0$ and $\kappa\in[0,1]$. For one thing, the following refinement estimates for \eqref{Eq.First-Order.TPE} with $\sigma\geqslant1$ and $\alpha\in[1/3,1/2)$ hold:
	\begin{align*}
	\|(\chi_{\intt}(D)w-S_{\intt})(t,\cdot)\|_{\dot{H}^{s_0}}\lesssim&(1+t)^{-\frac{n+2(s_0+\kappa)}{2(2\sigma-2\sigma\alpha)}-\frac{1-2\alpha}{2(1-\alpha)}}\|w_0\|_{L^{1,\kappa}}\\
	&+(1+t)^{-\frac{n+2s_0}{2(2\sigma-2\sigma\alpha)}-\frac{1-2\alpha}{2(1-\alpha)}}|P_{w_0}|.
	\end{align*}
	For another, the following refinement estimates for \eqref{Eq.First-Order.TPE} with $\sigma\geqslant1$ and $\alpha\in(1/2,1]$ hold:
	\begin{align*}
	\left\|(\chi_{\intt}(D)w-\widetilde{S}_{\intt})(t,\cdot)\right\|_{\dot{H}^{s_0}}\lesssim&(1+t)^{-\frac{n+2(s_0+\kappa)}{2(6\sigma\alpha-2\sigma)}-\frac{2\alpha-1}{2(3\alpha-1)}}\|w_0\|_{L^{1,\kappa}}\\
	&+(1+t)^{-\frac{n+2s_0}{2(6\sigma\alpha-2\sigma)}-\frac{2\alpha-1}{2(3\alpha-1)}}|P_{w_0}|.
	\end{align*}
	Here we assume $0\leqslant s_0\leqslant s$.
\end{theorem}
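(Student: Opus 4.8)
The plan is to mimic, almost verbatim, the structure of the proof of Theorem \ref{Thm.Asymptotic.Profile.}, but now localized to small frequencies only, since for $\alpha\in[1/3,1]$ the large-frequency part decays exponentially (Proposition \ref{Prop.2.4} and Lemma \ref{Lemma.2} with $\theta\geqslant 0$) and needs no refinement. Concretely, I would treat the two sub-cases separately. For $\alpha\in[1/3,1/2)$ the representation of $\chi_{\intt}(\xi)\hat{w}(t,\xi)$ is exactly the one from Proposition \ref{Prop.2.1}, with the diagonalizers $N_1$, $I_{3\times3}+N_2(|\xi|)$, $I_{3\times3}+N_3(|\xi|)$ and the eigenvalues $\lambda_j(|\xi|)=\tilde\lambda_j(|\xi|)+\ml{O}(|\xi|^{3\sigma-4\sigma\alpha})$; for $\alpha\in(1/2,1]$ it is the one from Proposition \ref{Prop.2.2}, with diagonalizers $I_{3\times3}+N_4(|\xi|)$, $I_{3\times3}+N_5(|\xi|)$, $I_{3\times3}+N_6(|\xi|)$ and $\lambda_j(|\xi|)=\bar\lambda_j(|\xi|)+\ml{O}(|\xi|^{8\sigma\alpha-3\sigma})$. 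In each case I decompose $\chi_{\intt}(\xi)\hat{w}$ into a leading term $J_{\intt,1}$ (the one whose amplitude matrices reduce to those appearing in $\hat S_{\intt}$ or $\hat{\widetilde S}_{\intt}$) plus correction terms $J_{\intt,2}+J_{\intt,3}$ carrying an extra factor $N_3(|\xi|)$ (resp. $N_6(|\xi|)$), exactly as in the proof of Theorem \ref{Thm.Asymptotic.Profile.}.

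The core estimate is the elementary integral identity
\begin{align*}
e^{\tilde\lambda_j(|\xi|)t-g_j(|\xi|)t}-e^{\tilde\lambda_j(|\xi|)t}=-g_j(|\xi|)\,t\,e^{\tilde\lambda_j(|\xi|)t}\int_0^1 e^{-g_j(|\xi|)t\tau}\,d\tau,
\end{align*}
with remainder $g_j(|\xi|)=\ml{O}(|\xi|^{3\sigma-4\sigma\alpha})$ when $\alpha\in[1/3,1/2)$ (so that $3\sigma-4\sigma\alpha$ exceeds $2\sigma-2\sigma\alpha$, the order governing $\rho(|\xi|)$ for small $|\xi|$) and $g_j(|\xi|)=\ml{O}(|\xi|^{8\sigma\alpha-3\sigma})$ when $\alpha\in(1/2,1]$ (again a strictly higher power than $6\sigma\alpha-2\sigma$). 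This yields the pointwise bound
\begin{align*}
\left|\chi_{\intt}(\xi)J_{\intt,1}(t,|\xi|)-\hat S_{\intt}(t,\xi)\right|\lesssim (1+t)\,|\xi|^{3\sigma-4\sigma\alpha}e^{-c|\xi|^{2\sigma-2\sigma\alpha}t}|\hat{w}_0(\xi)|
\end{align*}
(with the obvious analogue for the other sub-case), and similarly the corrections $J_{\intt,2}+J_{\intt,3}$ carry a gain of $|\xi|^{2\sigma-4\sigma\alpha}$ (resp. $|\xi|^{6\sigma\alpha-3\sigma}$) from the extra $N_3$ (resp. $N_6$) factor. Feeding these into Lemma \ref{Lemma.1} with $\theta=2\sigma-2\sigma\alpha$ (resp. $\theta=6\sigma\alpha-2\sigma$), the powers of $(1+t)$ add up to the claimed $-\frac{n+2(s_0+\kappa)}{2(2\sigma-2\sigma\alpha)}-\frac{1-2\alpha}{2(1-\alpha)}$ (resp. $-\frac{n+2(s_0+\kappa)}{2(6\sigma\alpha-2\sigma)}-\frac{2\alpha-1}{2(3\alpha-1)}$) and identically for the $|P_{w_0}|$ terms; one checks that $\frac{2(3\sigma-4\sigma\alpha)}{2(2\sigma-2\sigma\alpha)}-1=\frac{\sigma-2\sigma\alpha}{2\sigma-2\sigma\alpha}=\frac{1-2\alpha}{2(1-\alpha)}$ and, for the large-$\alpha$ case, $\frac{2(8\sigma\alpha-3\sigma)}{2(6\sigma\alpha-2\sigma)}-1=\frac{2\sigma\alpha-\sigma}{6\sigma\alpha-2\sigma}=\frac{2\alpha-1}{2(3\alpha-1)}$, so the corrections are always dominated by the subtracted leading term. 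Since we only claim an estimate for $\chi_{\intt}(D)w$, there is nothing further to do: no middle- or large-frequency bookkeeping is needed, and no regularity assumption beyond $w_0\in L^{1,\kappa}$ enters.

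The only genuinely case-specific point — and the main thing to be careful about — is verifying that the amplitude matrices $(I_{3\times3}+N_2(|\xi|))^{-1}N_1^{-1}$ in \eqref{Eq reference system 1} (for $\alpha\in[1/3,1/2)$) and $(I_{3\times3}+N_5(|\xi|))^{-1}(I_{3\times3}+N_4(|\xi|))^{-1}$ in \eqref{Eq reference system 2} (for $\alpha\in(1/2,1]$) are exactly the ones produced by peeling off the outermost diagonalizer in Propositions \ref{Prop.2.1}–\ref{Prop.2.2}, so that $S_{\intt}$ and $\widetilde S_{\intt}$ literally coincide with $\chi_{\intt}(D)$ times $\ml{F}^{-1}$ of $\hat J_{\intt,1}$ with $\lambda_j$ replaced by $\tilde\lambda_j$ (resp. $\bar\lambda_j$); this is a direct unwinding of the definitions already recorded in the text, together with the invertibility of $I_{3\times3}+N_k(|\xi|)$ for $|\xi|\leqslant\varepsilon\ll1$, which is immediate since $N_k(|\xi|)\to 0$. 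I would also note that boundedness of the matrices $N_1$, $(I_{3\times3}+N_k(|\xi|))^{\pm1}$ uniformly on $\ml{Z}_{\intt}(\varepsilon)$ (and of their derivatives, for the $\dot H^{s_0}$ norm) is what allows one to pull them outside the $L^2$ norm before invoking Lemma \ref{Lemma.1}; these are exactly the properties stated in Propositions \ref{Prop.2.1} and \ref{Prop.2.2}. No obstacle of substance arises beyond this bookkeeping — the estimate is a strict specialization of the small-frequency half of Theorem \ref{Thm.Asymptotic.Profile.}, valid on the larger range $\alpha\in[1/3,1/2)\cup(1/2,1]$ precisely because the exponents $3\sigma-4\sigma\alpha$ and $8\sigma\alpha-3\sigma$ remain strictly above the respective thresholds $2\sigma-2\sigma\alpha$ and $6\sigma\alpha-2\sigma$ throughout that range.
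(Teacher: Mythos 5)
Your proposal is correct and follows essentially the same route as the paper, whose proof of this theorem consists precisely of the remark that one repeats the small-frequency half of the proof of Theorem \ref{Thm.Asymptotic.Profile.} (the decomposition into $J_{\intt,1}$ plus $N_3$- or $N_6$-weighted corrections, the integral identity for $e^{\tilde\lambda_j t-g_jt}-e^{\tilde\lambda_jt}$, and Lemma \ref{Lemma.1}) with the representations of Proposition \ref{Prop.2.1} for $\alpha\in[1/3,1/2)$ and Proposition \ref{Prop.2.2} for $\alpha\in(1/2,1]$. Your exponent checks $\tfrac{3\sigma-4\sigma\alpha}{2\sigma-2\sigma\alpha}-1=\tfrac{1-2\alpha}{2(1-\alpha)}$ and $\tfrac{8\sigma\alpha-3\sigma}{6\sigma\alpha-2\sigma}-1=\tfrac{2\alpha-1}{2(3\alpha-1)}$ are exactly the arithmetic the paper leaves implicit, and your observation that no large-frequency bookkeeping is needed matches the paper's rationale for restricting to $\chi_{\intt}(D)w$.
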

\begin{proof}
We may follow the proof of Theorem \ref{Thm.Asymptotic.Profile.} for small frequencies and apply Lemma \ref{Lemma.1} to complete the proof immediately.
\end{proof}
\begin{remark}
	Let us compare Theorem \ref{Thm.Solution.Esitmate} with Theorem \ref{Thm.Asy.Pro.2} in the case when $\alpha\in[1/3,1]$. By subtracting the function $S_{\intt}(t,x)$ if $\alpha\in[1/3,1/2)$ and the function $\widetilde{S}_{\intt}(t,x)$ if $\alpha\in(1/2,1]$, we find that the decay rate can be improved by $-\frac{1-2\alpha}{2(1-\alpha)}$ if $\alpha\in[1/3,1/2)$, and $-\frac{2\alpha-1}{2(3\alpha-1)}$ if $\alpha\in(1/2,1]$.
\end{remark}

To end this section, we summarize the results in Theorems \ref{Thm.Asymptotic.Profile.} and \ref{Thm.Asy.Pro.2} to show the asymptotic profiles of solutions, especially, the improvements of decay rate and regularity of initial data.
 \begin{figure}[http]
	\centering
\begin{tikzpicture}
\draw[thick] (-0.2,0) -- (12,0) node[below] {$1$};
\node[left] at (0,-0.2) {{$0$}};
\node[below] at (4,0) {{$1/3$}};
\node[below] at (6,0) {{$1/2$}};
\draw [decorate, decoration={brace, amplitude=5pt}]  (-0.2,0.5) -- (5.8,0.5);
\draw [decorate, decoration={brace, amplitude=5pt}]  (6.2,0.5) -- (12,0.5);
\draw [decorate, decoration={brace, amplitude=5pt}]  (12,-0.5)--(4.2,-0.5) ;
\draw [decorate, decoration={brace, amplitude=5pt}]  (3.8,-0.5)--(-0.2,-0.5) ;
\node[above] at (3,0.6) {decay rate improved $-\frac{1-2\alpha}{2(1-\alpha)}$};
\node[above] at (9,0.6) {decay rate improved $-\frac{2\alpha-1}{2(3\alpha-1)}$};
\node[above] at (2,-1.4) {regularity improved $\sigma-2\sigma\alpha$};
\node[above] at (8,-1.4) {regularity no improvement};
\node[left] at (-0.5,0) {Value of $\alpha$:};
\draw[fill] (-0.2, 0) circle[radius=1pt];
\draw[fill] (4, 0) circle[radius=1pt];
\draw[fill] (6, 0) circle[radius=1pt];
\draw[fill] (12, 0) circle[radius=1pt];
\end{tikzpicture}
	\caption{Improvements of decay rate and regularity of initial data}
\label{imgg}
\end{figure}
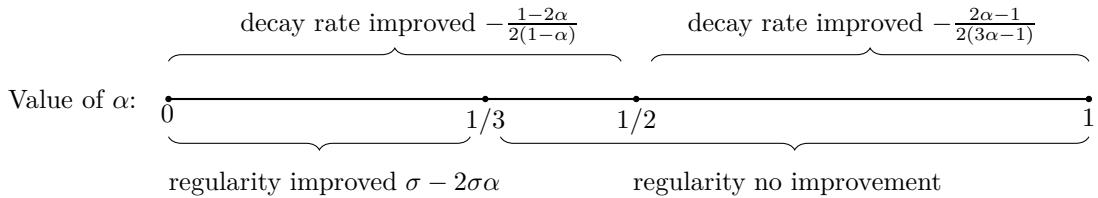

\section{Generalized thermoelastic plate equations with structural damping}\label{Sec.GTPESD}
Throughout this section, we study qualitative properties of solutions, including decay properties and asymptotic profiles in a framework of weighted $L^1$ spaces, for generalized thermoelastic plate equations with additional structural damping \eqref{Eq.Gen.Ther.Plate.Eq.Add.Damping}. Our main purpose in this section is to understand the influence of structural damping term on these qualitative properties.

To do this, we may introduce the quantity $w=w(t,x)$ again such that
\begin{align*}
w:=\left(u_t+i(-\Delta)^{\sigma/2}u,u_t-i(-\Delta)^{\sigma/2}u,v\right)^{\mathrm{T}}.
\end{align*}
Thus, the following first-order system is derived:
\begin{equation}\label{Eq.First-Order.TPE.Add.Damping}
\left\{
\begin{aligned}
&w_t-D_0(-\Delta)^{\sigma/2}w-D_1(-\Delta)^{\sigma\alpha}w=0,&&x\in\mb{R}^n,\,\,t>0,\\
&w(0,x)=w_0(x),&&x\in\mb{R}^n,
\end{aligned}
\right.
\end{equation}
where
\begin{align*}
D_0=\left(
{\begin{array}{*{20}c}
	i-\frac{1}{2} & -\frac{1}{2} & 0\\
	-\frac{1}{2} & -i-\frac{1}{2} & 0\\
	0 & 0 & 0
	\end{array}}
\right)\quad\text{and}\quad D_1=\left(
{\begin{array}{*{20}c}
	0 & 0 & 1\\
	0 & 0 & 1\\
	-\frac{1}{2} & -\frac{1}{2} & -1
	\end{array}}
\right).
\end{align*}
\begin{remark}
Similar as the discussion in Remark \ref{Remark.KS.Condition}, the general theory on decay properties proposed in \cite{UmedaKawashimaShizuta1984} is still not applicable to \eqref{Eq.First-Order.TPE.Add.Damping}, even in the special case when $\sigma=1$ and $\alpha=0$.
\end{remark}
\begin{remark}
Comparing \eqref{Eq.First-Order.TPE} with \eqref{Eq.First-Order.TPE.Add.Damping}, although there exists only small difference between the coefficient matrices $B_0$ and $D_0$, it has great influence on qualitative properties of solutions. We will show the influence later.
\end{remark}

The remaining part of this section is organized as follows. We derive representations of solutions in Subsection \ref{SubSec.Diag.Add.Damp}. Then, by applying these formulas of solutions, decay properties and asymptotic profiles of solutions are obtained in Subsections \ref{SubSec.DecayProp.Add.Damp} and \ref{SubSec.Asym.Prof.Add.Damp}, respectively.

\subsection{Representation of solutions}\label{SubSec.Diag.Add.Damp}
Let us use the partial Fourier transformation to \eqref{Eq.First-Order.TPE.Add.Damping}, we obtain
\begin{equation}\label{Eq.Fourier.First-Order.System.Add}
\left\{
\begin{aligned}
&\hat{w}_t-D_0|\xi|^{\sigma}\hat{w}-D_1|\xi|^{2\sigma\alpha}\hat{w}=0,&&\xi\in\mb{R}^n,\,\,t>0,\\
&\hat{w}(0,\xi)=\hat{w}_0(\xi),&&\xi\in\mb{R}^n.
\end{aligned}
\right.
\end{equation}
Similarly, we now divide the discussion into four cases, concerning about the influence on $|\xi|$.
\begin{itemize}
	\item Case 5.1: We consider $\alpha\in[0,1/2)$ for $\xi\in \ml{Z}_{\intt}(\varepsilon)$, or $\alpha\in(1/2,1]$ for $\xi\in \ml{Z}_{\extt}(N)$.
	\item Case 5.2: We consider $\alpha\in[0,1/2)$ for $\xi\in \ml{Z}_{\extt}(N)$, or $\alpha\in(1/2,1]$ for $\xi\in \ml{Z}_{\intt}(\varepsilon)$.
	\item Case 5.3: We consider $\alpha=1/2$ for all $\xi\in\mb{R}^n$.
	\item Case 5.4: We consider $\alpha\in[0,1/2)\cup(1/2,1]$ for $\xi\in \ml{Z}_{\midd}(\varepsilon,N)$.
\end{itemize}

By using the similar techniques, i.e. diagonalization procedure on Cases 5.1 as well as 5.2, direct diagonalization on Case 5.3, and contradiction argument on Case 5.4, we are able to derive the next four propositions. Hence, we omit the details of the proofs.

\begin{prop}\label{Prop.3.1}
	The eigenvalues $\mu_j=\mu_j(|\xi|)$ of the coefficient matrix
	\begin{align*}
	D(|\xi|;\sigma,\alpha)=D_0|\xi|^{\sigma}+D_1|\xi|^{2\sigma\alpha}
	\end{align*}
	from \eqref{Eq.Fourier.First-Order.System.Add} behavior if $\alpha\in[0,1/2)$ for $\xi\in\ml{Z}_{\intt}(\varepsilon)$, or $\alpha\in(1/2,1]$ for $\xi\in\ml{Z}_{\extt}(N)$ as
	\begin{align*}
	\mu_1(|\xi|)&=-|\xi|^{2\sigma-2\sigma\alpha}+\ml{O}\left(|\xi|^{3\sigma-4\sigma\alpha}\right),\\
	\mu_2(|\xi|)&=-\left(\tfrac{1}{2}+\tfrac{\sqrt{3}}{2}i\right)|\xi|^{2\sigma\alpha}-\left(\tfrac{1}{2}+\tfrac{\sqrt{3}}{6}i\right)|\xi|^{\sigma}+\left(\tfrac{1}{2}-\tfrac{\sqrt{3}}{18}i\right)|\xi|^{2\sigma-2\sigma\alpha}+\ml{O}\left(|\xi|^{3\sigma-4\sigma\alpha}\right),\\
	\mu_3(|\xi|)&=-\left(\tfrac{1}{2}-\tfrac{\sqrt{3}}{2}i\right)|\xi|^{2\sigma\alpha}-\left(\tfrac{1}{2}-\tfrac{\sqrt{3}}{6}i\right)|\xi|^{\sigma}+\left(\tfrac{1}{2}+\tfrac{\sqrt{3}}{18}i\right)|\xi|^{2\sigma-2\sigma\alpha}+\ml{O}\left(|\xi|^{3\sigma-4\sigma\alpha}\right).
	\end{align*}
	Furthermore, the solution to \eqref{Eq.Fourier.First-Order.System.Add} has the representation for $\xi\in\ml{Z}_{\intt}(\varepsilon)$ and $\alpha\in[0,1/2)$ that
	\begin{align*}
	\hat{w}(t,\xi)=M_{\alpha,\intt}(|\xi|)\diag\left(e^{\mu_j(|\xi|)t}\right)_{j=1}^3M_{\alpha,\intt}^{-1}(|\xi|)\hat{w}_0(\xi),
	\end{align*}
	and the representation for $\xi\in\ml{Z}_{\extt}(N)$ and $\alpha\in(1/2,1]$ that
	\begin{align*}
	\hat{w}(t,\xi)&=M_{\alpha,\extt}(|\xi|)\diag\left(e^{\mu_j(|\xi|)t}\right)_{j=1}^3M_{\alpha,\extt}^{-1}(|\xi|)\hat{w}_0(\xi),
	\end{align*}
	where
	\begin{align*}
	M_{\alpha,\intt}(|\xi|):&=M_1(I_{3\times 3}+M_2(|\xi|))(I_{3\times 3}+M_3(|\xi|)),&&\text{when }\alpha\in[0,1/2),\\
	M_{\alpha,\extt}(|\xi|):&=M_1(I_{3\times 3}+M_2(|\xi|))(I_{3\times 3}+M_3(|\xi|)),&&\text{when }\alpha\in(1/2,1],
	\end{align*}
	with the matrices
	\begin{align*}
	M_1:&=\left(
	{\begin{array}{*{20}c}
		-1 & \frac{i\sqrt{3}-1}{2} & \frac{-i\sqrt{3}-1}{2}\\
		1 & \frac{i\sqrt{3}-1}{2} & \frac{-i\sqrt{3}-1}{2}\\
		0 & 1 & 1
		\end{array}}
	\right),\\
	M_2(|\xi|):&=|\xi|^{\sigma-2\sigma\alpha}\left(
	{\begin{array}{*{20}c}
		0 & -\frac{\sqrt{3}+i}{1+\sqrt{3}i} & \frac{\sqrt{3}-i}{1-\sqrt{3}i}\\
		-\frac{2\sqrt{3}}{3(1+\sqrt{3}i)} & 0 & \frac{\sqrt{3}-i}{6i}\\
		\frac{2\sqrt{3}}{3(1-\sqrt{3}i)} & -\frac{\sqrt{3}+i}{6i} & 0
		\end{array}}
	\right),\\
	 M_3(|\xi|):&=|\xi|^{2\sigma-2\sigma\alpha}\left(
	{\begin{array}{*{20}c}
		0 & \frac{2(\sqrt{3}+i)}{3(1+\sqrt{3}i)} & \frac{2(i-\sqrt{3})}{3(1-\sqrt{3}i)}\\
		\frac{2\sqrt{3}}{3(1+\sqrt{3}i)} & 0 & -\frac{2\sqrt{3}+i}{9i}\\
		-\frac{2\sqrt{3}}{3(1-\sqrt{3}i)} & \frac{2\sqrt{3}-i}{9i} & 0
		\end{array}}
	\right).
	\end{align*}
\end{prop}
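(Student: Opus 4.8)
\textbf{Proof proposal for Proposition \ref{Prop.3.1}.}

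The plan is to repeat the multistep diagonalization carried out for Case 2.1 in Proposition \ref{Prop.2.1}, now with the modified coefficient matrices $D_0$ and $D_1$ in place of $B_0$ and $B_1$. Since in Case 5.1 we are in the regime $\alpha\in[0,1/2)$ with $\xi\in\ml{Z}_{\intt}(\varepsilon)$ (or $\alpha\in(1/2,1]$ with $\xi\in\ml{Z}_{\extt}(N)$), the term $D_1|\xi|^{2\sigma\alpha}$ dominates $D_0|\xi|^{\sigma}$, so I would start the diagonalization from $D_1$. First I would note that $D_1=B_1$, so the diagonalizing matrix for the leading term is unchanged: I take $M_1=N_1$ from \eqref{Matr.N1}, and the first step produces the same $\Lambda_1(|\xi|)=|\xi|^{2\sigma\alpha}\diag\big(0,-(\tfrac12+i\tfrac{\sqrt3}{2}),-(\tfrac12-i\tfrac{\sqrt3}{2})\big)$, together with the perturbation $M_1^{-1}D_0M_1|\xi|^{\sigma}$ which now differs from the $B_0$ case because $D_0\neq B_0$.

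Next I would carry out the second step: choose $M_2(|\xi|)=\ml{O}(|\xi|^{\sigma-2\sigma\alpha})$ so that the commutator equation $[M_2(|\xi|),\Lambda_1(|\xi|)]$ kills the off-diagonal part of $M_1^{-1}D_0M_1|\xi|^{\sigma}$; the diagonal part of $M_1^{-1}D_0M_1|\xi|^{\sigma}$ survives and contributes the new $|\xi|^{\sigma}$-order terms $-(\tfrac12+\tfrac{\sqrt3}{6}i)|\xi|^{\sigma}$ and $-(\tfrac12-\tfrac{\sqrt3}{6}i)|\xi|^{\sigma}$ appearing in $\mu_2,\mu_3$ (note there is no $|\xi|^{\sigma}$ term in $\mu_1$, consistent with the fact that in the $B$-case $B_0$ was traceless-diagonal in the relevant block). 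Then a third step with $M_3(|\xi|)=\ml{O}(|\xi|^{2\sigma-2\sigma\alpha})$ removes the next off-diagonal contribution and leaves the diagonal matrix $\Lambda_3(|\xi|)=|\xi|^{2\sigma-2\sigma\alpha}\diag\big(-1,\tfrac12-\tfrac{\sqrt3}{18}i,\tfrac12+\tfrac{\sqrt3}{18}i\big)$, which supplies the $|\xi|^{2\sigma-2\sigma\alpha}$-order terms. Adding up $\Lambda_1+\Lambda_2+\Lambda_3$ (where $\Lambda_2$ carries the $|\xi|^{\sigma}$ diagonal) gives exactly the stated expansions for $\mu_1,\mu_2,\mu_3$, with remainder $\ml{O}(|\xi|^{3\sigma-4\sigma\alpha})$ coming from the residual matrix $\widetilde R(|\xi|)$, which is of this order because each $M_k$ contributes a factor $|\xi|^{\sigma-2\sigma\alpha}$ or higher. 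Since the three eigenvalues are pairwise distinct for $|\xi|$ in the relevant zone, the representation $\hat w(t,\xi)=M_{\alpha,\intt}(|\xi|)\diag(e^{\mu_j t})M_{\alpha,\intt}^{-1}(|\xi|)\hat w_0(\xi)$ with $M_{\alpha,\intt}=M_1(I+M_2)(I+M_3)$ follows by composing the three substitutions, and the same argument gives the $\ml{Z}_{\extt}(N)$ version for $\alpha\in(1/2,1]$.

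The main obstacle is purely computational bookkeeping rather than conceptual: one must verify that the specific matrices $M_2(|\xi|)$ and $M_3(|\xi|)$ written in the statement do solve the respective commutator (Sylvester) equations $[M_2,\Lambda_1]=M_1^{-1}D_0M_1|\xi|^{\sigma}-(\text{its diagonal})$ and the analogous one at the third step, and that the diagonal leftovers match the claimed coefficients $\tfrac{\sqrt3}{6}$, $\tfrac{\sqrt3}{18}$ exactly. Because $\Lambda_1$ has a zero eigenvalue, one must also check that the off-diagonal entries to be removed at each step have no component along the corresponding resonant direction — this is where the particular structure of $D_0$ (its $(3,3)$ entry being $0$, matching the zero eigenvalue of $\Lambda_1$) is used. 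Since these verifications are routine matrix algebra entirely parallel to Case 2.1, and since the distinctness of eigenvalues and the exponential estimate on middle frequencies (Case 5.4) go through verbatim as in Proposition \ref{Prop.2.4}, the details are omitted as announced.
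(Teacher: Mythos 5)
Your proposal is correct and follows exactly the route the paper intends: Proposition \ref{Prop.3.1} is stated there without proof, with the explicit remark that it follows from the same multistep diagonalization as in Case 2.1 (Proposition \ref{Prop.2.1}), which is precisely what you reconstruct — correctly observing that $D_1=B_1$ leaves the first step unchanged while the non-zero diagonal of $M_1^{-1}D_0M_1$ generates the new $|\xi|^{\sigma}$-order terms in $\mu_2,\mu_3$ (and its vanishing $(1,1)$ entry is why $\mu_1$ has none). The only quibble is your closing attribution of that vanishing to the $(3,3)$ entry of $D_0$: the zero eigenvalue of $\Lambda_1$ corresponds to the first column $(-1,1,0)^{\mathrm{T}}$ of $M_1$, so the relevant cancellation takes place in the upper-left $2\times 2$ block of $D_0$ rather than in its $(3,3)$ entry, but this does not affect the argument.
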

\begin{prop}\label{Prop.3.2}
	The eigenvalues $\mu_j=\mu_j(|\xi|)$ of the coefficient matrix
	\begin{align*}
	D(|\xi|;\sigma,\alpha)=D_0|\xi|^{\sigma}+D_1|\xi|^{2\sigma\alpha}
	\end{align*}
	from \eqref{Eq.Fourier.First-Order.System.Add} behavior if $\alpha\in[0,1/2)$ for $\xi\in\ml{Z}_{\extt}(N)$, or $\alpha\in(1/2,1]$ for $\xi\in\ml{Z}_{\intt}(\varepsilon)$ as
	\begin{align*}
	\mu_{1}(|\xi|)&=-|\xi|^{2\sigma\alpha}+\ml{O}\left(|\xi|^{4\sigma\alpha-\sigma}\right),\\
	\mu_{2}(|\xi|)&=-\left(\tfrac{1}{2}+\tfrac{\sqrt{3}}{2}i\right)|\xi|^{\sigma}+\ml{O}\left(|\xi|^{4\sigma\alpha-\sigma}\right),\\
	\mu_{3}(|\xi|)&=-\left(\tfrac{1}{2}-\tfrac{\sqrt{3}}{2}i\right)|\xi|^{\sigma}+\ml{O}\left(|\xi|^{4\sigma\alpha-\sigma}\right).
	\end{align*}
	Furthermore, the solution to \eqref{Eq.Fourier.First-Order.System.Add} has the representation for $\xi\in\ml{Z}_{\intt}(\varepsilon)$ and $\alpha\in(1/2,1]$ that
	\begin{align*}
	\hat{w}(t,\xi)&=M_{\alpha,\intt}(|\xi|)\diag\left(e^{\mu_j(|\xi|)t}\right)_{j=1}^3M_{\alpha,\intt}^{-1}(|\xi|)\hat{w}_0(\xi),
	\end{align*}
	and the representation for $\xi\in\ml{Z}_{\extt}(N)$ and $\alpha\in[0,1/2)$ that
	\begin{align*}
	\hat{w}(t,\xi)&=M_{\alpha,\extt}(|\xi|)\diag\left(e^{\mu_j(|\xi|)t}\right)_{j=1}^3M_{\alpha,\extt}^{-1}(|\xi|)\hat{w}_0(\xi),
	\end{align*}
	where
	\begin{align*}
	&M_{\alpha,\intt}(|\xi|):=M_4(I_{3\times 3}+M_5(|\xi|)),&&\text{when }\alpha\in(1/2,1],\\
	&M_{\alpha,\extt}(|\xi|):=M_4(I_{3\times 3}+M_5(|\xi|)),&&\text{when }\alpha\in[0,1/2),
	\end{align*}
	with the matrices
		\begin{align*}
	M_4:&=\left(
	{\begin{array}{*{20}c}
		0 & i(\sqrt{3}-2) & -i(\sqrt{3}+2)\\
		0 & 1 & 1\\
		1 & 0 & 0
		\end{array}}
	\right),\\
	M_5(|\xi|):&=|\xi|^{2\sigma\alpha}\left(
	{\begin{array}{*{20}c}
		0 & \frac{(\sqrt{3}-2)i+1}{\sqrt{3}i+1} & \frac{(\sqrt{3}+2)i-1}{\sqrt{3}i-1}\\
		\frac{(2-i)\sqrt{3}+3}{3(1+\sqrt{3}i)} & 0 & 0\\
		\frac{(-2+i)\sqrt{3}+3}{3(1-\sqrt{3}i)} & 0 & 0
		\end{array}}
	\right).
		\end{align*}
\end{prop}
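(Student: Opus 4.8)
The plan is to run the multistep diagonalization/WKB scheme of Section~\ref{Subsec.Diag.TPE}, following the treatment of Case~2.2 that led to Proposition~\ref{Prop.2.2}, the single structural novelty being that $D_0$ is no longer a diagonal matrix. After passing to \eqref{Eq.Fourier.First-Order.System.Add}, observe that in the regime of the statement (Case~5.2: $\alpha\in[0,1/2)$ for $\xi\in\ml{Z}_{\extt}(N)$, or $\alpha\in(1/2,1]$ for $\xi\in\ml{Z}_{\intt}(\varepsilon)$) one has $|\xi|^{\sigma}\gg|\xi|^{2\sigma\alpha}$, so $D_0|\xi|^{\sigma}$ is the dominant part; hence, in contrast with Proposition~\ref{Prop.3.1}, the first task is to diagonalize $D_0$ itself rather than to perturb a ready-made diagonal matrix.

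First I would compute the spectrum of $D_0$. Its lower $1\times1$ block gives the eigenvalue $0$, while its upper $2\times2$ block has trace $-1$ and determinant $1$, hence characteristic polynomial $\lambda^2+\lambda+1$ with the two distinct roots $-\tfrac{1}{2}\pm\tfrac{\sqrt{3}}{2}i$. Thus $D_0$ has three pairwise distinct eigenvalues, and arranging the associated eigenvectors as columns produces precisely the constant invertible matrix $M_4$ in the statement, with $M_4^{-1}D_0M_4=\diag\big(0,-\tfrac{1}{2}-\tfrac{\sqrt{3}}{2}i,-\tfrac{1}{2}+\tfrac{\sqrt{3}}{2}i\big)$. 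Setting $\hat{w}^{(0)}:=M_4^{-1}\hat{w}$ transforms \eqref{Eq.Fourier.First-Order.System.Add} into a system whose $|\xi|^{\sigma}$-coefficient is this diagonal matrix and whose $|\xi|^{2\sigma\alpha}$-coefficient is $M_4^{-1}D_1M_4$; a direct computation shows the only nonzero entries of $M_4^{-1}D_1M_4$ lie in its first row and first column, with $(M_4^{-1}D_1M_4)_{11}=-1$, which already produces the leading term $-|\xi|^{2\sigma\alpha}$ in the diagonal slot attached to the zero eigenvalue of $D_0$ --- this slot becomes $\mu_1$.

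Next I would perform one correction step: set $\hat{w}^{(1)}:=(I_{3\times 3}+M_5(|\xi|))^{-1}\hat{w}^{(0)}$, where $M_5(|\xi|)\to0$ is determined by requiring that $[M_5(|\xi|),\Lambda_1(|\xi|)]$ cancels the off-diagonal part of $M_4^{-1}D_1M_4\,|\xi|^{2\sigma\alpha}$, with $\Lambda_1(|\xi|)=|\xi|^{\sigma}M_4^{-1}D_0M_4$. Because the eigenvalues of $D_0$ are pairwise distinct, this Sylvester-type equation is uniquely solvable and yields the matrix $M_5(|\xi|)$ displayed in the statement, whose sparsity pattern coincides with that of the off-diagonal part of $M_4^{-1}D_1M_4$. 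After this step the system reads $\hat{w}^{(1)}_t-\Lambda(|\xi|)\hat{w}^{(1)}-R(|\xi|)\hat{w}^{(1)}=0$ with $\Lambda(|\xi|)$ diagonal and, in the present regime, $R(|\xi|)=\ml{O}\big(|\xi|^{4\sigma\alpha-\sigma}\big)$; reading off the three diagonal entries of $\Lambda(|\xi|)$ and absorbing $R$ into the error gives exactly the claimed expansions of $\mu_1,\mu_2,\mu_3$. Since these eigenvalues remain pairwise distinct on the zone in question, the composition $M_{\alpha,\intt}(|\xi|)=M_{\alpha,\extt}(|\xi|)=M_4(I_{3\times 3}+M_5(|\xi|))$ is invertible there, and unwinding $\hat{w}=M_4(I_{3\times 3}+M_5(|\xi|))\hat{w}^{(1)}$ together with $\hat{w}^{(1)}(t,\xi)=\diag\big(e^{\mu_j(|\xi|)t}\big)_{j=1}^{3}\hat{w}^{(1)}(0,\xi)$ gives the stated representation of $\hat{w}(t,\xi)$.

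The main obstacle is the explicit bookkeeping of the diagonalization algebra: one has to verify that the specific entries of $M_4$ and $M_5(|\xi|)$ in the statement really do diagonalize $D_0$ and solve the correction equation, and that every discarded term is genuinely $\ml{O}(|\xi|^{4\sigma\alpha-\sigma})$ \emph{uniformly} over the relevant frequency zone --- this is the same type of commutator estimate as in \cite{Jachmann2008,Chen2019TEP}. A secondary point is to confirm that the three eigenvalues stay separated throughout that zone: the perturbed zero eigenvalue behaves like $-|\xi|^{2\sigma\alpha}$, real and nonzero, whereas $\mu_2,\mu_3\approx-\tfrac{1}{2}|\xi|^{\sigma}\pm\cdots$ have nonvanishing imaginary parts, so no collision occurs; this secures the invertibility of $M_{\alpha,\intt}(|\xi|)$ and $M_{\alpha,\extt}(|\xi|)$ and hence the solution formula, while on the complementary middle zone the needed separation is obtained, as for Proposition~\ref{Prop.2.4}, by the compactness/contradiction argument.
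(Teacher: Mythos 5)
Your proposal is correct and follows essentially the route the paper itself indicates (the details are omitted in Section~\ref{Sec.GTPESD}, which states only that the techniques of Section~\ref{Subsec.Diag.TPE} carry over): since $D_0|\xi|^{\sigma}$ dominates but $D_0$ is no longer diagonal, one first diagonalizes $D_0$ by the constant eigenvector matrix $M_4$ (its eigenvalues $0$ and $-\tfrac{1}{2}\pm\tfrac{\sqrt{3}}{2}i$ being pairwise distinct), then removes the off-diagonal part of $M_4^{-1}D_1M_4\,|\xi|^{2\sigma\alpha}$ by one Sylvester-type commutator step, exactly as in the proof of Proposition~\ref{Prop.2.2}. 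One remark: carrying out your correction step literally gives $M_5(|\xi|)=\ml{O}\left(|\xi|^{2\sigma\alpha-\sigma}\right)$, i.e.\ the displayed matrix with prefactor $|\xi|^{2\sigma\alpha-\sigma}$ rather than $|\xi|^{2\sigma\alpha}$ --- this is the scaling that actually tends to zero in both frequency regimes and is consistent with $N_4(|\xi|)$ in Proposition~\ref{Prop.2.2}, so the exponent in the statement appears to be a typo and your derivation, not the displayed prefactor, is the correct one.
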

\begin{prop}\label{Prop.3.3}
	Let us consider $\alpha=1/2$ for all $\xi\in \mb{R}^n$. After one step of the diagonalization procedure, the starting Cauchy problem \eqref{Eq.Fourier.First-Order.System.Add} can be transformed to
	\begin{align*}
	\left\{
	\begin{aligned}
	&\hat{w}_t^{(1)}-\widetilde{\Lambda}_1(|\xi|)\hat{w}^{(1)}=0,&&\xi\in\mb{R}^n,\,\,t>0,\\
	&\hat{w}^{(1)}(0,\xi)=\hat{w}_0^{(1)}(\xi),&&\xi\in\mb{R}^n,
	\end{aligned}
	\right.
	\end{align*}
	with the diagonal matrix $\widetilde{\Lambda}_1(|\xi|)=|\xi|^{\sigma}\diag(-y_4,-y_5,-y_6)$, where the constants $y_4$, $y_5$ and $y_6$ are defined by
	\begin{align*}
	y_4=z_3-\tfrac{5}{9z_3}+\tfrac{2}{3},\,\,\,\,y_5=z_4-\tfrac{5}{9z_4}+\tfrac{2}{3},\,\,\,\,y_6=z_5-\tfrac{5}{9z_5}+\tfrac{2}{3},
	\end{align*}
	where the values of $z_3$, $z_4$ and $z_5$ are given by
	\begin{align*}
	z_3=\tfrac{1}{3}\sqrt[3]{-\tfrac{11}{2}+\tfrac{3}{2}\sqrt{69}},\,\,\,\,z_{4}=\left(-\tfrac{1}{2}+\tfrac{\sqrt{3}}{2}i\right)z_3,\,\,\,\,z_{5}=\left(-\tfrac{1}{2}-\tfrac{\sqrt{3}}{2}i\right)z_3.
	\end{align*}
\end{prop}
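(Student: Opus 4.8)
The plan is to diagonalize the matrix $D_0+D_1$ (both with symbol $|\xi|^\sigma$ since $\alpha=1/2$) exactly as in the proof of Proposition \ref{Prop.2.3}, treating $(D_0+D_1)|\xi|^\sigma$ as a single matrix. First I would write down the characteristic polynomial $\det\big((D_0+D_1)|\xi|^\sigma-\lambda I_{3\times 3}\big)=0$. Expanding the $3\times 3$ determinant with the explicit entries of $D_0$ and $D_1$, the $|\xi|^\sigma$-homogeneity forces $\lambda=|\xi|^\sigma\mu$ for a constant $\mu$, and $\mu$ solves a fixed cubic equation $\mu^3+a_2\mu^2+a_1\mu+a_0=0$ with rational (or rational-times-surd) coefficients coming from the entries. (One expects, by the structure $D_0+D_1=B_0+B_1-\tfrac12\,\mathrm{diag\text{-ish perturbation}}$, a cubic close to the one in Proposition \ref{Prop.2.3} but with shifted coefficients; the constant term should again involve $\sqrt{69}$ after applying Cardano's formula, which is consistent with the stated $z_3$.)

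Second, I would solve this cubic by Cardano's method. After the depression substitution $\mu=\nu-\tfrac{a_2}{3}$ — here $a_2$ should produce the $+\tfrac{2}{3}$ shift visible in the formulas for $y_4,y_5,y_6$ — one obtains $\nu^3+p\nu+q=0$, and the three roots are $\nu=z-\tfrac{p/3}{z}$ type expressions, where $z$ is a fixed cube root of $-\tfrac q2+\sqrt{\tfrac{q^2}{4}+\tfrac{p^3}{27}}$; matching with the stated answer, $p/3=\tfrac{5}{9}$ (so $p=\tfrac53$) and $z^3=-\tfrac{11}{2}+\tfrac{3}{2}\sqrt{69}$ up to the scalar $\tfrac13$, giving the stated $z_3$, with the other two roots obtained by multiplying $z_3$ by the primitive cube roots of unity $-\tfrac12\pm\tfrac{\sqrt3}{2}i$, which is exactly $z_4,z_5$. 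Thus $y_4=z_3-\tfrac{5}{9z_3}+\tfrac23$ and cyclically. I would then check, using the discriminant / Routh–Hurwitz on the cubic in $\mu$ (equivalently, that all $a_j>0$ and $a_1a_2>a_0$, or a direct sign check on the real parts), that $\mathrm{Re}(-y_j)<0$ for $j=4,5,6$ and that the three roots are pairwise distinct, so the eigenvalues $-y_j|\xi|^\sigma$ have pairwise distinct values with strictly negative real part.

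Third, having pairwise distinct eigenvalues, the matrix $D_0+D_1$ is diagonalizable: there is an invertible constant matrix $\widetilde T$ (columns = eigenvectors, which one can read off from $(D_0+D_1-(-y_j)I_{3\times3})\widetilde T e_j=0$) with $\widetilde T^{-1}(D_0+D_1)\widetilde T=\mathrm{diag}(-y_4,-y_5,-y_6)$. Introducing $\hat w^{(1)}:=\widetilde T^{-1}\hat w$ in \eqref{Eq.Fourier.First-Order.System.Add} with $\alpha=1/2$ transforms the system into $\hat w^{(1)}_t-\widetilde\Lambda_1(|\xi|)\hat w^{(1)}=0$ with $\widetilde\Lambda_1(|\xi|)=|\xi|^\sigma\,\mathrm{diag}(-y_4,-y_5,-y_6)$ and $\hat w_0^{(1)}:=\widetilde T^{-1}\hat w_0$, which is the claimed form — one step of diagonalization suffices because $D_0$ and $D_1$ share the same symbol.

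The main obstacle is purely computational rather than conceptual: verifying that the expansion of the $3\times 3$ determinant with the given complex entries really produces the specific cubic whose Cardano solution matches the stated $z_3,z_4,z_5$ and $y_4,y_5,y_6$, and confirming the sign of the real parts. Since the entries of $D_0$ involve $\pm i$ and $-\tfrac12$, one must be careful that the imaginary parts cancel to leave a real cubic in $\mu$ (they do, because the spectrum of a real-structured first-order symbol must be symmetric under the relevant conjugation). I would carry out this determinant expansion explicitly, reduce to $\nu^3+\tfrac53\nu+q=0$, read off $q$, apply Cardano, and then invoke the distinctness of the roots exactly as in Proposition \ref{Prop.2.3} to conclude; the rest is routine linear algebra.
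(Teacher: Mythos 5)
Your proposal is correct and follows essentially the same route the paper takes: the paper omits the proof of this proposition, stating only that it is obtained by "direct diagonalization" exactly as in Proposition \ref{Prop.2.3}, which is precisely what you carry out (the characteristic polynomial of $D_0+D_1-\mu I_{3\times 3}$ indeed reduces to the real cubic $\mu^3+2\mu^2+3\mu+1=0$, whose depressed form $\nu^3+\tfrac{5}{3}\nu+\tfrac{11}{27}=0$ yields the stated $z_3$, $z_4$, $z_5$ and $y_4$, $y_5$, $y_6$ via Cardano, and Routh--Hurwitz confirms the negative real parts). No gaps.
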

\begin{prop}\label{Prop.3.4}
	The solution $\hat{w}(t,\xi)$ to \eqref{Eq.Fourier.First-Order.System.Add} with $\alpha\in[0,1/2)\cup(1/2,1]$ fulfills the following estimate:
	\begin{align*}
	|\hat{w}(t,\xi)|\lesssim e^{-ct}|\hat{w}_0(\xi)|,
	\end{align*}
	for $\xi\in \ml{Z}_{\midd}(\varepsilon,N)$ and $t\geqslant0$ with a positive constant $c$.
\end{prop}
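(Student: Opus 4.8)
The plan is to adapt the contradiction argument used for Proposition \ref{Prop.2.4} to the modified coefficient matrix $D_0|\xi|^{\sigma}+D_1|\xi|^{2\sigma\alpha}$. First I would suppose, for $\xi\in\ml{Z}_{\midd}(\varepsilon,N)$, that there exists a purely imaginary eigenvalue $\mu=ia$ with $a\in\mb{R}\backslash\{0\}$, and compute the characteristic polynomial $\det\big(D_0|\xi|^{\sigma}+D_1|\xi|^{2\sigma\alpha}-\mu I_{3\times3}\big)$ explicitly. Substituting $\mu=ia$ and separating real and imaginary parts yields two polynomial equations in $a$ (with coefficients depending on $|\xi|^{\sigma}$ and $|\xi|^{2\sigma\alpha}$). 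The goal is to show this system has no real solution with $a\neq0$, thereby excluding purely imaginary eigenvalues for every $\xi$ in the compact zone.

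The key steps, in order, are: (i) expand the $3\times3$ determinant and collect it as a cubic $-\mu^3 + p_2(|\xi|)\mu^2 + p_1(|\xi|)\mu + p_0(|\xi|)$, where the presence of the structural-damping entries $-\tfrac12$ on the diagonal of $D_0$ introduces real parts already at order $|\xi|^{\sigma}$; (ii) set $\mu=ia$, so that the real part of the equation gives one relation and the imaginary part gives another; (iii) argue — either by a Routh–Hurwitz type sign inspection of $p_0,p_1,p_2$, or by showing the two relations from (ii) are incompatible unless $a=0$ — that no such $a$ exists; (iv) invoke the compactness of $\ml{Z}_{\midd}(\varepsilon,N)$ together with continuity of eigenvalues to conclude that $\sup_{\xi\in\ml{Z}_{\midd}(\varepsilon,N)}\max_j\operatorname{Re}\mu_j(|\xi|) =: -c < 0$; (v) match this against the endpoint behavior of the eigenvalues at $|\xi|=\varepsilon$ and $|\xi|=N$ coming from Propositions \ref{Prop.3.1} and \ref{Prop.3.2}, which all have strictly negative real parts there, so the exponential bound $|\hat{w}(t,\xi)|\lesssim e^{-ct}|\hat{w}_0(\xi)|$ propagates uniformly across the whole middle zone, exactly as in \cite{ReissigWang2005,Jachmann2008,Chen2019TEP}.

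The main obstacle I expect is step (iii): unlike in Proposition \ref{Prop.2.4}, where the pure-imaginary ansatz immediately forced the two contradictory identities $a^2=|\xi|^{2\sigma}$ and $a^2=|\xi|^{2\sigma}+|\xi|^{4\sigma\alpha}$, here the damping terms make the real and imaginary parts of the characteristic equation genuinely couple $|\xi|^{\sigma}$ and $|\xi|^{2\sigma\alpha}$, so the algebra is messier and one must be careful that the two resulting polynomial constraints really are inconsistent for all admissible $\alpha\in[0,1/2)\cup(1/2,1]$ and all $|\xi|\in[\varepsilon,N]$. A clean way around this is to observe that $D_1$ is non-negative definite and $-\tfrac12(D_0+D_0^{*})\ge0$ with the sum $D_1 - \tfrac12(D_0+D_0^{*})$ having no kernel vector that is simultaneously annihilated by the antisymmetric part — i.e. to run a Lyapunov/energy estimate in the Fourier space directly on \eqref{Eq.Fourier.First-Order.System.Add} rather than chasing determinant roots — which sidesteps the explicit cubic entirely. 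Either route closes the proof; the determinant route is more in the spirit of Proposition \ref{Prop.2.4}, while the energy route is more robust and is what I would fall back on if the algebra in (iii) turns out not to collapse cleanly.
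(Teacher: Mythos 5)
Your proposal follows exactly the route the paper intends: the paper omits the proof of this proposition, stating only that the same contradiction argument as in Proposition \ref{Prop.2.4} applies, which is precisely your plan. For reassurance on your step (iii): writing $r=|\xi|^{\sigma}$ and $s=|\xi|^{2\sigma\alpha}$, the characteristic polynomial of $D_0|\xi|^{\sigma}+D_1|\xi|^{2\sigma\alpha}$ is $\lambda^{3}+(r+s)\lambda^{2}+(r^{2}+rs+s^{2})\lambda+r^{2}s$, so the ansatz $\mu=ia$ forces $a^{2}=r^{2}s/(r+s)$ from the real part and $a^{2}=r^{2}+rs+s^{2}$ from the imaginary part, which are incompatible for $r,s>0$ (equivalently, the Routh--Hurwitz condition $(r+s)(r^{2}+rs+s^{2})>r^{2}s$ holds trivially) --- the algebra collapses just as cleanly as in Proposition \ref{Prop.2.4}, so your fallback energy argument is not needed.
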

\subsection{Decay estimates of solutions}\label{SubSec.DecayProp.Add.Damp}
Let us summarize the results of Propositions \ref{Prop.3.1}, \ref{Prop.3.2}, \ref{Prop.3.3} and \ref{Prop.3.4} to derive the following sharp pointwise estimates in the Fourier space.
\begin{prop}\label{Prop.PointWise.Add}
	The solution $\hat{w}=\hat{w}(t,\xi)$ to \eqref{Eq.Fourier.First-Order.System.Add} for $\sigma\geqslant1$, $\alpha\in[0,1]$ satisfies the pointwise estimates for any $\xi\in\mb{R}^n$ and $t\geqslant0$
	\begin{align*}
	|\hat{w}(t,\xi)|\lesssim e^{-c\eta(|\xi|)t}|\hat{w}_0(\xi)|,
	\end{align*}
	where the key function $\eta=\eta(|\xi|)$ is defined by
	\begin{align*}
	\eta(|\xi|):=
	\left\{
	\begin{aligned}
	&\frac{|\xi|^{2\sigma-2\sigma\alpha}}{(1+|\xi|^2)^{\sigma-2\sigma\alpha}},&&\text{if }\alpha\in[0,1/2],\\
	&\frac{|\xi|^{2\sigma\alpha}}{(1+|\xi|^2)^{2\sigma\alpha-\sigma}},&&\text{if }\alpha\in(1/2,1],
	\end{aligned}
	\right.
	\end{align*}
	with a positive constant $c>0$.
\end{prop}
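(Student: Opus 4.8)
The plan is to piece together the four propositions of this subsection (Propositions \ref{Prop.3.1}, \ref{Prop.3.2}, \ref{Prop.3.3}, \ref{Prop.3.4}) exactly as was done for the undamped case in Proposition \ref{Prop.PointWise}. For each frequency regime and each range of $\alpha$, the representation of $\hat{w}(t,\xi)$ is a product of matrices of the form $M(|\xi|)\,\diag(e^{\mu_j(|\xi|)t})_{j=1}^3\,M^{-1}(|\xi|)$, where the transformation matrices $M$ are uniformly bounded together with their inverses on the relevant zone (since $M_2,M_3,M_5 = \ml{O}(|\xi|^{\text{positive power}})$ as $|\xi|\to 0$ for $\alpha\in[0,1/2)$ in $\ml{Z}_{\intt}$, and similarly as $|\xi|\to\infty$ for $\alpha\in(1/2,1]$ in $\ml{Z}_{\extt}$; note the crucial change from the undamped case is that $M_5=\ml{O}(|\xi|^{2\sigma\alpha})$ involves a \emph{positive} exponent with no regularity-loss phenomenon). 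Hence $|\hat{w}(t,\xi)|\lesssim e^{-c\min_j|\mathrm{Re}\,\mu_j(|\xi|)|\,t}|\hat{w}_0(\xi)|$ on each zone, and it remains to identify the exponent.

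First I would treat the small-frequency behavior. For $\alpha\in[0,1/2)$, Proposition \ref{Prop.3.1} gives $\mathrm{Re}\,\mu_1(|\xi|)\sim -|\xi|^{2\sigma-2\sigma\alpha}$ while $\mathrm{Re}\,\mu_2,\mathrm{Re}\,\mu_3\sim -\tfrac12|\xi|^{2\sigma\alpha}$; since $2\sigma-2\sigma\alpha > 2\sigma\alpha$ fails only for $\alpha>1/2$, for $\alpha<1/2$ the \emph{smaller} rate as $|\xi|\to 0$ is $|\xi|^{2\sigma-2\sigma\alpha}$, which dictates $\rho$-type decay $\asymp |\xi|^{2\sigma-2\sigma\alpha}$ near zero. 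For $\alpha\in(1/2,1]$, Proposition \ref{Prop.3.2} in $\ml{Z}_{\intt}$ gives $\mathrm{Re}\,\mu_1\sim -|\xi|^{2\sigma\alpha}$ and $\mathrm{Re}\,\mu_2,\mathrm{Re}\,\mu_3\sim -\tfrac12|\xi|^{\sigma}$; since $2\sigma\alpha>\sigma$ for $\alpha>1/2$, the slower rate near zero is $|\xi|^{2\sigma\alpha}$, giving $\eta\asymp|\xi|^{2\sigma\alpha}$ there. Next, for large frequencies with $\alpha\in[0,1/2)$, Proposition \ref{Prop.3.2} in $\ml{Z}_{\extt}$ gives the same eigenvalue asymptotics, and now as $|\xi|\to\infty$ the dominant (i.e.\ smallest real part in absolute value, hence controlling) term is $|\xi|^{\sigma}$ versus $|\xi|^{2\sigma\alpha}$: since $\sigma > 2\sigma\alpha$ for $\alpha<1/2$, the decay is governed by $|\xi|^{2\sigma\alpha}$ as $|\xi|\to\infty$. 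For $\alpha\in(1/2,1]$ and large frequencies, Proposition \ref{Prop.3.1} in $\ml{Z}_{\extt}$ applies, where $\mathrm{Re}\,\mu_1\sim-|\xi|^{2\sigma-2\sigma\alpha}$ and $\mathrm{Re}\,\mu_2,\mathrm{Re}\,\mu_3\sim -\tfrac12|\xi|^{\sigma}$; as $|\xi|\to\infty$ the slower one is $|\xi|^{2\sigma-2\sigma\alpha}$ (since $2\sigma-2\sigma\alpha<\sigma$ for $\alpha>1/2$), yielding decay $\asymp|\xi|^{2\sigma-2\sigma\alpha}$ at infinity. I would then check the boundary case $\alpha=1/2$ via Proposition \ref{Prop.3.3}, where the eigenvalues are $-y_j|\xi|^\sigma$ with $\mathrm{Re}\,y_j>0$, consistent with both branches of $\eta$ collapsing to $|\xi|^{2\sigma-\sigma}=|\xi|^\sigma$ when $\alpha=1/2$.

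The remaining step is to package these four zonewise exponential bounds into the single claimed formula for $\eta(|\xi|)$. For $\alpha\in[0,1/2]$ the exponent must behave like $|\xi|^{2\sigma-2\sigma\alpha}$ near $0$ and like $|\xi|^{2\sigma\alpha}$ near $\infty$; the interpolating expression $|\xi|^{2\sigma-2\sigma\alpha}/(1+|\xi|^2)^{\sigma-2\sigma\alpha}$ does exactly this (as $|\xi|\to0$ the denominator is $\asymp1$; as $|\xi|\to\infty$ it is $\asymp|\xi|^{2\sigma-4\sigma\alpha}$, so the ratio is $\asymp|\xi|^{2\sigma\alpha}$), and on the compact middle zone Proposition \ref{Prop.3.4} gives a uniform $e^{-ct}$ which is compatible since $\eta$ is bounded above and below there. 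Symmetrically, for $\alpha\in(1/2,1]$ the expression $|\xi|^{2\sigma\alpha}/(1+|\xi|^2)^{2\sigma\alpha-\sigma}$ is $\asymp|\xi|^{2\sigma\alpha}$ near $0$ and $\asymp|\xi|^{2\sigma-2\sigma\alpha}$ near $\infty$, matching the established zonewise rates. Combining with the cut-off functions $\chi_{\intt}+\chi_{\midd}+\chi_{\extt}=1$ yields the pointwise estimate for all $\xi\in\mb{R}^n$.

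The main obstacle is not the routing of the propositions but verifying the uniform boundedness of the transformation matrices and their inverses in each zone — in particular confirming that $M_{\alpha,\intt}(|\xi|)$, $M_{\alpha,\extt}(|\xi|)$ in Propositions \ref{Prop.3.1}, \ref{Prop.3.2} remain invertible with bounded inverse up to (and including) the relevant endpoint of the zone, which is what actually legitimizes absorbing the matrix factors into the implicit constant. This is exactly where the structural-damping modification matters: because $D_0$ is no longer purely imaginary, the off-diagonal perturbation matrices carry genuinely positive powers of $|\xi|$ in \emph{both} the small- and large-frequency regimes, so there is no loss of boundedness at large $|\xi|$ and hence no regularity-loss structure — precisely the qualitative difference advertised in the introduction. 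Once that boundedness is in hand, the assembly of $\eta$ is the routine interpolation described above.
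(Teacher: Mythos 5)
Your proposal is correct and follows essentially the same route as the paper, which proves this proposition simply by summarizing Propositions \ref{Prop.3.1}--\ref{Prop.3.4} zone by zone; your identification of the slowest-decaying real part of the eigenvalues in each frequency regime and the interpolation into the single key function $\eta(|\xi|)$ is exactly the intended argument. The point you flag about uniform invertibility of the transformation matrices on each zone is the right thing to check, and it is handled in the same implicit way in the paper's treatment of the undamped case (Proposition \ref{Prop.PointWise}).
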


We observe that the decay properties of \eqref{Eq.First-Order.TPE} and \eqref{Eq.First-Order.TPE.Add.Damping} are quite different, which is caused by the additional structural damping term. Let us summarize them by the next points.
\begin{itemize}
	\item Concerning the case when $\alpha\in[0,1/2]$, the decay properties for \eqref{Eq.First-Order.TPE.Add.Damping} as $|\xi|\rightarrow0$ can be described by $\eta(|\xi|)\asymp|\xi|^{2\sigma-2\sigma\alpha}$, which is the same as the decay properties for \eqref{Eq.First-Order.TPE}. Nevertheless, for $|\xi|\rightarrow\infty$, the key function $\eta(|\xi|)$ when $\alpha\in[0,1/2]$ asymptotically behaviors as $\eta(|\xi|)\asymp|\xi|^{2\sigma\alpha}$. In other words, the regularity-loss structure is destroyed. We may interpret that this phenomenon is cased by the additional damping term $\ml{A}^{1/2}u_t$. 
	\item Concerning the case when $\alpha\in(1/2,1]$, we find that the decay properties for $|\xi|\rightarrow0$ can be characterized by $\eta(|\xi|)\asymp|\xi|^{2\sigma\alpha}$. It means that \eqref{Eq.First-Order.TPE.Add.Damping} has a stronger decay properties than \eqref{Eq.First-Order.TPE} when $\alpha\in(1/2,1]$. The stronger decay properties will lead to faster decay estimates of solutions. One may see in Theorem \ref{Thm.Solution.Esitmate.Add}.
\end{itemize}

The result on decay estimates of solutions is the following.

\begin{theorem}\label{Thm.Solution.Esitmate.Add}
	Suppose that initial data $w_0 \in H^s\cap L^{1,\kappa}$  with $s\geqslant0$ and $\kappa\in[0,1]$. Then, the solutions for \eqref{Eq.First-Order.TPE.Add.Damping}  with $\sigma\geqslant1$ satisfies the following estimates:
	\begin{align*}
	\|w(t,\cdot)\|_{\dot{H}^{s_0}}\lesssim\left\{
	\begin{aligned}
	&(1+t)^{-\frac{n+2(s_0+\kappa)}{2(2\sigma-2\sigma\alpha)}}\|w_0\|_{L^{1,\kappa}}+e^{-ct}\|w_0\|_{H^{s_0}}\\
	&\qquad\qquad\qquad\qquad\qquad\,\,\,\,\,+(1+t)^{-\frac{n+2s_0}{2(2\sigma-2\sigma\alpha)}}|P_{w_0}|\qquad\text{if }\alpha\in[0,1/2],\\
	&(1+t)^{-\frac{n+2(s_0+\kappa)}{4\sigma\alpha}}\|w_0\|_{L^{1,\kappa}}+e^{-ct}\|w_0\|_{H^{s_0}}\\
	&\qquad\qquad\qquad\qquad\qquad\,\,\,\,\,+(1+t)^{-\frac{n+2s_0}{4\sigma\alpha}}|P_{w_0}|\qquad\quad\,\,\text{if }\alpha\in(1/2,1],
	\end{aligned}
	\right.
	\end{align*}
	where $0\leqslant s_0\leqslant s$ with $s_0\geqslant0$. Here $c>0$ is a positive constant.
\end{theorem}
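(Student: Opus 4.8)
The plan is to mimic the proof of Theorem \ref{Thm.Solution.Esitmate}, replacing the key function $\rho$ by the function $\eta$ from Proposition \ref{Prop.PointWise.Add} and exploiting the fact that the structural damping removes the regularity-loss at large frequencies. First I would invoke Proposition \ref{Prop.PointWise.Add} together with the Plancherel theorem and the decomposition $\chi_{\intt}(\xi)+\chi_{\midd}(\xi)+\chi_{\extt}(\xi)=1$ to reduce matters to
\begin{align*}
\|w(t,\cdot)\|_{\dot{H}^{s_0}}\lesssim{}&\left\|\chi_{\intt}(D)|D|^{s_0}\ml{F}_{\xi\rightarrow x}^{-1}\big(e^{-c\eta(|\xi|)t}\big)w_0(x)\right\|_{L^2}+e^{-ct}\|\chi_{\midd}(D)w_0(x)\|_{H^{s_0}}\\
&{}+\left\|\chi_{\extt}(D)|D|^{s_0}\ml{F}_{\xi\rightarrow x}^{-1}\big(e^{-c\eta(|\xi|)t}\big)w_0(x)\right\|_{L^2},
\end{align*}
where the middle-zone term already carries exponential decay by Proposition \ref{Prop.3.4}.

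For the low-frequency term I would use that on the support of $\chi_{\intt}$ one has $\eta(|\xi|)\asymp|\xi|^{2\sigma-2\sigma\alpha}$ when $\alpha\in[0,1/2]$ and $\eta(|\xi|)\asymp|\xi|^{2\sigma\alpha}$ when $\alpha\in(1/2,1]$; after shrinking the constant $c$, this puts the term into the scope of Lemma \ref{Lemma.1} with $\theta=2\sigma-2\sigma\alpha$, respectively $\theta=2\sigma\alpha$. This produces precisely the stated $\|w_0\|_{L^{1,\kappa}}$ and $|P_{w_0}|$ contributions with exponents $\frac{n+2(s_0+\kappa)}{2(2\sigma-2\sigma\alpha)}$, $\frac{n+2s_0}{2(2\sigma-2\sigma\alpha)}$ in the first range, and $\frac{n+2(s_0+\kappa)}{4\sigma\alpha}$, $\frac{n+2s_0}{4\sigma\alpha}$ in the second range.

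For the high-frequency term the decisive point — and this is where the structural damping enters — is that $\eta(|\xi|)$ is bounded from below on $\ml{Z}_{\extt}(N)$ in both parameter ranges: for $\alpha\in[0,1/2]$ one has $\eta(|\xi|)\asymp|\xi|^{2\sigma\alpha}\gtrsim1$, while for $\alpha\in(1/2,1]$ one has $\eta(|\xi|)\asymp|\xi|^{2\sigma(1-\alpha)}\gtrsim1$ (a positive constant when $\alpha=1$). Hence $e^{-c\eta(|\xi|)t}\lesssim e^{-ct}$ uniformly for $|\xi|\geqslant N$, so Lemma \ref{Lemma.2} with $\theta\geqslant0$ and $\ell=0$ gives the contribution $e^{-ct}\|w_0\|_{H^{s_0}}$ with no loss of regularity; this is exactly the contrast with Theorem \ref{Thm.Solution.Esitmate}, where the $|\xi|\rightarrow\infty$ behaviour of $\rho$ forced a polynomially decaying regularity-loss term for $\alpha\in[0,1/3)$. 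Collecting the three zones yields the claimed estimates, and the hypothesis $0\leqslant s_0\leqslant s$ suffices because only $H^{s_0}$-regularity of $w_0$ is used. The computation is essentially routine once Proposition \ref{Prop.PointWise.Add} is in hand; the only genuine verification is the uniform lower bound $\eta(|\xi|)\gtrsim1$ for large $|\xi|$ across $\alpha\in[0,1]$, which is the structural reason why the regularity-loss of \eqref{Eq.Gen.Ther.Plate.Eq.} is destroyed by the damping term $\ml{A}^{1/2}u_t$.
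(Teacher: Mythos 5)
Your proposal is correct and follows exactly the route the paper takes: its proof consists of invoking Proposition \ref{Prop.PointWise.Add} together with Lemma \ref{Lemma.1} (and implicitly Lemma \ref{Lemma.2} with $\theta\geqslant0$ for the exterior zone), which is precisely your three-zone decomposition. Your only added content is the explicit verification that $\eta(|\xi|)\gtrsim1$ on $\ml{Z}_{\extt}(N)$ for all $\alpha\in[0,1]$, which the paper leaves implicit but which is indeed the correct justification for the loss-free exponential term $e^{-ct}\|w_0\|_{H^{s_0}}$.
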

\begin{proof}
By using sharp pointwise estimates stated in Proposition \ref{Prop.PointWise.Add} and Lemma \ref{Lemma.1}, we may complete the proof immediately.
\end{proof}
\begin{remark}
Providing that we consider $|P_{w_0}|\equiv0$ in Theorem \ref{Thm.Solution.Esitmate.Add}, we may see that decay rate of the corresponding estimates can be improved by $(1+t)^{-\frac{\kappa}{2\sigma-2\sigma\alpha}}$ if $\alpha\in[0,1/2]$ and by $(1+t)^{-\frac{\kappa}{2\sigma\alpha}}$ if $\alpha\in(1/2,1]$, where $\kappa\in[0,1]$.
\end{remark}

Lastly, we discuss the influence of the additional damping term $\ml{A}^{1/2}u_t$ on decay properties for generalized thermoelastic plate equations \eqref{Eq.Gen.Ther.Plate.Eq.}. For one thing, the additional structural damping, in general, can improve decay rate in the decay estimates. However, there exists a competition between structural damping and thermal damping generated by Fourier's law. For the case when $\alpha\in[0,1/2]$, the thermal damping generated by Fourier's law has a dominant influence in comparison with the structural damping. Thus, we only can feel such improvement of decay estimates for the case when $\alpha\in(1/2,1]$.  For another, the structural damping can bring smoothing effect (see for example in coupled systems \cite{Reissig2016,ChenReissig2019,Chen2019TEP}). Up to a point, it can compensate regularity-loss structure. For this reason, the regularity-loss structure for \eqref{Eq.Gen.Ther.Plate.Eq.} is destroyed when $\alpha\in[0,1/3)$.
\subsection{Asymptotic profiles of solutions}\label{SubSec.Asym.Prof.Add.Damp}
With the same reason as Section \ref{Subsec.Asym.Prof.TPE}, we now study asymptotic profiles of solutions for \eqref{Eq.First-Order.TPE.Add.Damping} with $\sigma\in[1,\infty)$ and $\alpha\in[0,1/2)\cup(1/2,1]$. Here, we may interpret asymptotic profiles for small frequencies, due to the fact that the decay rate of estimates of solutions in Theorem \ref{Thm.Solution.Esitmate.Add} is determined by small frequencies only, and exponential decay estimates can be obtained for bounded frequencies and large frequencies.

Let us introduce the reference system for $\sigma\geqslant 1$ and $\alpha\in[0,1/2)$
\begin{equation}\label{Eq reference system 3}
\left\{
\begin{aligned}
&\widetilde{w}_t+\widetilde{D}_0(-\Delta)^{\sigma\alpha}\widetilde{w}+\widetilde{D}_1(-\Delta)^{\sigma}\widetilde{w}+\widetilde{D}_2(-\Delta)^{\sigma-\sigma\alpha}\widetilde{w}=0,&&x\in\mb{R}^n,\,\,t>0,\\
&\widetilde{w}(0,x)=\ml{F}^{-1}\left((I_{3\times 3}+M_2(|\xi|))^{-1}M_1^{-1}\right)w_0(x),&&x\in\mb{R}^n,
\end{aligned}
\right.
\end{equation}
where we denoted
\begin{align*}
\widetilde{D}_0&=\diag\left(0,\tfrac{1}{2}+\tfrac{\sqrt{3}}{2}i,\tfrac{1}{2}-\tfrac{\sqrt{3}}{2}i\right),\\
\widetilde{D}_1&=\diag\left(0,\tfrac{1}{2}+\tfrac{\sqrt{3}}{6}i,\tfrac{1}{2}-\tfrac{\sqrt{3}}{6}i\right),\\
\widetilde{D}_2&=\diag\left(1,-\tfrac{1}{2}+\tfrac{\sqrt{3}}{18}i,-\tfrac{1}{2}-\tfrac{\sqrt{3}}{18}i\right).
\end{align*}
\begin{remark}
	We observe that the new operator $(-\Delta)^{\sigma}$ comes in the reference system \eqref{Eq reference system 3} when $\alpha\in[0,1/2)$. This main difference between \eqref{Eq reference system 3} and \eqref{Eq reference system 1} is caused by the additional structural damping $\ml{A}^{1/2}u_t$. Although from decay properties point of view, the additional damping has no effect on decay rate for $\alpha\in[0,1/2)$, actually, it has great influence on asymptotic profiles of solutions.
\end{remark}
Moreover, we introduce another reference system for $\sigma\geqslant 1$ and $\alpha\in(1/2,1]$
\begin{equation}\label{Eq reference system 4}
\left\{
\begin{aligned}
&\overline{w}_t+\overline{D}_0(-\Delta)^{\sigma/2}\overline{w}+\overline{D}_1(-\Delta)^{\sigma\alpha}\overline{w}=0,&&x\in\mb{R}^n,\,\,t>0,\\
&\overline{w}(0,x)=M_4^{-1}w_0(x),&&x\in\mb{R}^n,
\end{aligned}
\right.
\end{equation}
where we defined\begin{align*}
\overline{D}_0=\diag\left(0,\tfrac{1}{2}+\tfrac{\sqrt{3}}{2}i,\tfrac{1}{2}-\tfrac{\sqrt{3}}{2}i\right),\quad\overline{D}_1=\diag(1,0,0).
\end{align*}

Furthermore, we denote the functions $Q_{\intt}(t,x)$ and $\widetilde{Q}_{\intt}(t,x)$ by
\begin{align*}
Q_{\intt}(t,x)&=\chi_{\intt}(D)\ml{F}^{-1}(M_1(I_{3\times 3}+M_2(|\xi|)))\widetilde{w}(t,x),&&\text{if }\alpha\in[0,1/2),\\
\widetilde{Q}_{\intt}(t,x)&=\chi_{\intt}(D)M_4\overline{w}(t,x),&&\text{if }\alpha\in(1/2,1]&.
\end{align*}
\begin{remark}
The functions $S_{\intt}(t,x)$, $\widetilde{S}_{\intt}(t,x)$ satisfy the estimates
\begin{align*}
	\|Q_{\intt}(t,\cdot)\|_{\dot{H}^{s_0}}&\lesssim(1+t)^{-\frac{n+2(s_0+\kappa)}{2(2\sigma-2\sigma\alpha)}}\|w_0\|_{L^{1,\kappa}}+(1+t)^{-\frac{n+2s_0}{2(2\sigma-2\sigma\alpha)}}|P_{w_0}|,\\
\|\widetilde{Q}_{\intt}(t,\cdot)\|_{\dot{H}^{s_0}}&\lesssim(1+t)^{-\frac{n+2(s_0+\kappa)}{4\sigma\alpha}}\|w_0\|_{L^{1,\kappa}}+(1+t)^{-\frac{n+2s_0}{4\sigma\alpha}}|P_{w_0}|,
\end{align*}
where initial data $w_0$ is taken from $H^{s}\cap L^{1,\kappa}$ with $0\leqslant s_0\leqslant s$ and $\kappa\in[0,1]$.
\end{remark}
\begin{theorem}\label{Thm.Asy.Pro.3}
	Suppose that initial data $w_0\in H^s\cap L^{1,\kappa}$ with $s\geqslant0$ and $\kappa\in[0,1]$. For one thing, the following refinement estimates for \eqref{Eq.First-Order.TPE.Add.Damping} with $\sigma\geqslant1$ and $\alpha\in[0,1/2)$ hold:
	\begin{align*}
	\|(\chi_{\intt}(D)w-Q_{\intt})(t,\cdot)\|_{\dot{H}^{s_0}}\lesssim&(1+t)^{-\frac{n+2(s_0+\kappa)}{2(2\sigma-2\sigma\alpha)}-\frac{1-2\alpha}{2(1-\alpha)}}\|w_0\|_{L^{1,\kappa}}\\
	&+(1+t)^{-\frac{n+2s_0}{2(2\sigma-2\sigma\alpha)}-\frac{1-2\alpha}{2(1-\alpha)}}|P_{w_0}|.
	\end{align*}
	For another, the following refinement estimates for \eqref{Eq.First-Order.TPE.Add.Damping} with $\sigma\geqslant1$ and $\alpha\in(1/2,1]$ hold:
	\begin{align*}
	\left\|(\chi_{\intt}(D)w-\widetilde{Q}_{\intt})(t,\cdot)\right\|_{\dot{H}^{s_0}}\lesssim&(1+t)^{-\frac{n+2(s_0+\kappa)}{4\sigma\alpha}-\frac{2\alpha-1}{2\alpha}}\|w_0\|_{L^{1,\kappa}}\\
	&+(1+t)^{-\frac{n+2s_0}{4\sigma\alpha}-\frac{2\alpha-1}{2\alpha}}|P_{w_0}|.
	\end{align*}
	Here we assume $0\leqslant s_0\leqslant s$.
\end{theorem}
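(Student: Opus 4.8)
The plan is to reproduce, in the small-frequency zone only, the small-frequency part of the argument of Theorem~\ref{Thm.Asymptotic.Profile.}, treating the ranges $\alpha\in[0,1/2)$ and $\alpha\in(1/2,1]$ separately through the representations of $\hat{w}$ on $\ml{Z}_{\intt}(\varepsilon)$ from Propositions~\ref{Prop.3.1} and~\ref{Prop.3.2}; no bounded- or large-frequency contribution enters, since there $|\hat{w}(t,\xi)|\lesssim e^{-ct}|\hat{w}_0(\xi)|$ by Propositions~\ref{Prop.3.3} and~\ref{Prop.3.4}. Write $\tilde{\mu}_j(|\xi|)$ for the principal part (the listed leading terms) of $\mu_j(|\xi|)$ when $\alpha\in[0,1/2)$ and $\bar{\mu}_j(|\xi|)$ when $\alpha\in(1/2,1]$. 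The first step is to confirm that the reference systems were built so their Fourier symbols reproduce exactly these principal parts, namely $-\widetilde{D}_0|\xi|^{2\sigma\alpha}-\widetilde{D}_1|\xi|^{\sigma}-\widetilde{D}_2|\xi|^{2\sigma-2\sigma\alpha}=\diag\big(\tilde{\mu}_j(|\xi|)\big)_{j=1}^3$ by comparing Proposition~\ref{Prop.3.1} with \eqref{Eq reference system 3}, and $-\overline{D}_0|\xi|^{\sigma}-\overline{D}_1|\xi|^{2\sigma\alpha}=\diag\big(\bar{\mu}_j(|\xi|)\big)_{j=1}^3$ by comparing Proposition~\ref{Prop.3.2} with \eqref{Eq reference system 4}. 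Together with the prescribed initial data this identifies $\ml{F}_{x\rightarrow\xi}(Q_{\intt})(t,\xi)=\chi_{\intt}(\xi)M_1(I_{3\times3}+M_2(|\xi|))\diag(e^{\tilde{\mu}_j(|\xi|)t})_{j=1}^3(I_{3\times3}+M_2(|\xi|))^{-1}M_1^{-1}\hat{w}_0(\xi)$ and $\ml{F}_{x\rightarrow\xi}(\widetilde{Q}_{\intt})(t,\xi)=\chi_{\intt}(\xi)M_4\diag(e^{\bar{\mu}_j(|\xi|)t})_{j=1}^3M_4^{-1}\hat{w}_0(\xi)$.

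Next I would decompose $\chi_{\intt}(\xi)\hat{w}(t,\xi)$. For $\alpha\in[0,1/2)$, using $M_{\alpha,\intt}(|\xi|)=M_1(I_{3\times3}+M_2(|\xi|))(I_{3\times3}+M_3(|\xi|))$ and the relation $(I_{3\times3}+M_3)^{-1}=I_{3\times3}-(I_{3\times3}+M_3)^{-1}M_3$, one isolates a main term $J_{\intt,1}(t,|\xi|)=M_1(I_{3\times3}+M_2)\diag(e^{\mu_j(|\xi|)t})_{j=1}^3(I_{3\times3}+M_2)^{-1}M_1^{-1}\hat{w}_0$ and two remainder terms each carrying a factor $M_3(|\xi|)=\ml{O}(|\xi|^{2\sigma-2\sigma\alpha})$; for $\alpha\in(1/2,1]$ the same is done with $M_{\alpha,\intt}(|\xi|)=M_4(I_{3\times3}+M_5(|\xi|))$ and $(I_{3\times3}+M_5)^{-1}=I_{3\times3}-(I_{3\times3}+M_5)^{-1}M_5$, the remainders now carrying a factor $M_5(|\xi|)=\ml{O}(|\xi|^{2\sigma\alpha})$. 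For the main term one applies the integral identity
\[
e^{\mu_j(|\xi|)t}-e^{\tilde{\mu}_j(|\xi|)t}=-g_j(|\xi|)\,t\,e^{\tilde{\mu}_j(|\xi|)t}\int_0^1 e^{-g_j(|\xi|)t\tau}\,d\tau,
\]
with $g_j(|\xi|)=\tilde{\mu}_j(|\xi|)-\mu_j(|\xi|)=\ml{O}(|\xi|^{3\sigma-4\sigma\alpha})$ when $\alpha\in[0,1/2)$ and $g_j(|\xi|)=\ml{O}(|\xi|^{4\sigma\alpha-\sigma})$ when $\alpha\in(1/2,1]$. Since on $\ml{Z}_{\intt}(\varepsilon)$ the exponents $3\sigma-4\sigma\alpha$ and $4\sigma\alpha-\sigma$ exceed the leading decay exponents $2\sigma-2\sigma\alpha$ and $2\sigma\alpha$ respectively, the integral factor stays bounded, yielding $\big|\chi_{\intt}(\xi)J_{\intt,1}(t,|\xi|)-\ml{F}_{x\rightarrow\xi}(Q_{\intt})(t,\xi)\big|\lesssim(1+t)|\xi|^{3\sigma-4\sigma\alpha}e^{-c|\xi|^{2\sigma-2\sigma\alpha}t}|\hat{w}_0(\xi)|$, and analogously with $\widetilde{Q}_{\intt}$, weight $|\xi|^{4\sigma\alpha-\sigma}$ and kernel $e^{-c|\xi|^{2\sigma\alpha}t}$.

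Finally, inserting these pointwise bounds into Lemma~\ref{Lemma.1} with $\theta=2\sigma-2\sigma\alpha$ and the extra weight $|\xi|^{3\sigma-4\sigma\alpha}$ (respectively $\theta=2\sigma\alpha$ and weight $|\xi|^{4\sigma\alpha-\sigma}$), the factor $(1+t)$ combines with the polynomial weight to give the net power $1-\tfrac{3\sigma-4\sigma\alpha}{2\sigma-2\sigma\alpha}=-\tfrac{1-2\alpha}{2(1-\alpha)}$ for $\alpha\in[0,1/2)$, and $1-\tfrac{4\sigma\alpha-\sigma}{2\sigma\alpha}=-\tfrac{2\alpha-1}{2\alpha}$ for $\alpha\in(1/2,1]$, which are exactly the claimed improvements; the remainder terms, via Lemma~\ref{Lemma.1} with the extra weight $|\xi|^{2\sigma-2\sigma\alpha}$ (respectively $|\xi|^{2\sigma\alpha}$), produce an additional factor $(1+t)^{-1}$, and since $\tfrac{1-2\alpha}{2(1-\alpha)}<1$ and $\tfrac{2\alpha-1}{2\alpha}<1$ on the relevant ranges this decays faster and is absorbed into the stated bounds. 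I expect the main obstacle to be purely computational bookkeeping: verifying the exact equality of the principal eigenvalue parts with the reference-system symbols and the compatibility of the transformation matrices $M_1,M_2,M_4,M_5$ across the diagonalization and the reference systems; additionally, for $\alpha\in(1/2,1]$ one must confirm that the modes $j=2,3$, whose leading power is $|\xi|^{\sigma}$ rather than the dominant $|\xi|^{2\sigma\alpha}$, still fit within the rate governed by $\eta(|\xi|)\asymp|\xi|^{2\sigma\alpha}$, which holds because $\sigma<2\sigma\alpha$ for $\alpha>1/2$ makes those modes decay faster near $|\xi|=0$.
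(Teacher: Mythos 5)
Your proof is correct and is precisely the argument the paper intends: Theorem \ref{Thm.Asy.Pro.3} is stated without proof, but the implicit proof (as with Theorem \ref{Thm.Asy.Pro.2}) is the small-frequency part of the proof of Theorem \ref{Thm.Asymptotic.Profile.} transplanted to Propositions \ref{Prop.3.1}--\ref{Prop.3.2} and the reference systems \eqref{Eq reference system 3}--\eqref{Eq reference system 4}, with exactly the rate bookkeeping you carry out. The only caveat is that your symbol matching tacitly reads the term $\widetilde{D}_1(-\Delta)^{\sigma}$ in \eqref{Eq reference system 3} as having symbol $|\xi|^{\sigma}$ (i.e.\ as $\widetilde{D}_1(-\Delta)^{\sigma/2}$), which is what the expansion of $\mu_{2,3}$ in Proposition \ref{Prop.3.1} requires and appears to be a typo in the paper rather than an error on your part.
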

To end this section, we summarize the results in Theorems \ref{Thm.Asymptotic.Profile.} and \ref{Thm.Asy.Pro.2} to show the asymptotic profiles of solutions, especially, the improvements of decay rate and regularity of initial data.
\begin{figure}[http]
	\centering
	\begin{tikzpicture}
	\draw[thick] (-0.2,0) -- (12,0) node[below] {$1$};
	\node[left] at (0,-0.2) {{$0$}};
	\node[below] at (6,0) {{$1/2$}};
	\draw [decorate, decoration={brace, amplitude=5pt}]  (-0.2,0.5) -- (5.8,0.5);
	\draw [decorate, decoration={brace, amplitude=5pt}]  (6.2,0.5) -- (12,0.5);
	\node[above] at (3,0.6) {decay rate improved $-\frac{1-2\alpha}{2(1-\alpha)}$};
	\node[above] at (9,0.6) {decay rate improved $-\frac{2\alpha-1}{2\alpha}$};
	\node[left] at (-0.5,0) {Value of $\alpha$:};
	\draw[fill] (-0.2, 0) circle[radius=1pt];
	\draw[fill] (6, 0) circle[radius=1pt];
	\draw[fill] (12, 0) circle[radius=1pt];
	\end{tikzpicture}
	\caption{Improvements of decay rate}
	\label{imgg1}
\end{figure}
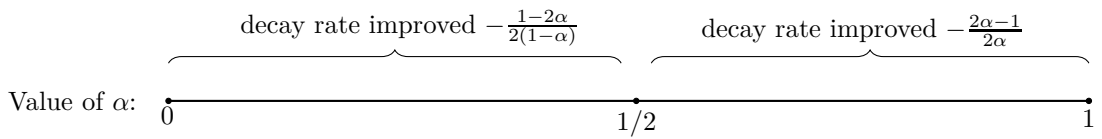
\begin{remark}
Let us discuss the influence of the structural damping $\ml{A}^{1/2}u_t$ on diffusion structure (improvements of decay rate and regularity of initial data). Comparing Theorems \ref{Thm.Asymptotic.Profile.} and \ref{Thm.Asy.Pro.2} with Theorem \ref{Thm.Asy.Pro.3}, or conveniently Figure \ref{imgg} with Figure \ref{imgg1}, we find the following phenomena:
\begin{itemize}
	\item For the case when $\alpha\in[0,1/2)$, there exists the same diffusion structure for decay rate improvement.
	\item For the case when $\alpha\in(1/2,1]$, the improvement for decay rate is shifting from $-\frac{2\alpha-1}{2(3\alpha-1)}$ to $-\frac{2\alpha-1}{2\alpha}$.
	\item For the case when $\alpha\in[0,1/3)$, the improvement for regularity of initial data disappear.
\end{itemize}
\end{remark}

\section{Applications}\label{Sec.Application}
This section is devoted to some applications of our derived results, including decay properties and asymptotic profiles, on thermoelastic plate equations with or without additional structural damping and damped Moore-Gibson-Thompson equation.
\subsection{Applications on thermoelastic plate equations}
For one thing, we consider $\sigma=2$ and $\alpha=1/2$ in \eqref{Eq.Gen.Ther.Plate.Eq.}, which is
\begin{equation}\label{Eq.Thermo.E.P.}
\left\{
\begin{aligned}
&u_{tt}+\Delta^2u+\Delta v=0,&&x\in\mb{R}^n,\,\,t>0,\\
&v_t-\Delta v-\Delta u_t=0,&&x\in\mb{R}^n,\,\,t>0,\\
&(u,u_t,v)(0,x)=(u_0,u_1,v_0)(x),&&x\in\mb{R}^n.
\end{aligned}
\right.
\end{equation}
According to Theorem \ref{Thm.Solution.Esitmate}, we may obtain the following energy estimates:
\begin{align*}
\|(u_t,\Delta u,v)(t,\cdot)\|_{\dot{H}^{s_0}}\lesssim &(1+t)^{-\frac{n+2(s_0+\kappa)}{4}}\|(u_1,\Delta u_0,v_0)\|_{H^{s_0}\cap L^{1,\kappa}}\\
&+(1+t)^{-\frac{n+2s_0}{4}}\left(|P_{u_0}|+|P_{u_1}|+|P_{v_0}|\right),
\end{align*}
for all $s_0\geqslant0$ and $\kappa\in[0,1]$. This result corresponds to the energy estimates derived in Theorem 2.2 of \cite{RackeUeda2016} if $\kappa=0$, $s_0\geqslant0$, and to the energy estimates derived in Theorem 2.1 of \cite{SaidHouari2013} if $\kappa\in[0,1]$, $s_0=0$.	

For another, we consider $\sigma=2$ and $\alpha=1/2$ in \eqref{Eq.Gen.Ther.Plate.Eq.Add.Damping}, which is
\begin{equation}\label{Eq.Thermo.E.P.S.D}
\left\{
\begin{aligned}
&u_{tt}+\Delta^2u+\Delta v-\Delta u_t=0,&&x\in\mb{R}^n,\,\,t>0,\\
&v_t-\Delta v-\Delta u_t=0,&&x\in\mb{R}^n,\,\,t>0,\\
&(u,u_t,v)(0,x)=(u_0,u_1,v_0)(x),&&x\in\mb{R}^n.
\end{aligned}
\right.
\end{equation}
According to Theorem \ref{Thm.Solution.Esitmate.Add}, we may obtain the following energy estimates:
\begin{align*}
\|(u_t,\Delta u,v)(t,\cdot)\|_{\dot{H}^{s_0}}\lesssim &(1+t)^{-\frac{n+2(s_0+\kappa)}{4}}\|(u_1,\Delta u_0,v_0)\|_{H^{s_0}\cap L^{1,\kappa}}\\
&+(1+t)^{-\frac{n+2s_0}{4}}\left(|P_{u_0}|+|P_{u_1}|+|P_{v_0}|\right),
\end{align*}
for all $s_0\geqslant0$ and $\kappa\in[0,1]$. The above estimates corresponds to the energy estimates derived in Theorems 4.2 and 4.3 of \cite{Chen2019TEP}.

\begin{remark}
These two results also tell us that considering thermoelastic plate equations in $\mb{R}^n$, the additional structural damping $-\Delta u_t$ does not exert any influence on energy estimates because now the thermal damping generated by Fourier's law plays a dominant role.
\end{remark}

\subsection{Applications on damped Moore-Gibson-Thompson equation}
Before showing the applications on damped Moore-Gibson-Thompson equation, we should consider the following Moore-Gibson-Thompson equation in the ``critical" case:
\begin{equation}\label{Eq.MGTE}
\left\{
\begin{aligned}
&u_{ttt}+u_{tt}-\Delta u-\Delta u_t=0,&&x\in\mb{R}^n,\,\,t>0,\\
&(u,u_t,u_{tt})(0,x)=(u_0,u_1,u_2)(x),&&x\in\mb{R}^n.
\end{aligned}
\right.
\end{equation}
For this ``critical" case, an energy for Moore-Gibson-Thompson equation is conserved (one may see introduction of \cite{PellicerSaiHouari2017}; the same type of results are derived in \cite{KaltenbacherLasieckaMarchand2011,LasieckaWang2016}). We now prove the energy conservation again by using energy method in the Fourier space.
\begin{prop}
Let us consider an energy such that
\begin{align*}
E_{\text{MGT}}(t):=\frac{1}{2}\|(u_{tt}+u_t)(t,\cdot)\|_{L^2}^2+\frac{1}{2}\||D|(u_t+u)(t,\cdot)\|_{L^2}^2.
\end{align*}
Then, this energy is conserved, i.e., $E_{\text{MGT}}(t)\equiv E_{\text{MGT}}(0)$.
\end{prop}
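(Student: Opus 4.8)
The plan is to uncover the conservative structure hidden in the ``critical'' Moore--Gibson--Thompson equation \eqref{Eq.MGTE} by a change of unknown, and then to run the energy method on the Fourier side. First I would set $v:=u+u_t$. Rewriting the left-hand side of \eqref{Eq.MGTE} as $\partial_t(u_{tt}+u_t)-\Delta(u+u_t)$ and observing that $v_t=u_{tt}+u_t$, one sees at once that $v$ solves the free wave equation $v_{tt}-\Delta v=0$ with Cauchy data $v(0,\cdot)=u_0+u_1$ and $v_t(0,\cdot)=u_1+u_2$. Consequently $E_{\mathrm{MGT}}(t)=\tfrac12\|v_t(t,\cdot)\|_{L^2}^2+\tfrac12\||D|v(t,\cdot)\|_{L^2}^2$ is precisely the classical wave energy attached to $v$, and the whole claim reduces to the conservation of that energy.

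Applying the partial Fourier transformation $\mathcal{F}_{x\to\xi}$ to the equation for $v$ gives, for each fixed $\xi\in\mathbb{R}^n$, the harmonic-oscillator ordinary differential equation $\hat{v}_{tt}(t,\xi)+|\xi|^2\hat{v}(t,\xi)=0$. Multiplying it by $\overline{\hat{v}_t(t,\xi)}$ and taking real parts yields the pointwise identity
\begin{align*}
\frac{d}{dt}\left(\frac12|\hat{v}_t(t,\xi)|^2+\frac12|\xi|^2|\hat{v}(t,\xi)|^2\right)=\operatorname{Re}\!\left(\overline{\hat{v}_t(t,\xi)}\,\big(\hat{v}_{tt}(t,\xi)+|\xi|^2\hat{v}(t,\xi)\big)\right)=0,
\end{align*}
so the quantity in parentheses does not depend on $t$ for any $\xi$. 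Integrating this identity in $\xi$ over $\mathbb{R}^n$, interchanging $\tfrac{d}{dt}$ with the integral, and invoking Plancherel's theorem together with $\mathcal{F}(v_t)=\hat{v}_t$, $\mathcal{F}(|D|v)(\xi)=|\xi|\hat{v}(\xi)$, $v_t=u_{tt}+u_t$ and $v=u+u_t$, one obtains $\tfrac{d}{dt}E_{\mathrm{MGT}}(t)=0$, hence $E_{\mathrm{MGT}}(t)\equiv E_{\mathrm{MGT}}(0)$.

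I do not anticipate a real difficulty in this argument; the only point that must be spotted is that the substitution $v=u+u_t$ turns \eqref{Eq.MGTE} into a conservative wave equation, after which the computation is the textbook energy identity. The sole technical nicety is the justification of differentiation under the $\xi$-integral, which is immediate from Plancherel once one knows $v_t\in L^2$ and $|D|v\in L^2$ under the standing regularity hypotheses on $(u_0,u_1,u_2)$; alternatively one may verify the identity first for data with compactly supported Fourier transform and then pass to the limit.
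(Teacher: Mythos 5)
Your proof is correct. It reaches the same conserved quantity as the paper, but by a cleaner structural route: you first observe that $v:=u+u_t$ satisfies the free wave equation $v_{tt}-\Delta v=0$ (since the left-hand side of \eqref{Eq.MGTE} is exactly $\partial_t(u_{tt}+u_t)-\Delta(u+u_t)$), so that $E_{\text{MGT}}$ is nothing but the classical wave energy of $v$, and conservation follows from the textbook identity. The paper instead works directly on the third-order equation in the Fourier space: it multiplies \eqref{Eq.MGTE.F} by $\bar{\hat{u}}_{tt}$ and by $\bar{\hat{u}}_{t}$ separately, obtains two identities whose right-hand sides $\mp\left(|\hat{u}_{tt}|^2-|\xi|^2|\hat{u}_t|^2\right)$ cancel upon addition, and then recognizes the resulting time-independent quantity as $\tfrac12|\hat{u}_{tt}+\hat{u}_t|^2+\tfrac12\left||\xi|(\hat{u}_t+\hat{u})\right|^2$. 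The two computations are algebraically the same — adding the paper's two multipliers is precisely multiplying by $\overline{\hat{v}_t}$ — but your version explains \emph{why} that particular combination of multipliers works, makes the choice of $E_{\text{MGT}}$ look inevitable rather than ad hoc, and as a bonus makes transparent that no decay can be expected in the critical case, since the dynamics of $u+u_t$ is purely conservative. The paper's version has the minor advantage of not requiring one to spot the substitution, and of staying within the energy-method-in-Fourier-space framework used elsewhere in the paper. Your remark on justifying differentiation under the integral (or approximating by data with compactly supported Fourier transform) is a point the paper passes over silently; it is handled correctly in your write-up.
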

\begin{proof}
Employing the partial Fourier transform to \eqref{Eq.MGTE}, we get
\begin{equation}\label{Eq.MGTE.F}
\left\{
\begin{aligned}
&\hat{u}_{ttt}+\hat{u}_{tt}+|\xi|^2 \hat{u}+|\xi|^2 \hat{u}_t=0,&&\xi\in\mb{R}^n,\,\,t>0,\\
&(\hat{u},\hat{u}_t,\hat{u}_{tt})(0,\xi)=(\hat{u}_0,\hat{u}_1,\hat{u}_2)(\xi),&&\xi\in\mb{R}^n.
\end{aligned}
\right.
\end{equation}
Then, let us multiply the equation in \eqref{Eq.MGTE.F} by $\bar{\hat{u}}_{tt}$ and  $\bar{\hat{u}}_{t}$, respectively, to get
\begin{align*}
\frac{\partial}{\partial t}\left(\frac{1}{2}|\hat{u}_{tt}|^2+\frac{1}{2}|\xi|^2|\hat{u}_t|^2+|\xi|^2\hat{u}\bar{\hat{u}}_t\right)&=-|\hat{u}_{tt}|^2+|\xi|^2|\hat{u}_t|^2,\\
\frac{\partial}{\partial t}\left(\frac{1}{2}|\hat{u}_{t}|^2+\frac{1}{2}|\xi|^2|\hat{u}|^2+\hat{u}_{tt}\bar{\hat{u}}_t\right)&=|\hat{u}_{tt}|^2-|\xi|^2|\hat{u}_t|^2.
\end{align*}
The sum of the above derived equations shows
\begin{align*}
\frac{1}{2}\left|\hat{u}_{tt}+\hat{u}_t\right|^2+\frac{1}{2}\left||\xi|(\hat{u}_t+\hat{u})\right|^2=\frac{1}{2}\left|\hat{u}_{2}+\hat{u}_1\right|^2+\frac{1}{2}\left||\xi|(\hat{u}_1+\hat{u}_0)\right|^2.
\end{align*}
Finally, by using the Plancherel theorem, we may complete our proof.
\end{proof}

If we now consider an additional damping $u_t$ in the equation of \eqref{Eq.MGTE}, i.e. the following damped Moore-Gibson-Thompson equation in the acoustics theory:
\begin{equation}\label{Eq.DMGTE}
\left\{
\begin{aligned}
&u_{ttt}+u_{tt}-\Delta u-\Delta u_t+u_t=0,&&x\in\mb{R}^n,\,\,t>0,\\
&(u,u_t,u_{tt})(0,x)=(u_0,u_1,u_2)(x),&&x\in\mb{R}^n,
\end{aligned}
\right.
\end{equation}
 then we may expect that there occurs decay estimates for some energies.

To get qualitative properties of \eqref{Eq.DMGTE}, we only need to consider $\alpha=0$ and $\sigma=1$ in \eqref{Eq.Gen.Ther.Plate.Eq.}, that is
\begin{equation}\label{Eq.a=0}
\left\{
\begin{aligned}
&u_{tt}-\Delta u-v=0,&&x\in\mb{R}^n,\,\,t>0,\\
&v_t+ v+ u_t=0,&&x\in\mb{R}^n,\,\,t>0,\\
&(u,u_t,v)(0,x)=(u_0,u_1,v_0)(x),&&x\in\mb{R}^n.
\end{aligned}
\right.
\end{equation}
By direct calculations, we may transfer \eqref{Eq.a=0} to our desired equation \eqref{Eq.DMGTE}, where third initial data is given by $u_2(x):=v_0(x)+\Delta u_0(x)$.

Now, concerning about the Moore-Gibson-Thompson equation with friction \eqref{Eq.DMGTE}, according to Theorem \ref{Thm.Solution.Esitmate}, the solutions fulfill the estimates
\begin{align*}
\|(u_t,|D|u,u_{tt}-\Delta u)(t,\cdot)\|_{\dot{H}^{s_0}}\lesssim &(1+t)^{-\frac{n+2(s_0+\kappa)}{4}}\|(u_1,|D|u_0,u_2-\Delta u_0)(x)\|_{L^{1,\kappa}}\\
&+(1+t)^{-\frac{\ell}{2}}\|(u_1,|D|u_0,u_2-\Delta u_0)(x)\|_{H^{s_0+\ell}}\\
&+(1+t)^{-\frac{n+2s_0}{4}}\left(|P_{u_1}|+|P_{|D|u_0}|+|P_{u_2-\Delta u_0}|\right),
\end{align*}
where $s_0\geqslant0$, $\ell\geqslant0$ and $\kappa\in[0,1]$. Here, we assume
\begin{align*}
\left(u_1,|D|u_0,u_2-\Delta u_0\right)\in H^s\cap L^{1,\kappa},
\end{align*}
for $0\leqslant s_0+\ell\leqslant s$ and $\kappa\in[0,1]$.
\begin{remark}
The above estimate is of regularity-loss type, due to the fact that we obtain the decay rate $(1+t)^{-\frac{\ell}{2}}$ only by assuming the additional $\ell$ order regularity on initial data.
\end{remark}

Moreover, from Theorem \ref{Thm.Asymptotic.Profile.}, the solution 
\begin{align*}
w=\left(u_t+i|D|u,u_t-i|D|u,u_{tt}-\Delta u\right)^{\mathrm{T}},
\end{align*}
has asymptotic profiles as the solution for
\begin{align*}
\widetilde{w}_t-\widetilde{B}_1\Delta\widetilde{w}+\widetilde{B}_0\widetilde{w}=0,
\end{align*}
where the diagonal matrices $\widetilde{B}_j$ for $j=0,1$ are defined in \eqref{Mat.tB01}, and the solution for
\begin{align*}
\Delta\overline{w}_t-\overline{B}_0(-\Delta)^{3/2}\overline{w}+\overline{B}_1\Delta\overline{w}-\overline{B}_2(-\Delta)^{1/2}\overline{w}-\overline{B}_3\overline{w}=0,
\end{align*}
where the diagonal matrices $\overline{B}_j$ for $j=0,1,2,3$ are defined in \eqref{Mat.oB0123}.
\section{Concluding remarks}\label{Sec.Con.Remark}
\begin{remark}[Qualitative properties in other framework]
Throughout this paper, we investigate estimates of solutions and asymptotic profiles in the $L^2$ norm. To study these qualitative properties of solutions in the $L^q$ norm with $q\geqslant2$, we may apply some results proposed in \cite{XuMoriKawashima2015,Chen2019TEP}. More precisely, let us consider a function in Schwartz space $f\in\ml{S}$. Let $\eta=\eta(|\xi|)$ has the following asymptotic behavior:
\begin{equation*}
\eta(|\xi|)\asymp\left\{
\begin{aligned}
&|\xi|^{\kappa_1}&&\text{as }\xi\in\ml{Z}_{\intt}(\varepsilon),\\
&1&&\text{as }\xi\in\ml{Z}_{\midd}(\varepsilon,N),\\
&|\xi|^{\kappa_2}&&\text{as }\xi\in\ml{Z}_{\extt}(N),
\end{aligned}
\right.
\end{equation*}
where $\kappa_1>0$ and $\kappa_2\in\mb{R}$.\\
In the case when $\kappa_2\geqslant0$, the $L^p-L^q$ estimates hold
\begin{align*}
\left\|\ml{F}^{-1}_{\xi\rightarrow x}\left(|\xi|^se^{-\eta(|\xi|)t}\hat{f}(\xi)\right)\right\|_{L^q}\lesssim(1+t)^{-\frac{s}{\kappa_1}-\frac{n}{\kappa_1}\left(\frac{1}{p}-\frac{1}{q}\right)}\|f\|_{L^p}+e^{-ct}\|\langle D\rangle^{s+\ell}f\|_{L^p},
\end{align*}
where $c>0$, $s\geqslant0$, $1\leqslant p\leqslant 2\leqslant q\leqslant\infty$ and $\ell>n(1/p-1/q)$.\\
In the case when $\kappa_2<0$, the $L^p-L^r-L^q$ estimates hold
\begin{align*}
\left\|\ml{F}^{-1}_{\xi\rightarrow x}\left(|\xi|^se^{-\eta(|\xi|)t}\hat{f}(\xi)\right)\right\|_{L^q}\lesssim(1+t)^{-\frac{s}{\kappa_1}-\frac{n}{\kappa_1}\left(\frac{1}{p}-\frac{1}{q}\right)}\|f\|_{L^p}+(1+t)^{\frac{\ell}{\kappa_2}-\frac{n}{\kappa_2}\left(\frac{1}{r}-\frac{1}{q}\right)}\|\langle D\rangle^{s+\ell}f\|_{L^r},
\end{align*}
where $s\geqslant0$, $1\leqslant p,r\leqslant 2\leqslant q\leqslant\infty$ and $\ell>n(1/p-1/q)$.\\
In other words, for the case when $\kappa_2\geqslant0$, we may observe exponential decay for large frequencies if as assume suitable regularity for initial data. However, for the case when $\kappa_2<0$, we only can obtain polynomial decay for large frequencies, even we assume suitable regularity for initial data.
\end{remark}

\begin{remark}[Asymptotic profiles for general $\alpha-\beta$ system] Throughout this paper, by introducing threshold $\alpha=1/2$ and $\alpha=1/3$, we investigate decay properties and asymptotic profiles of solutions to generalized thermoelastic plate equations (or $\alpha-\beta$ system with $\alpha=\beta$). However, it is still open about asymptotic profiles of solutions for the following general $\alpha-\beta$ system:
\begin{equation}\label{Eq.Alpha-Betaz}
\left\{
\begin{aligned}
&u_{tt}+\ml{A}u-\gamma_1\ml{A}^{\alpha}v=0,&&x\in\mb{R}^n,\,\,t>0,\\
&v_t+\gamma_2\ml{A}^{\beta}v+\gamma_1\ml{A}^{\alpha}u_t=0,&&x\in\mb{R}^n,\,\,t>0,\\
&(u,u_t,v)(0,x)=(u_0,u_1,v_0)(x),&&x\in\mb{R}^n,
\end{aligned}
\right.
\end{equation}
where $(\alpha,\beta)\in[0,1]\times[0,1]$ with $\alpha\neq\beta$, $\gamma_1\in\mb{R}\backslash\{0\}$, $\gamma_2\in\mb{R}_+$, and $\ml{A}=(-\Delta)^{\sigma}$ with $\sigma\in[1,\infty)$. We think it is also possible to study asymptotic profiles for \eqref{Eq.Alpha-Betaz}. To derive asymptotic profiles of solutions, we should derive representations of solutions by using asymptotic expansions of eigenvalues/eigenprojections (e.g. \cite{IdeHaramotoKawashima,Ueda2018,Chen2019KV}) or multistep diagonalization procedure (e.g. the method used in Section \ref{Subsec.Diag.TPE}). By using these methods, one should carefully divide the discussion into some cases to analysis the influence from the values of $\alpha,\beta,\sigma$. However, it is not a simple generalization of the known results, and sometimes we need to introduce the constraint condition on initial data (see \cite{Ueda2018} for the case when $\sigma=1$, $\alpha=0$, $\beta=1$ in \eqref{Eq.Alpha-Betaz}).
\end{remark}

\section*{Acknowledgments}
 Yan Liu is supported by the Foundation for natural Science  in Higher Education of Guangdong, China (Grant No. 2018KZDXM048), and the General Project of Science Research of Guangzhou (Grant \# 201707010126). The PhD study of Wenhui Chen is supported by S\"achsiches Landesgraduiertenstipendium.

\bibliographystyle{elsarticle-num}

\end{document}